\newcommand{\bbz}{\mathbb{Z}}
\newcommand{\abs}[1]{\left\lvert #1\right\rvert}
\newcommand{\brac}[1]{\left( #1\right)}
\newcommand{\norm}[1]{\left\lVert #1\right\rVert}
\newcommand{\Mod}[1]{\ (\mathrm{mod}\ #1)}
\newcommand{\subsum}[1]{\sum_{\substack{#1}}}
\newcommand{\ogcd}[2]{\gcd(#1,#2)}
\newcommand{\bracgcd}[2]{\gcd\brac{#1,#2}}
\newtheorem{theorem}{Theorem}
\newtheorem{lemma}{Lemma}
\newtheorem{conjecture}{Conjecture}
\theoremstyle{definition}
\newtheorem{definition}{Definition}
\title{Odd moments and adding fractions}
\author{Thomas F. Bloom and Vivian Kuperberg}
\begin{document}

 \begin{abstract}
We prove near-optimal upper bounds for the odd moments of the distribution of coprime residues in short intervals, confirming a conjecture of Montgomery and Vaughan. As an application, we prove near-optimal upper bounds for the average of the refined singular series in the Hardy-Littlewood conjectures concerning the number of prime $k$-tuples for $k$ odd. The main new ingredient is a near-optimal upper bound for the number of solutions to $\sum_{1\leq i\leq k}\tfrac{a_i}{q_i}\in \mathbb{Z}$ when $k$ is odd, with $\ogcd{a_i}{q_i}=1$ and restrictions on the size of the numerators and denominators, which is of independent interest.
 \end{abstract}
\maketitle

\section{Introduction}
In this paper, we prove two new results related to odd moments in number theory, which have received much less attention than their even counterparts. Both of our upper bounds are within logarithmic factors of the conjectured truth. Our main new tool is a new, essentially best possible, upper bound for the number of solutions to a rational linear equation with an odd number of summands.

\subsection{Distribution of reduced residues}
Let $q,h\geq 1$ and, for any integer $k\geq 1$, let
\[M_k(q,h) = \sum_{1\leq n\leq q}\brac{\sum_{\substack{n\leq m < n+h\\ \ogcd{m}{q}=1}}1-\frac{\phi(q)}{q}h}^k.\]
That is, if we choose an interval of length $h$ uniformly at random, then $M_k(q,h)$ is the $k$th central moment of the proportion of the interval which is coprime to $q$ (which has expected value $\tfrac{\phi(q)}{q}h$).

Montgomery and Vaughan \cite{MV86} showed that for any fixed integer $k\geq 1$\footnote{In this paper we use the Vinogradov $f\ll g$ notation to mean $f=O(g)$, that is, there exists some absolute constant $C$ such that $\lvert f\rvert \leq Cg$. All implied constants depend (only, yet substantially) on $k$.}
\[M_k(q,h)\ll \brac{\frac{\phi(q)}{q}h+\brac{\frac{\phi(q)}{q}h}^{\frac{k}{2}}}q.\]
This is sharp for even $k$, but not for odd $k$. For odd $k$ Montgomery and Vaughan proved the stronger bound (see \cite[equation (18)]{MV86})\footnote{In fact the second summand here always dominates the first, but we write it in this form for easier comparison to the other bounds discussed.}
\[M_k(q,h)\ll \brac{\frac{\phi(q)}{q}h+\brac{\frac{\phi(q)}{q}h}^{\frac{k}{2}-\frac{1}{7k}}\brac{\frac{q}{\phi(q)}}^{2^k-\frac{1}{7k}}}q ,\]
but speculated that a much stronger bound should be true, with the exponent in the second summand replaced by $\frac{k-1}{2}$ (when $k$ is odd). In this paper we (almost) prove this conjecture, losing only a power of $\log h$.
\begin{theorem}\label{thm:montgomery-vaughan-odd}
Let $q \ge 1$ and $h\ge 2$. For any odd $k\geq 3$
\[M_k(q,h)\ll (\log h)^{O(1)}\brac{\frac{\phi(q)}{q}h+\brac{\frac{\phi(q)}{q}h}^{\frac{k-1}{2}}}q.\]
\end{theorem}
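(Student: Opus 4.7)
The natural strategy is a Fourier-analytic reduction to a counting problem about rational linear equations, where the odd-$k$ counting bound advertised in the abstract provides the crucial savings. To begin, I would use the Möbius identity $\ind{\ogcd{m}{q}=1}=\sum_{d\mid q,\,d\mid m}\mu(d)$ to rewrite the inner sum as
\[\sum_{\substack{n\leq m<n+h\\\ogcd{m}{q}=1}}1-\frac{\phi(q)}{q}h=\sum_{d\mid q}\mu(d)\Delta_d(n,h),\]
where $\Delta_d(n,h)=\#\{j:n\leq jd<n+h\}-h/d$ is the standard discrepancy. Expanding $\Delta_d$ as a finite Fourier series on $\bbz/d\bbz$ yields $\Delta_d(n,h)=\sum_{1\leq a<d}c(a,d,h)\,e(an/d)$ with weights $\lvert c(a,d,h)\rvert\ll\min(h/d,1/a)$ coming from the classical Fourier series of the sawtooth.

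Raising to the $k$-th power, summing over $1\leq n\leq q$, and invoking orthogonality (the $n$-sum collapses to $q$ precisely when $\sum_i a_i/d_i\in\bbz$, since every $d_i$ divides $q$), the $k$-th moment becomes schematically
\[M_k(q,h)\ll q\subsum{d_1,\ldots,d_k\mid q\\1\leq a_i<d_i\\\sum_i a_i/d_i\in\bbz}\prod_{i=1}^k\mu(d_i)\,c(a_i,d_i,h).\]
Reducing each fraction $a_i/d_i$ to lowest terms $a_i'/q_i'$ and performing a dyadic decomposition over $a_i'\sim A_i$ and $q_i'\sim Q_i$, the problem reduces, at the cost of a $(\log h)^{O(1)}$ factor, to bounding the number of solutions of $\sum_i a_i'/q_i'\in\bbz$ with $\ogcd{a_i'}{q_i'}=1$ inside each dyadic box.

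This is exactly what the new counting theorem from the abstract supplies when $k$ is odd: a bound that beats, by a factor corresponding to one missing ``pair'', the trivial pairing estimate that is only available for even $k$. Substituting it into the weighted moment estimate and carrying out the dyadic summation produces the conjectured main term $\brac{\phi(q)h/q}^{(k-1)/2}q$ in place of the $\brac{\phi(q)h/q}^{k/2}q$ that results from Cauchy--Schwarz pairing alone. The remaining linear term $\brac{\phi(q)h/q}q$ arises, as usual, from degenerate configurations in which the rational linear relation is trivially satisfied (so no nontrivial cancellation on the $n$-sum is required).

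The principal obstacle is of course the odd-$k$ counting bound itself, which lies beyond the reach of the orthogonality arguments that handle the even case and is the central new ingredient of the paper. A secondary but nontrivial technical difficulty is the dyadic bookkeeping: one must ensure that the logarithmic losses from reducing fractions to lowest terms, from truncating the Fourier expansion, from handling separately the regime $d_i\gg h$ (where $c(a_i,d_i,h)$ enjoys additional decay from the $h/d$ term), and from summing over $k$ independent dyadic scales all aggregate into a single $(\log h)^{O(1)}$ prefactor rather than contributing a small power of $h$.
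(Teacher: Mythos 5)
Your reduction is correct as far as it goes: the paper does indeed rewrite $M_k(q,h)$ as $q(\phi(q)/q)^kV_k(q,h)$, where $V_k$ is a weighted sum over divisor tuples $q_1,\ldots,q_k\mid q$, $a_i/q_i$ in lowest terms, with weights $\mu(q_i)/\phi(q_i)$ and $E(a_i/q_i)$ behaving like $F(x)=\min(h,\|x\|^{-1})$; and the paper does apply the odd-$k$ counting theorem (Theorem~\ref{th-oddsum}) to the resulting boxes after a dyadic decomposition in the spirit you describe (see Lemma~\ref{lem:bounds-on-s-i-sums-with-alpha-shifts}). However, the step you relegate to ``secondary bookkeeping'' is actually where a substantial part of the work lies, and your proposal as written cannot close it. The counting theorem charges a factor $(\log(Q_1\cdots Q_k))^{O(1)}$ where the $Q_i$ are the sizes of the denominators $q_i\mid q$. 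Since $q$ can be (and in the application to Theorem~\ref{thm:conjecture-for-Rkh-for-k-odd} \emph{is}) of size roughly $e^{h^{k+1}}$, this gives a loss of $(\log q)^{O(1)}\approx h^{O(1)}$, not $(\log h)^{O(1)}$, and the result claimed is false at that strength. The dyadic summation over $q_i\leq q$ cannot rescue this on its own.

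The paper circumvents this with two structural devices, neither of which is hinted at in your proposal. First, $q$ is split as $q=q_1q_2$ into a $y$-smooth part and a $y$-rough part with $y=h^k$, and $M_k$ is expanded via a binomial identity into a double sum in $D_1(n_1)$ and $D_2(n_1,n_2)$. The rough component is handled \emph{not} by the counting theorem at all, but by a separate combinatorial lemma inherited from Montgomery--Vaughan (Lemma~\ref{lem-mv2}), which exploits the fact that $q_2$ has only primes $>y$. Second, even for the smooth modulus $q_1$, each divisor $q_i\mid q_1$ is further factored as $q_i=s_ir_i$ with coprime parts, where $s_i\leq y^{2k}$ is chosen greedily; the counting theorem is applied only to the small factors $s_i$ (giving $(\log y)^{O(1)}=(\log h)^{O(1)}$), while the large factors $r_i$ contribute only through the multiplicative estimate of Lemma~\ref{lem:bounds-on-r-i-sums-no-Fs}, which yields a harmless $\prod_{p\mid q}(1+2^k/(p-1))\ll(\log y)^{2^k}$. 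Without both of these devices, the argument as sketched produces a bound with an unacceptable loss, so the claim that ``carrying out the dyadic summation produces the conjectured main term'' is a genuine gap, not a routine verification. (Your attribution of the linear term $(\phi(q)/q)hq$ to ``degenerate configurations'' is also not quite where it arises in the paper: it is generated by the error term in the rough-part lemma, specifically the $hP_2$ contribution in inequality~\eqref{eq-dbound}.)
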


In fact, the method of Montgomery and Vaughan allows for an asymptotic formula for $M_k$ when $k$ is even (this was sketched in \cite{MV86} and proved in detail by Montgomery and Soundararajan \cite{MS04}). We are unable to prove an asymptotic formula for $M_k$ when $k$ is odd, although we believe that one should be possible. It is not clear even what the leading constant of the main term of such an asymptotic should be (unlike in the even case when there is a clearly dominant `diagonal term'). Nonetheless, we believe (and computational experimentation suggests) that there is a main term of order $(\frac{\phi(q)}{q}h+(\tfrac{\phi(q)}{q}h)^{\frac{k-1}{2}})q$, and hence we believe that Theorem~\ref{thm:montgomery-vaughan-odd} is essentially the best possible, in that it is only improvable by a factor of $(\log h)^{O(1)}$. Recent work of La Bret\`eche \cite{dlB-3rd-moment} provides an asymptotic estimation of $R_3(h)$ (see Section \ref{subsec:introduction:distribution-of-prime-tuples}) which is consistent with this analysis and suggests that an asymptotic may be within reach, whereas in recent work Gorodetsky \cite[Conjecture 1]{Gor} argues that odd moments of reduced residues (or primes) in short intervals can be modeled by odd moments of a centered Poisson random variable, which predicts an asymptotic for $M_k(q,h)$ of the shape $(\tfrac{\phi(q)}{q}h+c_k(\tfrac{\phi(q)}{q}h)^{\frac{k-1}{2}})q$, where $c_k$ is an explicit constant related to the Poisson distribution. 

\subsection{Distribution of prime $k$-tuples}\label{subsec:introduction:distribution-of-prime-tuples}
For any $k$-tuple $d_1,\ldots,d_k$ of distinct integers one can ask about the count of $n\leq N$ such that $n+d_i$ are prime for all $1\leq i\leq k$. Hardy and Littlewood \cite{HL} gave a convincing heuristic that produces a precise conjecture for the asymptotics of such a count. As usual when dealing with primes, the statements are simplified if we consider a weighted count instead, replacing $1_{\mathrm{prime}}$ by the smoother von Mangoldt function $\Lambda$, defined by $\Lambda(n)=\log p$ if $n$ is a power of a prime $p$ and $0$ otherwise. In this form, the Hardy-Littlewood conjecture states that
\[\frac 1N \sum_{n\leq N}\prod_{i=1}^k \Lambda(n+d_i)= \mathfrak{S}(d_1,\ldots,d_k)+o(1)\]
(for any fixed $k$-tuple $(d_1,\ldots,d_k)$ of distinct integers, as $N\to \infty$), where
\[ \mathfrak{S}(d_1,\ldots,d_k) = \prod_{p}\brac{1-\frac{1}{p}}^{-k}\brac{1-\frac{\nu_p(d_1,\ldots,d_k)}{p}}\]
and for any prime $p$ we write $\nu_p(d_1,\ldots,d_k)$ for the number of residue classes modulo $p$ occupied by the $d_1,\ldots,d_k$.  Gallagher \cite{Ga} proved the asymptotic that for any fixed $k\geq 1$,
\[\sum_{\substack{1\leq d_1,\ldots,d_k\leq h\\ d_i\textrm{ distinct}}}\mathfrak{S}(d_1,\ldots,d_k)\sim h^k.\]
(That is, on average $\mathfrak{S}$ tends to $1$ as $h\to \infty$.) 

This so-called `singular series' $\mathfrak{S}$ is an object of great interest in the study of primes -- aside from the obvious interest in counting prime tuples, the study of $\mathfrak{S}$ is also important in understanding the distribution of primes in short intervals (see for example \cite{MS04}) and in predicting the congruence classes of consecutive primes (see for example \cite{LOS16}).

Since, by the prime number theorem, $\frac{1}{N}\sum_{n\leq N}\Lambda(n)\sim 1$, for more precise information one should subtract the `main term' $1$ from each $\Lambda$. The corresponding refined form of the Hardy-Littlewood conjecture then states that
\[\frac 1N \sum_{n\leq N}\prod_{i=1}^k (\Lambda(n+d_i)-1)=\mathfrak{S}_0(d_1,\ldots,d_k) + o(1)\]
where for $\mathcal D = (d_1, \ldots, d_k),$
\begin{equation*}
\mathfrak{S}_0(\mathcal D) = \sum_{X\subseteq \{1,\ldots,k\}}(-1)^{k-\abs{X}}\mathfrak{S}(\mathcal D_X).
\end{equation*}
Here we use the notational convention that $\mathcal D_X$ denotes the tuple consisting only of those elements of $\mathcal D$ with indices in $X$.
Using $\mathfrak{S}(\mathcal D)=\sum_{X\subseteq \mathcal D} \mathfrak{S}_0(\mathcal D_X)$ one can recover information about $\mathfrak{S}$ from asymptotics for $\mathfrak{S}_0$, but the latter yields more information, and the refined singular series $\mathfrak{S}_0$ is more difficult to study. (In particular note that, unlike $\mathfrak{S}$, the refined series $\mathfrak{S}_0$ can take negative values.)

Montgomery and Soundararajan \cite{MS04} considered the sum
\[R_k(h) = \sum_{\substack{1\leq d_1,\ldots,d_k\leq h\\ d_i\textrm{ distinct}}}\mathfrak{S}_0(d_1,\ldots,d_k).\]
When $k$ is even 
Montgomery and Soundararajan \cite{MS04} proved an asymptotic for $R_k(h)$ of the shape
\[R_k(h)\sim \mu_k (-h \log h)^{k/2}\]
for some explicit constant $\mu_k>0$, which implies Gallagher's estimate for the average of $\mathfrak{S}$. When $k$ is odd they proved, for any $\epsilon>0$,
\[R_k(h) \ll_{\epsilon} h^{\frac{k}{2}-\frac{1}{7k}+\epsilon}.\]
It has been conjectured, by Lemke Oliver and Soundararajan in \cite{LOS16} and by the second author in \cite{K21}, that when $k\geq 3$ is an odd integer and $h>1$ we have
\[R_k(h) \asymp (\log h)^{\frac{k+1}{2}}h^{\frac{k-1}{2}}.\]
In \cite{K21} the second author obtained bounds of comparable quality (except for the power of $\log h$) when $k=5$ in the analogous function field setting. We prove bounds of the conjectured quality for the integers, for all odd $k\geq 3$.
\begin{theorem}\label{thm:conjecture-for-Rkh-for-k-odd}
If $k\geq 3$ is an odd integer and $h \ge 2$ then
\[R_k(h)\ll (\log h)^{O(1)} h^{\frac{k-1}{2}}.\]
\end{theorem}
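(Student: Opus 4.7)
The plan is to follow the Fourier-analytic framework of Montgomery--Soundararajan~\cite{MS04} and the second author~\cite{K21}, using the main new counting theorem for rational linear equations with an odd number of summands in place of the estimates that were previously unable to handle the odd case.

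First, I would begin with the Ramanujan-style expansion of the refined singular series. By expanding each local Euler factor and reorganising, one obtains an identity of the shape
\[\mathfrak{S}_0(d_1,\ldots,d_k)=\sum_{q_1,\ldots,q_k\geq 2}\prod_{i=1}^{k}\frac{\mu(q_i)}{\phi(q_i)}\subsum{a_i\,(\mathrm{mod}\,q_i)\\\ogcd{a_i}{q_i}=1\\ \sum_{i=1}^k a_i/q_i\in\bbz}\prod_{i=1}^{k}e\brac{\frac{a_id_i}{q_i}},\]
(up to signs and lower-order correction terms; this is essentially the expansion derived in \cite{MS04,K21}). The restriction $q_i\geq 2$ distinguishes $\mathfrak{S}_0$ from $\mathfrak{S}$ and comes from subtracting the mean value $1$ from each $\Lambda(n+d_i)$, while the constraint $\sum_i a_i/q_i\in\bbz$ is precisely the arithmetic condition whose solution count is bounded by the main new ingredient of the paper.

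Next, substituting this into the definition of $R_k(h)$ and exchanging the order of summation, the $d_i$-sums decouple. The distinctness of the $d_i$ costs only lower-order terms, which reduce to estimates for $R_{k'}(h)$ with $k'<k$ and may be handled inductively. Thus up to admissible error
\[R_k(h)\ll\sum_{q_1,\ldots,q_k\geq 2}\prod_i\frac{1}{\phi(q_i)}\abs{\subsum{a_i\textrm{ as above}}\prod_i F\brac{\frac{a_i}{q_i}}},\]
where $F(\alpha)=\sum_{d\leq h}e(\alpha d)$ satisfies the standard estimate $\abs{F(a/q)}\ll \min(h,q/\abs{a})$ for $0<\abs{a}\leq q/2$.

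Finally, I would decompose this sum dyadically into ranges $q_i\sim Q_i$ and $\abs{a_i}\sim A_i$ (with $A_i\leq Q_i/2$). On each block the exponential-sum weight is bounded by $\prod_i\min(h,Q_i/A_i)$, and the remaining combinatorial sum is controlled by the number of tuples $(a_i,q_i)$ satisfying $\ogcd{a_i}{q_i}=1$, $q_i\sim Q_i$, $\abs{a_i}\sim A_i$, and $\sum_i a_i/q_i\in\bbz$. Since $k$ is odd, the main new counting theorem gives a near-optimal bound on this count, which is precisely the saving needed to convert the even-$k$ exponent $h^{k/2}$ into the odd-$k$ exponent $h^{(k-1)/2}$; summing over all dyadic ranges produces the claimed bound, with the $(\log h)^{O_k(1)}$ loss absorbed from the dyadic summation. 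The hard part is the counting estimate itself, which is substantially more delicate for odd $k$ than for even $k$: when $k$ is even one can pair summands as $a_i/q_i+(-a_i)/q_i$ and obtain a diagonal family of $\asymp (AQ)^{k/2}$ solutions that dominates the count, whereas for odd $k$ no such cheap family exists and one must genuinely control generic solutions. Once this counting bound is in hand, the reduction above is a careful bookkeeping argument closely modelled on the even-$k$ treatment in \cite{MS04} and the $k=3$ case in \cite{K21}.
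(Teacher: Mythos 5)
Your overall strategy — expand $\mathfrak S_0$, decouple the $d_i$ sums, decompose dyadically, and invoke the odd-tuples counting theorem — is the right starting point, but the proposal skips over two points that are genuinely load-bearing in the argument, and the second of your intermediate steps is not correct as stated.

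First, the claim that removing distinctness ``costs only lower-order terms, which reduce to estimates for $R_{k'}(h)$ with $k'<k$'' does not hold. When two of the $d_i$ coincide the \emph{global} singular series $\mathfrak S(\mathcal D)$ diverges (for large $p$ one has $\nu_p(\mathcal D) \le k-1$, so the local factor is $1 + 1/p + O(1/p^2)$ and the Euler product blows up), so your Ramanujan-style identity cannot simply be substituted into the non-distinct sum. The paper has to work instead with the truncated series $\mathfrak S_0(\mathcal D;q)$ with $q=\prod_{p\le h^{k+1}}p$, where repetitions produce factors of $q/\phi(q)\asymp\log h$ (Lemma~\ref{lem-S-relative-to-q-repeated-elements}) rather than divergences. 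Unwinding the distinctness constraint then requires an inclusion--exclusion over set partitions of $\{1,\dots,k\}$, tracked via the polynomials $P_\ell(z)$ and the coefficients $f_{\mathcal R,\mathcal P}$ (Lemma~\ref{lem:partitions-to-polynomial-coefficients-S}), and the resulting terms are expressed as $h^{M-|\mathcal R|}V_{|\mathcal R|}(q,h)$ --- not as $R_{k'}(h)$ for smaller $k'$. Getting the exponent $(k-1)/2$ requires the delicate combinatorial fact that $\lfloor(k+N_1)/2\rfloor-\lceil N_1/2\rceil=(k-1)/2$ for odd $k$, where $N_1$ counts singleton parts; this is precisely where the parity of $k$ enters, and it is not a soft inductive reduction.

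Second, even setting aside distinctness, applying the counting theorem directly to your expansion would produce factors $(\log Q)^{O(1)}$ where $Q$ is the size of the largest $q_i$ in play. After truncating to $q_i\mid q$ with $q=\prod_{p\le y}p$ the moduli range up to roughly $e^{y}$, so a naive invocation of Theorem~\ref{th-simpfrac} gives $(\log Q)^{O(1)}\approx y^{O(1)}=h^{O(k)}$, which is catastrophic. The paper avoids this by proving Theorem~\ref{thm:montgomery-vaughan-odd} first and routing the argument through $V_j(q,h)$; inside that proof each $q_i$ is factored as $s_ir_i$ with $s_i\le y^{2k}$ and the rough part $r_i$ handled combinatorially (Lemmas~\ref{lem:bounds-on-s-i-sums-with-alpha-shifts}, \ref{lem:bounds-on-r-i-sums-no-Fs}, \ref{thm:reduced-residues-squarefree-friable-modulus}, and the Montgomery--Vaughan rough component argument). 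A direct proof of $R_k(h)$ from the counting theorem would need to reproduce this smooth/rough separation; your dyadic decomposition as stated does not address it, and is the second place where the argument would fail.
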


As with Theorem~\ref{thm:montgomery-vaughan-odd}, and in keeping with the previous conjecture for the growth of $R_k(h)$, we believe (but cannot prove) that this upper bound is essentially the best possible, up to the exponent of $\log h$.

Recently, La Bret\`eche \cite{dlB-3rd-moment} has given an asymptotic for $k=3$, showing that $R_3(h) \sim \frac 92 h (\log h)^2$. Moreover, La Bret\`{e}che gives a precise conjecture that, for all integers $\ell\geq 1$,
\[R_{2\ell+1}(h)\sim (-1)^{\ell-1}(2\ell+1)\frac{3(2\ell)!}{2^{\ell+1}(\ell-1)!}(\log h)^{\ell+1}h^{\ell}.\]

In Section~\ref{subsec-rkformulae} we calculate exact expressions for the expected main terms of $R_k(h)$ in terms of $M_j(q,h)$ (via an auxiliary expression $V_j(q,h)$). As mentioned above we have no good heuristics for the asymptotics for $M_j(q,h)$, so these expressions are currently of limited value. The asymptotics for $M_j(q,h)$ are, heuristically, closely related to the asymptotics for the number of primes in short intervals. The three problems of estimating odd moments for primes in short intervals, $M_k(q,h)$ for odd $k$, and $R_k(h)$ for odd $k$, are strongly linked; see \cite{dlB-3rd-moment} for further information.

\subsection{Adding fractions}
The main new idea is an improved upper bound for the number of solutions to a rational linear equation with an odd number of summands, of independent interest, which we expect will have other applications. The bound we prove is fairly technical, but the following simpler consequence is representative.

\begin{theorem}\label{th-simpfrac}
Let $Q \ge 2$ and $1\leq n\leq Q$ be integers, and let $k\geq 3$ be an odd integer. The number of solutions to 
\[\frac{a_1}{q_1}+\cdots+\frac{a_{k}}{q_{k}}\in \bbz\quad\textrm{ where }\quad\ogcd{a_i}{q_i}=1\textrm{ for }1\leq i\leq k,\] 
with $\abs{a_i}\leq n$ and $q_i\leq Q$ for $1\leq i\leq k$, is
\[\ll (\log Q)^{O(1)} n^{\frac{k+1}{2}}Q^{\frac{k-1}{2}}.\]
\end{theorem}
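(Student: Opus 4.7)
The plan is to prove Theorem~\ref{th-simpfrac} by induction on the odd integer $k$, with the base case $k=3$ following from the counting result established in \cite{K21} by the second author. As a preliminary step, a dyadic decomposition would allow us to restrict to $|a_i| \asymp n_i$ and $q_i \asymp Q_i$ for fixed dyadic scales, at the cost of a factor $(\log Q)^{O(1)}$.

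For the inductive step from odd $k-2$ to odd $k$, write $k = 2\ell+1$ and split the $k$ summands into $\ell$ pairs plus one unpaired term $a_k/q_k$. Each pair $a_{2j-1}/q_{2j-1} + a_{2j}/q_{2j}$ can be combined into a single fraction $c_j/s_j$ in lowest terms, where $s_j \mid q_{2j-1}q_{2j}$. A standard argument using Bezout and the coprimality $\ogcd{a_i}{q_i}=1$ shows that the number of pairs producing any specific $(c_j, s_j)$ is controlled by a divisor function in $s_j$. This would reduce the problem to counting integer-valued sums of the form $\sum_{j=1}^{\ell} c_j/s_j + a_k/q_k$ under refined range constraints on the $(c_j, s_j)$.

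The key gain in the odd-case bound (the extra factor of $Q$ saved compared to the even-case diagonal bound $n^{k/2}Q^{k/2}$) would come from the following observation about the reduced equation: the denominator of $\sum_j c_j/s_j$ in lowest terms must divide $q_k$, since otherwise the sum cannot possibly cancel $-a_k/q_k$ modulo $1$. This is a strong compatibility requirement on the denominators $s_j$, which in turn forces the original $q_i$ (for $i < k$) to be essentially divisors of $q_k$ up to a divisor-function error. Summing over dyadic scales and divisors with this compatibility in place would give the claimed bound $n^{(k+1)/2}Q^{(k-1)/2}$ up to $(\log Q)^{O_k(1)}$.

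The main obstacle is that this naive induction does not close as stated: after pairing, the merged fractions $c_j/s_j$ have $|c_j|$ possibly as large as $2ns_j$ and $s_j$ up to $Q^2$, so the reduced equation lives in a different parameter regime than the original and the inductive hypothesis cannot be fed back directly. I would therefore formulate (as the excerpt indicates is actually done) a more technical inductive statement that keeps track of individual numerator and denominator size ranges and of their pairwise gcd structure, and then verify that this stronger hypothesis propagates through the reduction. Handling the resulting multi-scale bookkeeping---in particular, controlling the nested divisor sums that arise when keeping track of $\mathrm{lcm}(s_1,\ldots,s_\ell)$ relative to $q_k$ across many denominator scales---is where I expect the bulk of the technical work to lie.
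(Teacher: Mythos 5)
Your proposal takes a genuinely different route from the paper, but I do not think it works as stated, and the differences matter.

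The paper does not induct on $k$ or pair up fractions. It proves a stronger, size-refined statement (Theorem~\ref{th-oddsum}) directly, for all odd $k$ at once, by decomposing \emph{all} the denominators $q_1,\ldots,q_{2k+1}$ simultaneously into relative greatest common divisors $g_I$ (indexed by subsets $I\subseteq\{1,\ldots,2k+1\}$), choosing a minimising partition $X\sqcup Y$ of the index set, and then fixing variables in four careful stages. The saving of $(n/Q)^{1/2}$ over the Cauchy--Schwarz bound comes from the cascading congruence constraints that the equation imposes on the $g_I$ with $|I\cap X|=1$, not from a divisibility statement between a merged denominator and $q_k$.

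The central step in your sketch is the claim that, after merging each pair $a_{2j-1}/q_{2j-1}+a_{2j}/q_{2j}$ into a reduced fraction $c_j/s_j$, ``the number of pairs producing any specific $(c_j,s_j)$ is controlled by a divisor function in $s_j$.'' This is false. Take $c_j/s_j=0$: the preimage is exactly the set of pairs $(a/q,-a/q)$ with $q\le Q$, $\lvert a\rvert\le n$, $\gcd(a,q)=1$, which has size $\asymp nQ$, not $\tau(s_j)^{O(1)}$. More generally the fibre over a fixed reduced fraction has size that grows polynomially in $n$ and $Q$, and this is precisely the ``diagonal'' contribution that drives the even-$k$ count; any correct argument must carry this multiplicity along rather than absorb it into a divisor bound.

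A second difficulty is the observation you flag as the ``key gain'': that the denominator of $\sum_j c_j/s_j$ in lowest terms equals $q_k$. This is correct, and it does impose $q_k\mid\mathrm{lcm}(s_1,\ldots,s_\ell)$, but it does \emph{not} force the $q_i$ for $i<k$ to be essentially divisors of $q_k$ (already for $k=3$ the constraint is only $q_3\mid\mathrm{lcm}(q_1,q_2)$, which allows $q_1$ and $q_2$ to have large parts coprime to $q_3$). Converting ``$q_k$ divides an lcm of merged denominators, each of which is itself a reduced combination of two of the original $q_i$'' into a usable count is exactly the hard part, and it is where the paper's relative-gcd parameterisation does real work: it isolates, for each $i\in X$, a modulus $\prod_{J\ni i,\,J\subseteq\{1,\ldots,i\}}g_J$ against which a single new unknown $h_i$ is constrained, with the relevant coprimality built in. Nothing in your sketch plays that role.

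You correctly identify that the induction on $k$ does not close because the merged fractions $c_j/s_j$ escape the original parameter ranges, and you propose to strengthen the hypothesis to track sizes and gcd structure. That is in the right spirit --- the paper's Theorem~\ref{th-oddsum} is indeed a size-refined strengthening --- but the paper does not then run an induction on $k$; it proves the refined statement directly. Without a concrete formulation of the strengthened hypothesis and a demonstration that the pairing step preserves it (in particular that the fibre-size issue above is handled), the argument has a genuine gap. One smaller point: the $k=3$ base case is due to Blomer and Br\"{u}dern \cite{BB}; \cite{K21} states the conjecture (Conjecture~6.2) rather than proving it.
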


A bound of the shape of Theorem~\ref{th-simpfrac} was conjectured by the second author in \cite[Conjecture 6.2]{K21}. Similar bounds have been previously established for some specific ranges of $n$, $Q$, and $k$:
\begin{itemize}
    \item Blomer and Br\"{u}dern \cite[Lemma 3]{BB} prove Theorem~\ref{th-simpfrac} when $k=3$, establishing an upper bound of $\ll (\log Q)^{O(1)}n^2Q$. (The result of \cite{BB} has slightly different coprimality conditions, but their proof easily adapts to the situation in Theorem~\ref{th-simpfrac}.)
    \item Blomer, Br\"{u}dern, and Salberger \cite{BBS} prove an asymptotic formula for the number of such solutions when $n=Q$ and $k=3$, under the weaker coprimality assumption\footnote{In other words, they study solutions to this equation in $\mathbb{P}^5$ rather than $\mathbb{P}^1\times \mathbb{P}^1\times \mathbb{P}^1$, as our coprimality conditions assume. This difference should only affect the power of $\log Q$ in the count of solutions, and hence is essentially immaterial at our present level of precision.} that there is no common divisor of $(a_1,a_2,a_3,q_1,q_2,q_3)$. The same authors \cite{BBS18} also prove an asymptotic formula when $n=Q$ and $k=3$ under the coprimality assumption that there is no common divisor of $(a_1,a_2,a_3)$ and no common divisor of $(q_1,q_2,q_3)$. 
    \item An asymptotic formula when $n = Q$ and $k = 3$ under the same coprimality assumption that we consider, namely that $(a_i,q_i) = 1$ for all $i$, was proven by Bettin and Destagnol \cite{BD}.
    \item An asymptotic formula for general $k$, again in the special case when $n=Q$ and under the weaker coprimality condition that $(a_1,\ldots,q_k)$ have no common divisor, was established by La Bret\`eche \cite{dlB} and Destagnol \cite{De}.
    \item Shparlinski \cite{Shpar} proves a version of Theorem~\ref{th-simpfrac}, for arbitrary $k$, under the condition that $n = Q$; for his result, the constraint that $\sum_i a_i/q_i \in \mathbb Z$ is weighted, so that for fixed $w_1, \ldots, w_k$, one has $\sum_i w_i a_i/q_i \in \mathbb Z$. This result was improved by La Bret\`eche \cite{dlB} who improved the power of log.
    \item When $k$ is even, the analogue of Theorem~\ref{th-simpfrac} is well-known, with the upper bound for the number of solutions being $\ll (\log Q)^{O(1)}n^{\frac{k}{2}}Q^{\frac{k}{2}}$, which the trivial diagonal contribution shows is best possible (up to the factor of $\log Q$). A bound of this shape is essentially proved by Montgomery and Vaughan \cite{MV86}, and versions have been rediscovered a number of times, for example in \cite[Lemma 5]{BDFKK}.
    \item When $k$ is even, a version of this bound with an explicit dependence on $k$ was also proved by the first author and Maynard \cite[Theorem 2]{BM22}, where it played a crucial role in improving the upper bound for the size of sets of integers without square differences.
    \item When $n = 1$ and $k = 4$, Heath-Brown \cite{H-B} proves optimal bounds, including the correct power of $\log$, for the number of `non-trivial' solutions (i.e.~ excluding the diagonal contribution from those solutions where $a_1/q_1=-a_2/q_2$ and so on.) 
\end{itemize}

To our knowledge Theorem~\ref{th-simpfrac} is the first result which addresses the case of general $n$ and general $k$, a generality which is crucial for our applications. 

A simple application of the Cauchy--Schwarz inequality together with the available bounds for even $k$ immediately implies an upper bound of the shape
\[\ll (\log Q)^{O(1)}n^{\frac{k}{2}}Q^{\frac{k}{2}}\]
for odd $k\geq 3$. (In particular, note that when $n=Q$ the bound for odd $k$ follows immediately from the bound for even $k$, which is much easier to establish.) Theorem~\ref{th-simpfrac} represents a saving of $(n/Q)^{1/2}$ over this `trivial' bound. Proving a bound of this strength is the main novel contribution of this paper, and such a bound is the driving force behind the proofs of Theorems~\ref{thm:montgomery-vaughan-odd} and \ref{thm:conjecture-for-Rkh-for-k-odd}.

We defer further discussion of Theorem~\ref{th-simpfrac}, including lower bounds and heuristic predictions, to Section~\ref{sec-problems}. We prove Theorem~\ref{th-simpfrac} (in fact a more general form that is necessary for applications) in Section~\ref{sec-fractions}. The deduction of Theorems~\ref{thm:montgomery-vaughan-odd} and \ref{thm:conjecture-for-Rkh-for-k-odd} is done in Section~\ref{sec-apps}.

\subsection*{Acknowledgements}
The first author is supported by Royal Society University Research Fellowship. The second author is supported by the National Science Foundation's Mathematical Sciences Research program through the grant DMS-2202128. We thank Tim Browning, Daniel Loughran, and Valentin Blomer for discussions and references concerning Theorem~\ref{th-simpfrac}. We thank the anonymous referees for helpful comments and suggestions.

\section{Adding an odd number of fractions}\label{sec-fractions}

The new device which allows for our main results is the following upper bound on the number of solutions to a rational equation with an odd number of summands.

\begin{theorem}\label{th-oddsum}
Let $k\geq 1$ be an integer. Let $Q_1,\ldots,Q_{2k+1}\geq 1$ and $A_1,\ldots,A_{2k+1}\subseteq [-1,1]$ be intervals of lengths $\delta_1,\ldots,\delta_{2k+1}$ respectively, where $\delta_i\geq 1/Q_i$ for $1\leq i\leq 2k+1$. 

There exists some $X\subseteq \{1,\ldots,2k+1\}$ with $\abs{X}=k+1$ such that the number of solutions to
\[\frac{a_1}{q_1}+\cdots+\frac{a_{2k+1}}{q_{2k+1}}\in \bbz\quad\textrm{ where }\quad\ogcd{a_i}{q_i}=1\textrm{ for }1\leq i\leq 2k+1,\] 
with $a_i/q_i\in A_i$ and $q_i\leq Q_i$ for $1\leq i\leq 2k+1$, is
\[\ll (\log(Q_1\cdots Q_{2k+1}))^{O(1)}\brac{\prod_{i\in X}\delta_i}Q_1\cdots Q_{2k+1}.\]
\end{theorem}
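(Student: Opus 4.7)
The plan is to combine a dyadic reduction with a structural analysis of the divisibility constraints forced by the integrality of the sum. At the cost of a $(\log(Q_1\cdots Q_{2k+1}))^{O_k(1)}$ factor, I first dyadically decompose so that each $q_i\in(Q_i/2,Q_i]$, and absorb any integer constant arising in the equation $\sum_i a_i/q_i\in\bbz$ by shifting the intervals $A_i$. After reordering indices so that $Q_1\leq \cdots\leq Q_{2k+1}$, the aim is to establish the bound with $X=\{1,\ldots,k+1\}$.

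The key structural input is: whenever $\sum_i a_i/q_i\in\bbz$ with $\ogcd{a_i}{q_i}=1$ for all $i$, for every prime $p$ the maximum of $v_p(q_1),\ldots,v_p(q_{2k+1})$ is attained by at least two indices; equivalently, $q_i\mid \mathrm{lcm}_{j\ne i}q_j$ for every $i$. A straightforward Cauchy-Schwarz argument would split the indices into $Y$ of size $k+1$ and $Z$ of size $k$, write $N=\sum_r N_Y(r)N_Z(-r)$, and bound $N\leq \brac{\sum_r N_Y(r)^2}^{1/2}\brac{\sum_r N_Z(r)^2}^{1/2}$ via the (known) even-case bounds for $2k+2$ and $2k$ fractions respectively. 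This yields only $N\ll (\log)^{O_k(1)}\brac{\prod_i \delta_i}^{1/2}\prod_i Q_i$, which in the uniform regime $\delta_i\approx\delta$ gives $\delta^{k+\tfrac12}\prod_i Q_i$ --- exactly one half-factor of $\delta$ short of the claimed bound $\delta^{k+1}\prod_i Q_i$.

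The plan for recovering the missing $\delta^{1/2}$-saving --- the main novel contribution --- is to exploit the structural observation above to restrict the possible values of the target rational $r$ in the Cauchy-Schwarz expansion. For any contributing tuple the reduced denominator of $r$ must divide the $\mathrm{lcm}$ of some subset of the $q_i$'s, and the "at least two indices per prime" condition forces this denominator to lie in a sparse subset of $\bbq/\bbz$. The plan is to use this sparsity to upgrade the even-case estimates for $\sum_r N_Y(r)^2$ and $\sum_r N_Z(r)^2$ into refined bounds that carry the extra factor of $\delta^{1/2}$. This will be done inductively on $k$, with base case $k=1$ supplied by the Blomer-Br\"udern bound of \cite{BB}, and with the dyadic ordering $Q_1\leq\cdots\leq Q_{2k+1}$ allowing one to push the savings onto the $k+1$ smallest indices to produce the desired subset $X$.

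The main obstacle is quantifying the sparsity precisely and integrating it uniformly across the dyadic decomposition and the distinct $\delta_i$'s. The oddness of $2k+1$ enters in an essential way: the "pairing" of indices forced by the prime-by-prime maximality condition necessarily leaves one index "unpaired" at some prime, and it is this unpaired index which produces the extra $\delta^{1/2}$ over the even case. Identifying this unpaired index consistently across primes, and propagating the resulting saving through the induction (so that the final exponent is exactly $k+1$ and not some nearby value), is where the bulk of the technical work lies.
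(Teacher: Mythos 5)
Your proposal takes a genuinely different route from the paper, and it contains a gap I do not see how to fill. The paper's proof never uses Cauchy--Schwarz at all: it parameterises the denominators $q_1,\ldots,q_{2k+1}$ by relative greatest common divisors $g_I$ ($I\subseteq\{1,\ldots,2k+1\}$, with $q_i=\prod_{I\ni i}g_I$ and $\ogcd{g_I}{g_J}=1$ unless $I\subseteq J$ or $J\subseteq I$), chooses the split $X\sqcup Y$ to \emph{minimise} $\prod_{I\subseteq X}Q_I\prod_{J\subseteq Y}Q_J$ over dyadic classes of the $g_I$, and then runs a four-step count: fix a sub-interval of width $<1/Q_i$ containing $a_i/q_i$ for $i\in X$ and all $g_J$ with $J\subseteq Y$; fix the $g_I$ with $|I\cap X|\geq 2$; recover the $g_I$ with $|I\cap X|=1$ from congruence conditions coming from \eqref{eq-simpsum}, using the minimality of the split to control the cost via a Farey-spacing argument; and finally recover $a_j$ for $j\in Y$ from residual congruences. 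The full factor of $\prod_{i\in X}\delta_i$ (rather than $\prod_i\delta_i^{1/2}$) appears because, once $q_i$ is determined and $a_i/q_i$ is pinned to an interval of width $<1/q_i$, the numerator $a_i$ is \emph{forced}: the saving comes from this rigidity plus cascading congruences, not from a second-moment bound.

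The gap in your plan is in the step meant to recover the missing $\delta^{1/2}$. After applying Cauchy--Schwarz, the quantities $\sum_r N_Y(r)^2$ and $\sum_r N_Z(r)^2$ are sums over \emph{all} $r\in\bbq/\bbz$; restricting the values of $r$ that actually occur in the original count $N=\sum_r N_Y(r)N_Z(-r)$ cannot improve these second moments, because the inequality has already discarded the coupling between the two halves. Worse, the even-case bounds $\sum_r N_Y(r)^2\ll(\log Q)^{O(1)}\prod_{i\in Y}\delta_iQ_i^2$ are essentially sharp (the diagonal $a_i/q_i=a_i'/q_i'$ alone attains them), so there is no refinement available regardless of how sparse the relevant set of $r$ is. If you want to exploit sparsity of $r$ you must abandon Cauchy--Schwarz and bound $\sum_rN_Y(r)N_Z(-r)$ directly, but then nothing in your sketch supplies the mechanism. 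Finally, the claim that the prime-by-prime condition ``$\max_i v_p(q_i)$ is attained at least twice'' forces a global pairing that ``leaves one index unpaired'' is not correct: the attaining pairs vary freely from prime to prime, and with $2k+1$ indices nothing prevents every index from being paired at some prime. The correct consequence of that structural observation, which the paper does use, is only that $g_{\{i\}}=1$ for all $i$; extracting a $\delta$-saving requires the much finer bookkeeping of the full relative-gcd decomposition. Your base case (Blomer--Br\"udern for $k=3$) is sound, but as things stand there is no inductive step to attach to it.
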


Theorem~\ref{th-simpfrac} is an immediate corollary. The proof of Theorem~\ref{th-oddsum} is quite technical, and so we will first give sketch proofs of three special cases: first when $k=3$ and all the intervals $A_i$ are identical and centred at the origin, secondly when $k=3$ without any restriction on the intervals, and finally when $k=5$, the intervals are centred on the origin, and the denominators $q_i$ have a particular arithmetic structure. These proofs offer representative illustrations of the main ideas, and so we hope will help the reader to follow the proof of Theorem~\ref{th-oddsum} itself, which is a more technical and general elaboration of the same theme.

Our technique also proves the following slight variant of Theorem \ref{th-oddsum}:
\begin{theorem}\label{th-oddsum-n-within-sets}
Let $k \geq 1$ be an integer. Let $Q_1, \ldots, Q_{2k+1} \geq 1$ and $B_i \subseteq [-Q_i,Q_i]$ for $1 \le i \le 2k+1$. There exists a partition $\{1, \ldots, 2k+1\} = X \sqcup Y$ with $|X| = k+1$ and $|Y| = k$ such that the number of solutions to
\[\frac{a_1}{q_1}+\cdots+\frac{a_{2k+1}}{q_{2k+1}}\in \bbz\quad\textrm{ where }\quad\ogcd{a_i}{q_i}=1\textrm{ for }1\leq i\leq 2k+1,\] 
with $a_i\in B_i$ and $q_i\leq Q_i$ for $1\leq i\leq 2k+1$, is
\[\ll (\log(Q_1\cdots Q_{2k+1}))^{O(1)}\brac{\prod_{i\in X}|B_i|}\brac{\prod_{j \in Y} Q_j}.\]
\end{theorem}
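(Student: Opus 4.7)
The plan is to reduce Theorem~\ref{th-oddsum-n-within-sets} to Theorem~\ref{th-oddsum} via a dyadic decomposition of both numerators and denominators. The intuition is that the bound $\prod_{i\in X}\delta_i \cdot Q_1\cdots Q_{2k+1}$ of Theorem~\ref{th-oddsum} factorises as $\prod_{i\in X}(\delta_i Q_i)\prod_{j\in Y}Q_j$, and $\delta_i Q_i$ is, up to constants, the number of admissible integers $a_i$ in the relevant geometric interval. The set-based analogue should simply replace this cardinality by $|B_i|$.

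Concretely, for each $i$ decompose $a_i\in (N_i/2,N_i]$ and $q_i\in (Q_i'/2,Q_i']$ for dyadic parameters $N_i,Q_i'$, producing only $(\log(Q_1\cdots Q_{2k+1}))^{O_k(1)}$ configurations. Within a fixed configuration the ratio $a_i/q_i$ lies in an interval of length $\delta_i \asymp N_i/Q_i' \ge 1/Q_i'$, so Theorem~\ref{th-oddsum} applies at denominator scale $Q_i'$ and yields, for some $X$ of size $k+1$ (possibly depending on the configuration), a count $\ll_k (\log(Q_1\cdots Q_{2k+1}))^{O_k(1)}\prod_{i\in X}N_i\prod_{j\in Y}Q_j'$. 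Since $X$ takes only $\binom{2k+1}{k+1}$ values, by pigeonhole we may fix a single $X$ at the cost of an $O_k(1)$ factor, and $\sum_{Q_j'\text{ dyadic},\, Q_j'\leq Q_j}Q_j'\ll Q_j$ for $j\in Y$ produces the factor $\prod_{j\in Y}Q_j$.

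The main difficulty is in summing over the dyadic scales $N_i$ for $i\in X$: the crude estimate $\sum_{N_i\text{ dyadic}}N_i\ll Q_i$ discards the set restriction entirely and only recovers the trivial bound. To remedy this, one must re-examine the proof of Theorem~\ref{th-oddsum} itself: wherever the argument exploits an interval $A_i$ purely through its integer-count (for example in lattice-point counts or divisor-sum estimates over the admissible $a_i$), one substitutes the refined count $|B_i\cap (N_i/2,N_i]|$. After this substitution, the summation over $N_i$ telescopes via $\sum_{N_i}|B_i\cap (N_i/2,N_i]|=|B_i|$ and produces the claimed bound. The main obstacle is thus to verify that the combinatorial machinery underlying Theorem~\ref{th-oddsum} — principally Cauchy–Schwarz, Farey-type dissection, and divisor function estimates — is robust to replacing intervals of admissible $a_i$ by arbitrary subsets, depending only on the cardinality of the numerator set and not on its geometric structure; this robustness is what permits the authors to record Theorem~\ref{th-oddsum-n-within-sets} as a variant of Theorem~\ref{th-oddsum}.
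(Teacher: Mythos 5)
You correctly anticipate that a black-box dyadic reduction to Theorem~\ref{th-oddsum} cannot work: once $a_i$ is restricted to a dyadic block $(N_i/2, N_i]$, Theorem~\ref{th-oddsum} only sees the interval of width $\asymp N_i/Q_i'$ containing $a_i/q_i$ and is blind to the cardinality $|B_i \cap (N_i/2, N_i]|$, which is exactly what must telescope to $|B_i|$. Your remedy — reopen the proof — is the right move (the paper itself gives no argument, asserting the variant follows ``along essentially identical lines''), but at that point the dyadic layer you introduced does no work and can be discarded. More importantly, your claim that the interval $A_i$ is ``exploited purely through its integer-count'' is inaccurate: in Step three of the proof of Theorem~\ref{th-oddsum} the interval is used geometrically, via a Farey separation argument, to show that the fraction $\ell_j/h_j$ lies in a known interval of width $O(1/H_jf_j)$ and hence takes $O(H_j/f_j)$ values. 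That is not a pure cardinality statement, and a verbatim ``substitute $|B_i|$ for $\delta_iQ_i$'' would not obviously carry it over.

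What actually makes the set variant go through is a slightly different (and in fact simpler) observation, which your proposal does not spell out. In Step one one fixes the numerators $a_i\in B_i$ directly, at cost $\prod_{i\in X}|B_i|$, in place of fixing a sub-interval of $A_i$ of width $<1/Q_i$ at cost $\delta_iQ_i$. With $a_i$ already known, the Farey dissection in Step three becomes unnecessary: the congruence $a_i\equiv r_ih_i\pmod{f_i}$, together with $\gcd(r_i,f_i)=1$ (which follows since $f_i\mid q_i$ and $\gcd(a_i,q_i)=1$, so $\gcd(a_i,f_i)=1$), determines $h_i\pmod{f_i}$, and combined with $h_i\asymp H_i$ (and $f_i\ll H_i$ from \eqref{eq-usefulbound}) this gives $O(H_i/f_i)$ choices directly. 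Steps two and four are unchanged, and one still uses the on-average divisor-function bound for $h_1$. Your proposal has the correct intuition that the numerator set enters only through its size, but to be a complete proof you would need to replace the hand-waving ``wherever the argument uses an interval, substitute a cardinality'' with the concrete modification of Steps one and three above, and drop the dyadic decomposition, which is a detour you correctly identified as leading nowhere.
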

We omit the proof of Theorem~\ref{th-oddsum-n-within-sets}, which follows along essentially identical lines to the proof of Theorem~\ref{th-oddsum}. In fact, the special cases that we sketch when $k = 3$ or $k=5$ and all intervals are centered at the origin are also special cases of Theorem~\ref{th-oddsum-n-within-sets}.

We make essential use of the notion of \emph{relative greatest common divisors}: this is the observation that $k$-tuples $q_1,\ldots,q_k$ of natural numbers can be parameterised by $2^k$-tuples of natural numbers $g_I$, where $I$ ranges over all subsets of $\{1,\ldots,k\}$, where $g_\emptyset = 1$, with the properties that 
\[q_i=\prod_{I \ni i}g_I\textrm{ for all }i\in \{1, \ldots, k\}\]
and
\[\ogcd{g_I}{g_J}=1\textrm{ unless }I\subseteq J \textrm{ or }J\subseteq I.\]
For example, when $k=2$ this is just the fact that pairs of natural numbers $(a,b)$ are parameterised by triples $(a',b',d)$ with $\ogcd{a'}{b'}=1$, where $a=a'd$ and $b=b'd$, by taking $d=\ogcd{a}{b}$. 

The concept of relative greatest common divisors is an old one within number theory, although perhaps not as widely known as it deserves; we give a brief history, as well as details of the construction and proofs of the necessary properties, in Appendix~\ref{app1}. We recommend the reader new to relative greatest common divisors peruse this appendix before continuing with the rest of this section. Parameterisations of this kind are a useful tool when counting solutions to linear equations in the rationals, and in particular played a central role in the work of La Bret\`{e}che \cite{dlB} and Heath-Brown \cite{H-B}.

\subsection{The case $k=3$}\label{subsec:k-3-first-case}
As a warm-up, we first show that the number of solutions to 
\begin{equation}\label{eq-k3}
\frac{a_1}{q_1}+\frac{a_2}{q_2}+\frac{a_3}{q_3}=0\quad\textrm{ where }\quad\ogcd{a_i}{q_i}=1\textrm{ for }1\leq i\leq 3, 
\end{equation}
with $\abs{a_i}\leq n$, and $q_i\leq Q$, is $O_\epsilon(n^2Q^{1+\epsilon})$ for any $\epsilon>0$. 

Using relative greatest common divisors, there exist integers $g_I$ for $I\subseteq\{1,2,3\}$ such that $\ogcd{g_I}{g_J}=1$, unless either $I\subseteq J$ or $J\subseteq I$, with
\[q_1=g_{123}g_{12}g_{13}g_1\quad q_2=g_{123}g_{12}g_{23}g_2\quad q_3=g_{123}g_{13}g_{23}g_3.\]
Without loss of generality, relabelling if necessary, we can assume that $g_{23}\leq g_{12}$. Multiplying both sides of \eqref{eq-k3} by $\prod_{I\subseteq \{1,2,3\}}g_I$ we have
\[a_1g_{23}g_{2}g_3+a_2g_{13}g_1g_3+a_3g_{12}g_1g_2=0.\]
Note that $g_1$ divides the second two summands, so $g_1\mid a_1g_{23}g_2g_3$. On the other hand, $\ogcd{a_1}{g_1}=\ogcd{a_1}{q_1}=1$, and $\ogcd{g_1}{g_{23}g_2g_3}=1$ by properties of relative greatest common divisors, and hence this forces $g_1=1$. We can similarly deduce that $g_2=g_3=1$. The condition \eqref{eq-k3} has now been simplified to
\[a_1g_{23}+a_2g_{13}+a_3g_{12}=0.\]
We choose, at a cost of $Q$, the value of $q_1=g_{123}g_{12}g_{13}$. We now make use of the \emph{divisor bound} (see for example \cite[equation (2.20)]{MV06}), which says that for any $\epsilon > 0$, the number of divisors of an integer $z$ is $\ll_\epsilon z^\epsilon$. Thus, given a choice of $q_1$, we can choose individual values of $g_{123}, g_{12}, g_{13}$, inflating the number of choices made by an additional factor of $O_\epsilon(Q^\epsilon)$. 

We further fix the values of $a_1$ and $a_2$, at an additional cost of $O(n^2)$. The total number of choices made thus far is $O_\epsilon(n^2Q^{1+\epsilon})$. We have used up our budget, and therefore need to show that the remaining unknown values of $a_3$ and $g_{23}$ can be determined from the information already known. For this we note that 
\[a_1g_{23}+a_2g_{13}\equiv 0\pmod{g_{12}}.\]
Since $\ogcd{a_1}{g_{12}}=1$ (and the values of $a_1,a_2,g_{12},g_{13}$ are all known) we can deduce the value of $g_{23}\pmod{g_{12}}$. Recalling our assumption that $g_{23}\leq g_{12}$ it follows that $g_{23}$ itself is known, and now $a_3$ can be recovered, as the only remaining unknown. This concludes the proof.

We note that there is an alternative way to proceed: we could instead fix the values of $a_1,a_2,q_3$, and then recover $a_3$ and $g_{12}$, which is accomplished by the divisor bound since
\[a_3g_{12}=-a_1g_{23}-a_2g_{13}\]
is known. This method is perhaps simpler (and does not require the assumption that $g_{23}\leq g_{12}$) but the generalisation to other odd $k>3$ is (or at least seems to be) quantitatively poorer than the previous method.

\subsection{A harder case when $k=3$}\label{subsec-k3}
For our applications it is important to handle the case when the constraint $\abs{a_i}\leq n$ is replaced by $a_i/q_i\in I_i$, where $I_i\subseteq [-1,1]$ is some fixed interval. We will now explain how this generalisation is handled when $k=3$.

Let $Q_1,Q_2,Q_3\geq 1$ and $A_1,A_2,A_3\subseteq [-1,1]$ be some fixed intervals of lengths $\delta_i\gg 1/Q_i$. We will show that the number of solutions to 
\[\frac{a_1}{q_1}+\frac{a_2}{q_2}+\frac{a_3}{q_3}=0\quad\textrm{ where }\quad\ogcd{a_i}{q_i}=1\textrm{ for }1\leq i\leq 3\]
with $\frac{a_i}{q_i}\in A_i$ and $q_i\leq Q_i$ for $1\leq i\leq 3$ is $O_\epsilon(\delta_1\delta_2(Q_1Q_2Q_3)^{1+\epsilon})$ for any $\epsilon>0$, assuming $\delta_1\geq \delta_2\geq \delta_3$, say. (The previous subsection corresponds to the special case when all $Q_i=Q$ and $A_i=A$, some interval of width $O(n/Q)$ centred at $0$.)

As above, we decompose the $q_i$ into relative greatest common divisors $g_I$; recall from Section \ref{subsec:k-3-first-case} that $g_1 = g_2 = g_3 = 1$. At the cost of only $(\log Q_1Q_2Q_3)^{O(1)}$ we can assume that each $g_I\in [G_I,2G_I]$ for some known integers $G_I\geq 1$, for each $|I| \ge 2$. We can assume without loss of generality that $G_{23}\geq G_{12}$. We choose the values of $g_{123},g_{12},g_{13}$ (and thus $q_1$) at a cost of $O(G_{123}G_{12}G_{13}) = O(Q_1)$. We further choose an interval of width $O(1/Q_1)$ which contains $a_1/q_1$ and an interval of width $O(1/Q_2)$ which contains $a_2/q_2$, which costs a further $O(\delta_1\delta_2Q_1Q_2)$, since $A_i$ can be divided into $O(\delta_i Q_i)$ many sub-intervals of width $O(1/Q_i)$. Note that then, since we know an interval of width $O(1/q_1)$ which contains $a_1/q_1$, and the denominator $q_1$ is known, we also know $a_1$ (after perhaps an additional cost of $O(1)$). We have spent a total of $O_\epsilon(\delta_1\delta_2Q_1^{2+\epsilon}Q_2)$, and need to now argue that we can determine the remaining values of $g_{23},a_2,a_3$ at a cost of $O(G_{23}/G_{12})=O(Q_3/Q_1)$.

As in the previous subsection, we have the modular constraint that
\[a_1g_{23}+a_2g_{13}\equiv 0\pmod{g_{12}}.\]
This means (since $a_1,g_{12},g_{13}$ are all known, and $a_1g_{13}$ is coprime to $g_{12}$) that there is some known integer $1\leq r<g_{12}$ such that $a_2\equiv rg_{23}\pmod{g_{12}}$. That is, there must exist some integer $k$ such that $a_2=rg_{23}+kg_{12}$, whence
\[\frac{a_2}{q_2}=\frac{r}{g_{12}g_{123}}+\frac{k}{g_{23}g_{123}}.\]
(Note that since $\ogcd{a_2}{g_{23}}=1$ we must have $\ogcd{k}{g_{23}}=1$.) Since $a_2/q_2$ lies inside some known interval of length $O(1/Q_2)$ we deduce that $k/g_{23}g_{123}$ lies inside some known interval of length $O(1/Q_2)$, and hence $k/g_{23}$ lies inside some known interval of length $O(1/G_{12}G_{23})$. We now observe that any two distinct fractions $\frac{b_1}{r_1}\neq \frac{b_2}{r_2}$ with denominators in $[G_{23},2G_{23}]$ are separated by $\gg 1/G_{23}^2$, since
\[\abs{\frac{b_1}{r_1}-\frac{b_2}{r_2}}=\abs{\frac{b_1r_2-b_2r_1}{r_1r_2}}\geq \frac{1}{r_1r_2}\gg \frac{1}{G_{23}^2}.\]
Since $G_{12}\leq G_{23}$ by assumption, therefore, there are only $O(G_{23}/G_{12})$ many possible choices for $k/g_{23}$. Fixing such a choice fixes both $g_{23}$ and $a_2$, and then $a_3$ is known, and the solution is determined.

\subsection{A special case when $k=5$}
We now sketch a proof that the number of solutions to 
\begin{equation}\label{eq-k5}
\frac{a_1}{q_1}+\frac{a_2}{q_2}+\frac{a_3}{q_3}+\frac{a_4}{q_4}+\frac{a_5}{q_5}=0\quad\textrm{ where }\quad \ogcd{a_i}{q_i}=1\textrm{ for }1\leq i\leq 5, 
\end{equation}
with $\abs{a_i}\leq \min(n,q_i)$ and $q_i\in [Q,2Q]$, is $O_\epsilon(n^3Q^{2+\epsilon})$ for any $\epsilon>0$. As before, we will use relative greatest common divisors. Here we consider a special case: we restrict ourselves to bounding the number of solutions under the additional assumption that $g_I=1$ unless $\abs{I}=2$. Certainly not all solutions satisfy this assumption, but restricting the number of $g_I$ that need to be considered allows for a clearer presentation of the essential ideas.

Clearing denominators in \eqref{eq-k5} we need to bound the number of $a_i$ for $1\leq i\leq 5$ and $g_I$ for $I\subseteq \{1,2,3,4,5\}$ with $\abs{I}=2$, such that
\begin{equation}\label{eq-k5'}
a_1\prod_{I\not\ni 1}g_I+a_2\prod_{I\not\ni 2}g_I+a_3\prod_{I\not\ni 3}g_I+a_4\prod_{I\not\ni 4}g_I+a_5\prod_{I\not\ni 5}g_I=0.
\end{equation}
Losing only a factor of $(\log Q)^{O(1)}$ in the final bound, it suffices to count those solutions where $g_I\in [G_I,2G_I]$ for some fixed integers $G_I\geq 1$, for all $I$. (Notice that since $q_i=\prod_{I\ni i}g_I$ we have $Q\asymp \prod_{I\ni i}G_I$ for all $i$.)

Let $X\sqcup Y=\{1,2,3,4,5\}$ with $\abs{X}=3$ and $\abs{Y}=2$ be a partition chosen to minimise 
\[\prod_{I\subseteq X}G_I\prod_{J\subseteq Y}G_J.\]
Note that this implies that, for all $i\in X$,
\begin{equation}\label{eq-k5''}
\prod_{\substack{y \in Y}}G_{iy}\geq \prod_{\substack{x \in X}}G_{ix},
\end{equation}
or else the minimised product can be decreased by replacing $X$ with $Y\cup \{i\}$ and $Y$ with $X\backslash \{i\}$. Relabelling if necessary, we can assume that $X=\{1,2,3\}$ and $Y=\{4,5\}$. 

We begin by fixing the values of $a_1,a_2,a_3$, and $g_{12},g_{13},g_{14},g_{15},g_{23},g_{45}$ at a total cost of 
\[\ll n^3G_{12}G_{13}G_{14}G_{15}G_{23}G_{45}\ll n^3QG_{23}G_{45}.\]
It remains to find the values of $a_4,a_5,g_{24},g_{25},g_{34},g_{35}$. We will first choose the values of $g_{24}$ and $g_{25}$. Reducing \eqref{eq-k5'} modulo $g_{12}$ yields
\[g_{34}g_{35}g_{45}\brac{a_1g_{23}g_{24}g_{25}+a_2g_{13}g_{14}g_{15}}\equiv 0\Mod{g_{12}}.\]
Since $\ogcd{g_{34}g_{35}g_{45}}{g_{12}}=\ogcd{a_1g_{23}}{g_{12}}=1$, and the values of $a_1,a_2,g_{23},g_{13},g_{14},g_{15}$ are all known, it follows that the value of
\[g_{24}g_{25}\Mod{g_{12}}\]
is known. Since $g_{24}g_{25}\in [G_{24}G_{25},4G_{24}G_{25}]$, the value of $g_{24}g_{25}$ is determined after paying a cost of
\[\ll 1+ \frac{G_{24}G_{25}}{g_{12}}\ll 1+\frac{G_{24}G_{25}}{G_{12}}\ll \frac{G_{24}G_{25}}{G_{12}},\]
since, by \eqref{eq-k5''},
\[G_{24}G_{25}\geq G_{12}G_{23}\geq G_{12}.\]
The values of $g_{24}$ and $g_{25}$ can be recovered via the divisor bound, at an additional cost of $O_\epsilon(Q^\epsilon)$. 

Similarly, reducing \eqref{eq-k5'} modulo $g_{13}g_{23}$ and dividing by $g_{45}$ (which must be relatively prime to $g_{13}g_{23}$),
\[g_{34}g_{35}\brac{a_1g_{23}g_{24}g_{25}+a_2g_{13}g_{14}g_{15}}+a_3g_{12}g_{14}g_{15}g_{24}g_{25} \equiv 0 \Mod{g_{13}g_{23}}.\]
Since 
\[\ogcd{a_3g_{12}g_{14}g_{15}g_{24}g_{25}}{g_{13}g_{23}}=\ogcd{g_{34}g_{35}}{g_{13}g_{23}}=1\]
(because $\ogcd{a_3}{q_3}=1$ and $\ogcd{g_I}{g_J}=1$ whenever $\abs{I}=\abs{J}=2$ and $I\neq J$),
and since
\[\ogcd{a_1g_{23}g_{24}g_{25} + a_2g_{13}g_{14}g_{15}}{g_{13}g_{23}}=1\]
(because by the same logic as the previous, the first summand is relatively prime to $g_{13}$ and the second is relatively prime to $g_{23}$), the value of 
\[g_{34}g_{35}\Mod{g_{13}g_{23}}\]
is known. As above, since by \eqref{eq-k5''} we have $G_{34}G_{35}\geq G_{13}G_{23}$, the values of $g_{34}$ and $g_{35}$ can now be fixed after paying
\[\ll_\epsilon Q^\epsilon \frac{G_{34}G_{35}}{G_{13}G_{23}}.\]
The values of all $g_I$ are now known, and it suffices to fix the values of $a_4$ and $a_5$. The value of $a_4$ is (again consulting \eqref{eq-k5'}) known modulo $g_{14}g_{24}g_{34}$, and so since it suffices to fix the value of $a_4\Mod{q_4}$ we can recover $a_4$ after paying
\[\ll \frac{Q}{G_{14}G_{24}G_{34}}.\]
The value of $a_5$ is now fixed, as it is the only remaining unknown in \eqref{eq-k5'}. All the variables are now chosen, and it remains to total up our costs. We have spent a total of
\[\ll_\epsilon Q^\epsilon n^3\brac{QG_{23}G_{45}}\brac{\frac{G_{24}G_{25}}{G_{12}}}\brac{\frac{G_{34}G_{35}}{G_{13}G_{23}}}\brac{\frac{Q}{G_{14}G_{24}G_{34}}}=n^3Q^{2+\epsilon}\frac{G_{25}G_{35}G_{45}}{G_{12}G_{13}G_{14}}.\]
Recalling that $G_{25}G_{35}G_{45}\ll Q/G_{15}$, and $Q\ll  G_{12}G_{13}G_{14}G_{15}$, this bound is
\[\ll n^3Q^{3+\epsilon}\frac{1}{G_{12}G_{13}G_{14}G_{15}}\ll n^3Q^{2+\epsilon}\]
as required.

\subsection{Proof of Theorem~\ref{th-oddsum}}
We will now prove Theorem~\ref{th-oddsum} in full. The difficulties compared to the simplified sketches above are (i) to generalise that machinery to handle arbitrary odd $k\geq 3$, (ii) to handle the potentially non-trivial contribution from those $g_I$ with $\lvert I\rvert\geq 2$, (iii) to take more care with bounding divisors to produce factors of $(\log Q)^{O(1)}$ rather than $Q^\epsilon$, and (iv) to allow for constraints of the shape $a_i/q_i\in I_i$ as in Section~\ref{subsec-k3}. It is a non-trivial task to handle these issues, but nonetheless we view them as largely technical, and believe that the presentation of the simplified cases above captures the essential spirit of the proof.

All implicit constants in this proof may depend on $k$ in some unspecified fashion. For convenience we write $Q=Q_1\cdots Q_{2k+1}$. In this proof we use $\tau_r(n)$ to denote the $r$-fold divisor function (that is, the number of representations $n=d_1\cdots d_r$), and recall the elementary estimate (used without further mention) that, for any constant $\ell>0$ and integer $r\geq 1$,
\begin{equation}\label{eq-divbound}
\sum_{n\leq N}\tau_r(n)^\ell \leq (\log N)^{O_{\ell,r}(1)}N.
\end{equation}
See for example \cite[equation (2.31)]{MV06}, noting that $\tau_r(n) \le \tau(n)^r$ for all $r \ge 1$.

It suffices to bound the number of solutions to 
\begin{equation}\label{eq-fracsum}
\frac{a_1}{q_1}+\cdots+\frac{a_{2k+1}}{q_{2k+1}}=n
\end{equation}
for any fixed integer $n$ (since there are trivially $O(1)$ possible integers that need to be considered). Let $g_I$, as $I$ ranges over subsets of $\{1,\ldots,2k+1\}$, be the relative greatest common divisor parameterisation of $q_i$. At the acceptable cost of an additional factor of $(\log Q)^{O(1)}$, it suffices to prove the target bound for the number of solutions to \eqref{eq-fracsum} under the additional assumption that $g_I\in [G_I,2G_I]$ for some fixed integers $G_I$. Let $\{1,\ldots,2k+1\}=X\sqcup Y$ be a partition with $\abs{X}=k+1$ and $\abs{Y}=k$ such that 
\begin{equation}\label{eq-minim}
\prod_{I\subseteq X}G_I \prod_{J\subseteq Y}G_J
\end{equation}
is minimised. Relabelling if necessary, we will assume that $X=\{1,\ldots,k+1\}$ and $Y=\{k+2,\ldots,2k+1\}$. (Note that in particular this choice of partition depends on the particular values of the $G_I$, and hence perhaps different partitions $X\sqcup Y$ may be used to bound different classes of solutions, but this clearly does not affect the validity of Theorem~\ref{th-oddsum}, since we can simply take a maximum of the upper bound over all possible partitions.)

Before proceeding with the main line of the proof we pause to record the useful inequality that, for all $1\leq i\leq k+1$,
\begin{equation}\label{eq-usefulbound}
\prod_{\substack{I\ni i\\I\subseteq\{i\}\cup Y}}G_I\geq \prod_{\substack{J\ni i\\ J\subseteq \{1,\ldots,i\}}}G_{J}.
\end{equation}
Indeed, \eqref{eq-usefulbound} follows immediately from the stronger inequality
\[\prod_{\substack{I\ni i \\I\subseteq \{i\}\cup Y}}G_I\geq \prod_{\substack{J\ni i\\ J\subseteq X}}G_{J},\]
which in turn follows by noting that if we replace $Y$ by $X\backslash \{i\}$ and $X$ by $Y\cup \{i\}$ then the quantity in \eqref{eq-minim} changes by a multiplicative factor of
\[\frac{\prod_{\substack{I\ni i\\I\subseteq \{i\}\cup Y}}G_I}{\prod_{\substack{J\ni i\\ J\subseteq X}}G_{J}},\]
and so by the assumed minimality of \eqref{eq-minim} this quantity is $\geq 1$ as required.

Upon multiplying both sides by $\prod_I g_I$, equation \eqref{eq-fracsum} is equivalent to
\begin{equation}\label{eq-simpsum}
a_1\prod_{I\not \ni 1}g_I+\cdots +a_{2k+1}\prod_{I\not \ni 2k+1}g_I= n\prod_I g_I.
\end{equation}
Note that $g_{\{i\}}$ divides the right-hand side and every summand on the left-hand side except $a_i\prod_{I\not\ni i}g_I$, and therefore $g_{\{i\}}\mid a_i\prod_{I\not\ni i}g_I$. Since $\ogcd{a_i}{q_i}=1$ and $\ogcd{g_{\{i\}}}{g_I}=1$ for $i\not\in I$ it follows that $g_{I}=1$ whenever $\abs{I}=1$. 

The proof will proceed along the following steps:
\begin{itemize}
\item \textbf{Step one.} For each $i \in X$ we fix $B_i$, a sub-interval of $A_i$ of length $< 1/Q_i$ in which $\frac{a_i}{q_i}$ will lie. We further fix all $g_I$ except those where $|I \cap X| = 1$. The total cost to fix these terms is
\begin{equation*}
\ll \prod_{i \in X} \delta_iQ_i\prod_{\substack{I \\ |I \cap X|\neq 1}} G_I.
\end{equation*}
\item \textbf{Step two.} We now carefully fix (and count possibilities for) the values of the $g_I$ with $|I \cap X| = 1$; these values are heavily constrained by the values which have already been fixed (i.e.~ the constraints from step one on $a_i/q_i$ for $i \in X$, and $g_I$ for all $I$ with $|I \cap X| \ne 1$). This step is by far the most complicated, largely because in order to only lose a factor of $(\log Q)^{O(1)}$, we must be careful to only use the divisor bound on average. The additional cost to fix these terms is
\begin{equation*}
\ll (\log Q)^{O(1)} \frac{\prod_{\abs{I\cap X}=1}G_I}{\prod_{J\subseteq X}G_J}.
\end{equation*}
\item \textbf{Step three.} All denominators have been fixed, which (since $a_i/q_i$ is known to lie in some fixed interval of width $<1/Q_i\leq 1/q_i$ for $i\in X$) also determines $a_i$ for $i \in X$. It remains only to count the possibilities for $a_i$ for all $i \in Y$. These values are almost fully determined by what has already been fixed; the number of possibilities for these $a_i$, given the information known at this point, is 
\begin{equation*}
\ll \prod_{\substack{I\\ \abs{I\cap Y}\geq 2}}G_I^{\abs{I\cap Y}-1}.
\end{equation*}
\end{itemize}

Multiplying these three contributions together and simplifying, we get an upper bound of
\begin{equation*}
\ll (\log Q)^{O(1)} \prod_{i \in X} \delta_i Q_i \prod_{\substack{I \\ I \not\subseteq X}} G_I^{|I \cap Y|}.
\end{equation*}
The proof is complete after noting that $Q_i\asymp \prod_{I \ni i}G_I$, and hence 
\[\prod_{j\in Y}Q_j\asymp \prod_I G_I^{\abs{I\cap Y}}.\]

It remains to prove steps one through three.

\subsubsection{Step one:}
This step is immediate by assumption on the $\frac{a_i}{q_i}$, and recalling that $g_I \in [G_I, 2G_I]$ for all $I$.

\subsubsection{Step two:}
We have now determined an interval $B_i$ of width $<1/Q_i$ containing $a_i/q_i$ for $i\in X$, as well as all $g_I$ except those where $\abs{I\cap X}=1$; our next goal is to fix the values of $g_I$ with $\abs{I\cap X}=1$. 

For brevity, define
\begin{equation}\label{eq-hdef}
h_i = \prod_{\substack{I \ni i\\ I\subseteq \{i\} \cup Y}}g_I
\end{equation}
for $i\in X$, and similarly $H_i= \prod_{\substack{I \ni i\\ I\subseteq \{i\} \cup Y}}G_I$, so that $h_i\asymp H_i$. We need to both determine the value of each $h_i$ and also fix a decomposition of $h_i$ into $\leq 2^{k}$ many factors, which will fix the value of each $g_I$ in the product. 

Let $P(h_1,\ldots,h_{k+1},a_1, \dots, a_{k+1})$ be the event that there exist some $a_{k+2}, \ldots, a_{2k+1}$ and decomposition of each $h_i$ into factors as in \eqref{eq-hdef} which (in conjunction with the already fixed $g_I$ for $\abs{I\cap X}\neq 1$) form a solution to \eqref{eq-simpsum} and satisfy the constraint that $a_i/q_i \in B_i$ for all $i \in X$. The total number of possibilities for all remaining $g_I$ can then be bounded above by
\[\sum_{h_1,\ldots,h_{k+1},a_1,\ldots,a_{k+1}}1_{P(h_1,\ldots,a_{k+1})}\brac{\prod_i 1_{h_i \asymp H_i}}\tau_{2^k}(h_1)\cdots \tau_{2^k}(h_{k+1}).\]
Using the elementary fact that $x_1\cdots x_{k+1}\leq x_1^{k+1}+\cdots+x_{k+1}^{k+1}$ it therefore suffices to show that
\begin{equation}\label{eq-hdivsum}
\sum_{h_1,\ldots,h_{k+1},a_1,\ldots,a_{k+1}}1_{P(h_1,\ldots,a_{k+1})}\brac{\prod_i 1_{h_i \asymp H_i}}\tau_{2^k}(h_1)^{k+1}
\end{equation}
is 
\[\ll (\log Q)^{O(1)}\prod_{1\leq i\leq k+1}\frac{\prod_{\substack{I \ni i\\ I\subseteq \{i\}\cup Y}}G_I}{\prod_{\substack{J \ni i\\ J\subseteq \{1,\ldots,i\}}}G_J}.\]

We now note that the condition $P(h_1,\ldots,h_{k+1}, a_1, \dots, a_{k+1})$ implies a certain collection of cascading restrictions. Fix some $1\leq i\leq k+1$. Multiplying equation \eqref{eq-fracsum} by $\prod_{I \not\subseteq \{1, \dots, i-1\}} g_I$ (recalling that $q_i=\prod_{i\in I}g_I$) yields
\begin{equation} \label{eq:general-sum-multiplied-up-to-i}\sum_{1\leq j<i}a_j\frac{\prod_{I \not\subseteq \{1, \dots, i-1\}} g_I}{\prod_{j\in I}g_I}+a_i\prod_{\substack{I \not\ni i\\ I\not\subseteq \{1, \dots, i-1\}}} g_I+\sum_{j>i}a_j\prod_{\substack{I \not\ni j\\ I\not\subseteq \{1, \dots, i-1\}}} g_I=n\prod_{I\not\subseteq \{1,\ldots,i-1\}}g_I.
\end{equation}
In particular, the first sum is an integer, so defining
\begin{equation}\label{eq:def-of-b_i}
    b_i=\sum_{1\leq j<i}a_j \frac{\prod_{\substack{I \not\ni j \\ I \not\subseteq \{1, \dots, i-1\}\\ I\not\subseteq \{i,\ldots,2k+1\}}}g_I}{\prod_{j \in I \subseteq \{1, \dots, i-1\}} g_I },
\end{equation}
we have that
\[\brac{\prod_{I\subseteq \{i,\ldots,2k+1\}}g_I}b_i=\sum_{1\leq j<i}a_j\frac{\prod_{I \not\subseteq \{1, \dots, i-1\}} g_I}{\prod_{I \ni j}g_I}\]
must be an integer. We claim further that $b_i$ itself is an integer. To see this, let $b_i = \frac{x_i}{y_i}$ be a fraction in lowest terms. The denominator $y_i$ must satisfy $y_i \mid\prod_{I \subseteq \{1, \ldots, i-1\}} g_I$, which is divisible by the denominator of each term in \eqref{eq:def-of-b_i}. Note that if $I \subseteq \{1, \ldots,i-1\}$ and $J \subseteq \{i, \ldots, 2k+1\}$, then $\ogcd{g_I}{g_J} = 1$. Thus
\[\bracgcd{y_i}{\prod_{I \subseteq \{i,\ldots,2k+1\}}g_I} \mid \bracgcd{\prod_{I \subseteq \{1, \ldots, i-1\}}g_I}{\prod_{I \subseteq \{i,\ldots,2k+1\}} g_I } = 1.  \]
But since $\frac{x_i}{y_i} \prod_{I \subseteq \{i, \ldots, 2k+1\}} g_I$ is an integer and $y_i$ is relatively prime to the product, we must have $y_i |x_i$ and thus $b_i$ is also an integer.

It is important to observe that $b_i$ depends only on $a_j$ and $h_j$ for $1\leq j<i$, as well as the values of $g_I$ with $\abs{I\cap X}\neq 1$ (which are all known).

Reducing equation \eqref{eq:general-sum-multiplied-up-to-i} modulo $\prod_{\substack{J \ni i \\ J \subseteq \{1, \dots, i\}}}g_J$ implies that
\[b_i\prod_{I\subseteq \{i,\ldots,2k+1\}}g_I\equiv -a_i \prod_{\substack{I \not\ni i \\ I \not\subseteq\{1, \dots, i-1\}}} g_I \pmod{\prod_{\substack{J \ni i \\ J \subseteq \{1,\ldots,i\}}} g_J},\]
that is,
\[b_i\prod_{j \ge i} h_j \prod_{\substack{I \subseteq\{i, \ldots, 2k+1\} \\ |I \cap X| \ne 1}} g_I \equiv -a_i \prod_{j \ne i}h_j \prod_{\substack{I \not\ni i \\ I \not\subseteq\{1,\ldots,i-1\}\\ \abs{I\cap X}\ne 1}}g_I \pmod{\prod_{\substack{J \ni i \\ J \subseteq \{1,\ldots,i\}}} g_J}.\]
Note that every term on each of the left and right sides is relatively prime to the modulus (where for $b_i$ we can conclude that it is relatively prime to the modulus because all the remaining terms are). Thus repeated terms may be canceled from the two sides to get that
\[b_i h_i \prod_{\substack{I \subseteq\{i, \ldots, 2k+1\} \\ |I \cap X| \ne 1}} g_I \equiv -a_i \prod_{j < i}h_j \prod_{\substack{I \not\ni i \\ I \not\subseteq\{1,\ldots,i-1\}\\ \abs{I\cap X}\ne 1}}g_I \pmod{\prod_{\substack{J \ni i \\ J \subseteq \{1,\ldots,i\}}} g_J}.\]

We conclude that there exists some congruence class $r_i \mod \prod_{\substack{J \ni i \\ J \subseteq \{1, \ldots, i\}}} g_J$, depending only on $g_I$ with $|I \cap X| \ne 1$ and on $a_j$ and $h_j$ for $j < i$, such that
\[h_ir_i \equiv a_i \mod \prod_{\substack{J \ni i \\ J \subseteq \{1, \ldots, i\}}} g_J.\]

At the same time, for $1 \le i \le k+1$, we have $q_i = h_i \prod_{\substack{I \ni i \\ |I \cap X| \ge 2}} g_I$. Thus $\frac{a_i}{q_i} \in B_i$ if and only if
\[ \frac{a_i}{h_i} \in \left(\prod_{\substack{I \ni i \\ |I \cap X| \ge 2}} g_I\right) B_i.\]
Let $C_i$ denote the interval $\left(\prod_{\substack{I \ni i \\ |I \cap X| \ge 2}} g_I\right) B_i$, and note that (recalling $B_i$ is an interval of length $O(1/Q_i)$), the interval $C_i$ has length $\ll O(1/H_i)$. 
The left-hand side of \eqref{eq-hdivsum} can therefore be bounded above by 
\[\sum_{h_1,\ldots,h_{k+1},a_1,\ldots,a_{k+1}}\tau_{2^k}(h_1)^{k+1}\prod_{1\leq i\leq k+1} 1_{h_i \asymp H_i}1_{a_i\equiv r_ih_i\bmod{f_i}}1_{a_i/h_i \in C_i}1_{a_i/q_i\in B_i}\]
(where $f_i=\prod_{i\in J\subseteq \{1,\ldots,i\}}g_J$ and the $r_j$ are integers that depend on $a_j$ and $h_j$ for $1\leq j<i$ in the manner specified above). 

We can now evaluate the sum over $a_j,h_j$ for $2\leq j\leq k+1$. We estimate these in reverse order, beginning with $j=k+1$, so that in our argument we can assume that the values of $a_i$ and $h_i$ are known for $1\leq i<j$, and hence $r_j,f_j$ are also known. Since $a_j \equiv r_j h_j \pmod{f_j}$, there exists some $\ell_j$ such that 
\[\frac{a_j}{h_jf_j} = \frac{r_j}{f_j} + \frac{\ell_j}{h_j}.\]
Since $\frac{a_j}{h_j} \in C_j$, $\frac{\ell_j}{h_j}$ lies in a known interval of length $O(1/H_jf_j)$. Since distinct fractions with denominators of size $\asymp H_j$ are separated by at least $\gg 1/H_j^2$, there are at most $O(H_j/f_j)$ acceptable choices of $\frac{\ell_j}{h_j}$ (and hence also of $h_j$), provided $f_j \ll H_j$. This is true by \eqref{eq-usefulbound}, since 
\[f_j \ll \prod_{\substack{J\ni j \\ J\subseteq \{1,\ldots,j\}}}G_J\leq \prod_{\substack{I\ni j\\ I\subseteq \{j\}\cup Y}}G_I=H_j.\]
Note that once each $h_j$ is fixed, $q_j$ is fixed as well, which means that $a_j$ is completely determined.

It follows that the left-hand side of \eqref{eq-hdivsum} is 
\[\ll \prod_{2\leq j\leq k+1}\brac{\frac{H_j}{\prod_{\substack{J\ni j \\ J\subseteq \{1,\ldots,j\}}}G_J}}\sum_{h_1\asymp H_1}\tau_{2^{k}}(h_1)^{k+1}\sum_{a_1} 1_{a_1/h_1 \in C_1}1_{a_1/q_1\in B_1}.\]
Since $C_1$ is an interval of length $\ll O(1/H_1)$, for any $h_1 \asymp H_1$ the number of integers $a_1$ such that $a_1/h_1 \in C_1$ is $O(1)$, and thus the inside sum over $a_1$ is $O(1)$.
The sum over $h_1$ is $\ll (\log Q)^{O(1)}H_1$ by \eqref{eq-divbound} and hence the left-hand side of \eqref{eq-hdivsum} is 
\[\ll (\log Q)^{O(1)}\prod_{1\leq j\leq k+1}H_j\prod_{2\leq j\leq k+1}\brac{\frac{1}{\prod_{\substack{J\ni j \\ J\subseteq \{1,\ldots,j\}}}G_J}}.\]
Recalling $H_j= \prod_{\substack{I \ni j\\ I\subseteq \{j\} \cup Y}}G_I$, this is
\[\ll (\log Q)^{O(1)}\frac{\prod_{\abs{I\cap X}=1}G_I}{\prod_{J\subseteq X}G_J}\]
as required.

\subsubsection{Step three:}
We have now fixed the values of all $a_i$ with $i \in X$ and all $g_I$. It remains to fix the values of $a_{k+2}, \dots, a_{2k+1}$, which we will do inductively. For this we note that, with $k+2 \le j \le 2k+1$, equation \eqref{eq-fracsum} implies that
\begin{multline*}
    a_1 \frac{\prod_{\substack{I \not\ni 1 \\ I \not\subseteq \{1, \dots, j-1\}}} g_I }{\prod_{1 \in I \subseteq \{1, \dots, j-1\}}g_I} + \cdots + a_{j-1} \frac{\prod_{\substack{I \not\ni j-1 \\ I \not\subseteq \{1, \dots, j-1\}}}g_I}{\prod_{j-1 \in I \subseteq \{1, \dots, j-1\}}g_I} \\
    + a_j \prod_{\substack{I \not\ni j \\ I \not\subseteq \{1, \dots, j-1\}}} g_I + \cdots + a_{2k+1} \prod_{\substack{I \not\ni 2k+1 \\ I \not \subseteq \{1, \dots, j-1\}}} g_I = n \prod_{I \not\subseteq\{1 \dots, j-1\}} g_I,
\end{multline*}
on multiplying by $\prod_{I \not\subseteq\{1 \dots, j-1\}} g_I$; note that the sum of the first $j-1$ terms must be an integer.
Reducing modulo $\prod_{\substack{J \ni j \\ J \cap \{j+1, \dots, 2k+1\} = \emptyset}} g_J$ implies that
\begin{multline*}
    a_1 \frac{\prod_{\substack{I \not\ni 1 \\ I \not\subseteq \{1, \dots, j-1\}}} g_I }{\prod_{1 \in I \subseteq \{1, \dots, j-1\}} g_I } + \cdots + a_{j-1} \frac{\prod_{\substack{I \not\ni j-1 \\ I \not\subseteq \{1, \dots, j-1\}}} g_I }{\prod_{j-1 \in I \subseteq \{1, \dots, j-1\}} g_I } \\
    \equiv -a_j \prod_{\substack{I \not\ni j \\ I \not\subseteq \{1, \dots, j-1\}}} g_I \pmod{\prod_{\substack{J \ni j\\ J\cap \{j+1,\ldots,2k+1\}= \emptyset}}g_J}.
\end{multline*}

We claim that the coefficient of $a_j$ is relatively prime to the modulus. Indeed, if $I$ is any set satisfying $I \not\ni j$ and $I \not\subseteq \{1, \dots, j-1\}$ (or equivalently $I \cap \{j+1, \dots, 2k+1\} \neq \emptyset$), and $J$ is any set satisfying $J \ni j$ and $J \cap \{j+1, \dots, 2k+1\} = \emptyset$, then $I \not\subseteq J$ (since $I$ contains some element of $\{j+1, \dots, 2k+1\}$ and $J$ does not) and $J \not\subseteq I$ (since $J$ contains $j$ and $I$ does not). Thus for any $I$ and $J$ satisfying these conditions, $\ogcd{g_I}{g_J} = 1$, and in turn
\begin{equation*}
    \bracgcd{\prod_{\substack{I \not\ni j \\ I \not\subseteq \{1, \dots, j-1\}}} g_I}{\prod_{\substack{J \ni j\\ J\cap \{j+1,\ldots,2k+1\}= \emptyset}}g_J} = 1.
\end{equation*}

It follows that, once the $a_i$ are known for $i < j$, $a_j$ is determined modulo $\prod_{\substack{J \ni j \\ J \cap \{j+1, \ldots,2k+1\}=\emptyset}}g_J$. In this case the cost to determine $a_j$ is 
\begin{equation*}
    \ll \prod_{\substack{I \ni j \\ I \cap \{j+1, \ldots, 2k+1\}\neq \emptyset}} G_I.
\end{equation*}
Multiplying this for all $j \in Y$, the total cost to determine all remaining $a_i$ is therefore
\begin{equation*}
    \ll \prod_{\substack{I \\ |I \cap Y| \ge 2}} G_I^{|I \cap Y| - 1}.
\end{equation*}
This concludes the proof of step three, and hence the proof of Theorem~\ref{th-oddsum}.

\section{Odd moments}\label{sec-apps}
We will now use Theorem~\ref{th-oddsum} to prove Theorems \ref{thm:montgomery-vaughan-odd} and \ref{thm:conjecture-for-Rkh-for-k-odd}. These proofs follow along similar lines to those from \cite{MV86} and \cite{MS04} respectively, with several technical refinements; the significant difference is our use of Theorem~\ref{th-oddsum}. 

\subsection{Proof of Theorem \ref{thm:montgomery-vaughan-odd}}
We introduce some definitions now to make the statements clearer. By \cite[Lemma 2]{MV86} we can write $M_k(q,h) =  q(\frac{\phi(q)}{q})^k V_{k}(q,h)$, where
\begin{align}
V_k(q,h) &:= \subsum{q_1, \dots, q_k \\ 1 < q_i|q} \left(\prod_{i=1}^k \frac{\mu(q_i)}{\phi(q_i)}\right)\subsum{a_1, \dots, a_k \\ 1 \le a_i \le q_i \\ \ogcd{a_i}{q_i} = 1 \\ \sum a_i/q_i \in \mathbb Z} \sum_{1 \le d_1, \ldots, d_k \le h} e\left(\sum_{i=1}^k \frac{d_ia_i}{q_i}\right) \label{eq-defn-of-Vkqh} \\
&= \subsum{q_1, \dots, q_k \\ 1 < q_i|q} \left(\prod_{i=1}^k \frac{\mu(q_i)}{\phi(q_i)}\right)\subsum{a_1, \dots, a_k \\ 1 \le a_i \le q_i \\ \ogcd{a_i}{q_i} = 1 \\ \sum a_i/q_i \in \mathbb Z} E\left(\frac{a_1}{q_1}\right) \cdots E\left(\frac{a_k}{q_k}\right),\nonumber
\end{align}
where $E(x) := \sum_{m=1}^h e(mx)$, with $e(u) = e^{2\pi i u}$. We note that $\lvert E(x)\rvert \leq F(x)$ for all $x\in \mathbb{R}$, with \begin{equation*}
F(x) := \min\{h,\|x\|^{-1}\},
\end{equation*}
where $\norm{x}$ denotes the distance from $x$ to the nearest integer. The overall strategy in proving Theorem \ref{thm:montgomery-vaughan-odd} is as follows:
\begin{enumerate}
\item First note that $M_k(q,h)$ is multiplicative in $q$, and hence we can write $q=q_Sq_R$, where $q_S$ is only divisible by primes $\leq h^k$ and $q_R$ is only divisible by primes $>h^k$, and use different methods to bound $M_k(q_R,h)$ and $M_k(q_S,h)$ separately.
\item The `smooth' component $q_S$ is controlled by Theorem~\ref{th-oddsum} (after converting $M_k$ to $V_k$ as above and applying the triangle inequality), noting that the $F(a_i/q_i)$ is essentially $h$ multiplied by an indicator function restricting $a_i/q_i$ to an interval of width $\ll 1/h$. Some care must be taken to ensure that the logarithmic term is logarithmic only in $h$, however, since naively the denominators considered may range up to $\prod_{p\leq h^k}p\approx e^{h^k}$. To this end, we follow \cite{MV86} and employ a further separation device to factorise each denominator into a small part and a large part, and apply Theorem~\ref{th-oddsum} to only the contribution from small denominators.
\item The `rough' component $q_R$ is handled by a combinatorial device, in a similar fashion to \cite{MV86}, although for our desired level of precision the argument is slightly more involved. 
\end{enumerate}

\subsubsection{The smooth component} We first explain how Theorem~\ref{th-oddsum} is used to bound $M_k(q,h)$ when $q$ is `smooth' (i.e.~ has only small prime divisors). Theorem~\ref{th-oddsum} is employed via the following technical lemma.
\begin{lemma}\label{lem:bounds-on-s-i-sums-with-alpha-shifts}
Let $h \ge 1$ be an integer, let $k \ge 3$ be an odd integer, and fix any rational $\alpha_1, \dots, \alpha_k \in [0,1)$. Suppose $2\leq y\leq h$. For each $i$, let $\ell_i = 1$ if $\alpha_i = 0$ and let $\ell_i = h$ otherwise. Then
\begin{equation*}
    \sum_{\substack{s_1, \dots, s_k \\ \ell_i < s_i < y}} \frac{\mu(s_i)^2}{\phi(s_i)} \sum_{\substack{a_1, \dots, a_k \\ 1 \le a_i \le s_i \\ \ogcd{a_i}{s_i} = 1 \\ \sum_i a_i/s_i \in \mathbb Z}} \prod_{i=1}^k F\left(\frac{a_i}{s_i} + \alpha_i\right) \ll (\log y)^{O(1)}h^{\tfrac{k-1}{2}}.
\end{equation*}
\end{lemma}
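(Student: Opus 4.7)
The plan is to combine a dyadic decomposition of the summation variables with an application of Theorem~\ref{th-oddsum}. First, notice that if $\alpha_i\neq 0$ for some $i$ then by definition $\ell_i=h$, so the range $\ell_i<s_i<y\leq h$ is empty and the sum is vacuous; I may therefore assume $\alpha_i=0$ for every $i$. Since $k\geq 3$ is odd, Theorem~\ref{th-oddsum} applies with $k$ summands (taking $(k-1)/2$ in the role of the theorem's own parameter), and will produce a subset $X\subseteq\{1,\ldots,k\}$ of size $(k+1)/2$.

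Next I would dyadically decompose by requiring $s_i\in[S_i,2S_i)$ and $F(a_i/s_i)\in[T_i,2T_i)$ for dyadic integers $S_i,T_i\geq 1$. The coprimality condition $\ogcd{a_i}{s_i}=1$ together with $s_i\geq 2$ forces $\norm{a_i/s_i}\geq 1/s_i$, whence $T_i\leq s_i<2S_i\leq 2y$, so there are only $(\log y)^{O_k(1)}$ dyadic boxes to consider. On any fixed box, the condition $F(a_i/s_i)\asymp T_i$ is equivalent to $a_i/s_i$ lying in a union of two intervals of total length $\ll 1/T_i$ inside $[0,1]$ (centred near $0$ and near $1$). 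Splitting into $2^k$ sub-cases by choice of sub-interval, the remaining task is to bound the number of tuples $(s_i,a_i)$ with $s_i\asymp S_i$, $\ogcd{a_i}{s_i}=1$, $a_i/s_i\in A_i$ for fixed intervals $A_i\subseteq[-1,1]$ of length $\delta_i\ll 1/T_i$, and $\sum_i a_i/s_i\in\bbz$. Crucially $T_i<2S_i$ automatically guarantees $\delta_i\gg 1/S_i$, so Theorem~\ref{th-oddsum} is directly applicable with $Q_i=2S_i$.

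Theorem~\ref{th-oddsum} then yields the count bound $\ll(\log y)^{O_k(1)}\brac{\prod_{i\in X}\delta_i}\prod_i S_i$. Multiplying by the pointwise weight bounds $\prod_i\mu(s_i)^2/\phi(s_i)\ll(\log\log y)^k/\prod_i S_i$ and $\prod_i F(a_i/s_i)\ll\prod_i T_i$, the contribution from a single box becomes $\ll(\log y)^{O_k(1)}\prod_{i\notin X}T_i$. Summing over dyadic $T_i$, for $i\in X$ there are $O(\log y)$ many scales, while for $i\notin X$ we have $\sum_{T_i\text{ dyadic}}T_i\ll y$; summing over dyadic $S_i$ contributes a further $(\log y)^k$. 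The total is $\ll(\log y)^{O_k(1)}y^{(k-1)/2}\leq(\log y)^{O_k(1)}h^{(k-1)/2}$, using $y\leq h$ and $k-\abs{X}=(k-1)/2$.

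The main technical obstacle is the translation in the second step between the weight $F(a_i/s_i)$ and an interval constraint of the form required by Theorem~\ref{th-oddsum}: one must verify that the intervals can be arranged inside $[-1,1]$ with lengths $\delta_i\geq 1/Q_i$, and that the weight bookkeeping preserves logarithmic (rather than polynomial) dependence on $Q$. Once this is set up, Theorem~\ref{th-oddsum} converts the odd-summand saving directly into the exponent $(k-1)/2$ on $h$; no further substantial ingredients are needed.
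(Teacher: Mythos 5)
Your scheme — dyadic decomposition of the weight $F$ followed by Theorem~\ref{th-oddsum} — is the same as the paper's, and your bookkeeping on a single dyadic box is correct. However, you have reduced the whole proof to a triviality by observing that under the stated hypothesis $y\leq h$, the constraint $\ell_i<s_i<y$ is empty whenever $\alpha_i\neq 0$, and therefore assuming all $\alpha_i=0$. That observation should have been a red flag rather than a shortcut: the lemma introduces the $\alpha_i$ and the $\ell_i$ precisely to handle nonzero shifts, and it is applied later (inside Lemma~\ref{thm:reduced-residues-squarefree-friable-modulus}) with the effective "$y$" of the lemma being $y^{2k}+1$ where $y\geq h$, so that the range $\ell_i<s_i<y^{2k}+1$ is certainly nonempty even when $\alpha_i\neq 0$. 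The condition ``$2\leq y\leq h$'' is thus a misprint (the inequality should run the other way, as is needed for the $(\log Q_1\cdots Q_k)^{O(1)}$ factor from Theorem~\ref{th-oddsum} to collapse to $(\log y)^{O(1)}$ — with $Q_i\ll hy$ this requires $\log(hy)\ll\log y$, i.e.\ $y\gtrsim h$). The paper's proof therefore carries the $\alpha_i$ through: the fractions $a_i/s_i$ are localised to intervals of width $O(n_i/S_i)$ centred near $-\alpha_i$ (mod $1$), not near $0$, and it is this genuinely shifted-interval case that Theorem~\ref{th-oddsum} must absorb.

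The $\alpha_i=0$ reduction also infects your final step: you obtain $(\log y)^{O_k(1)}y^{(k-1)/2}$ and then invoke $y\leq h$ to reach $h^{(k-1)/2}$. The paper instead uses the uniform bound $F\leq h$ (equivalently, $S_i\ll hn_i$, or in your notation $T_i\leq h$ rather than $T_i\leq y$) to obtain $h^{(k-1)/2}$ directly, independently of the relative size of $y$ and $h$. This is the bound you need when $y>h$, which is the relevant regime. Your proposal is salvageable: keep the $\alpha_i$, centre the intervals $A_i$ at $\alpha_i$ (this is still compatible with Theorem~\ref{th-oddsum}), and replace $\sum_{T_i\,\text{dyadic}}T_i\ll y$ by $\sum_{T_i\,\text{dyadic}}T_i\ll h$ using $F\leq h$. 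With those repairs it reproduces the paper's proof.
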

\begin{proof}
The left-hand side can be bounded above by
\begin{equation}\label{eq-lem1}
\ll (\log y)^k \sum_{1 < s_1, \dots, s_k < y} (s_1 \cdots s_k)^{-1} \sum_{\substack{a_1, \dots, a_k \\ 1 \le a_i \le s_i \\ \ogcd{a_i}{s_i} = 1 \\ \sum_i a_i/s_i \in \mathbb Z}} \prod_{i=1}^k F\left(\frac{a_i}{s_i} + \alpha_i\right),
\end{equation}
since $\frac{s}{\phi(s)} \ll \log y$ for any $s \le y$. For $1 \le a_i \le s_i$, we have
\begin{equation*}
F\left(\frac{a_i}{s_i} +\alpha_i \right) = \min\left\{h, \left\|\frac{a_i}{s_i}+\alpha_i\right\|^{-1}\right\},
\end{equation*}
so that $\frac 1{s_i} F\left(\frac{a_i}{s_i}+\alpha_i\right) \le 1/n_i$, where
\begin{equation*}
n_i = \mathrm{max}\left\{\frac{s_i}{h}, \lfloor |\tilde{a_i} + \alpha_i s_i|\rfloor \right\},
\end{equation*}
and $\tilde{a_i}$ is the representative of $a_i$ mod $s_i$ which minimises $|\tilde{a_i} + \alpha_i s_i|$ (so that $|\tilde{a_i} + \alpha_i s_i|\leq s_i/2$). 

The possible values of $n_i$ are thus the set $\left\{\frac{s_i}{h}\right\} \cup \left(\mathbb Z \cap \left[\frac{s_i}{h},\frac{s_i}{2}\right]\right)$. 
Note that if $s_i/h < 1$, then $1 < s_i < h$, so $\alpha_i = 0$ by assumption. In this case there is no contribution when $n_i = s_i/h$, since that implies $|\tilde{a_i}| \le s_i/h < 1$, and thus $\tilde{a_i} = 0$, which contradicts the requirement that $\ogcd{a_i}{s_i} = 1$. In particular, this means that $n_i \ge 1$ always.

Notice that since $a_i\leq s_i$ and $\lvert \tilde{a_i}+\alpha_is_i\rvert \leq n_i+1$, if we assume that $s_i\in [S_i,2S_i]$ say, there must exist some interval $I_i$, independent of $a_i$ and $s_i$, of length $O(n_i/S_i)$, which contains $a_i/s_i$. It follows from the discussion thus far that \eqref{eq-lem1} is bounded above by 
\[(\log y)^{O(1)}\sum_{\substack{n_1,\ldots,n_k\\ 1\leq n_i\leq y}}\prod_{i=1}^kn_i^{-1}\sup_{\substack{S_1,\ldots,S_k\\ 1\leq S_i\leq 2hn_i}} \Bigl(\sum_{\substack{s_1, \dots, s_k \\s_i\in [S_i,2S_i]}}\sum_{\substack{a_1, \dots, a_k \\ a_i/s_i \in I_i \\ \ogcd{a_i}{s_i} = 1 \\ \sum_i a_i/s_i \in \mathbb Z}}1\Bigr). \]
Noting that the term inside the brackets depends only on the size of each $n_i$, we can dyadically pigeonhole this sum according to the size of $n_i$ (at an additional cost of $(\log y)^{O(1)}$, and so it suffices to show that 

\[ \sup_{\substack{N_1,\ldots,N_k\\ 1\leq N_i\leq 2y}}\prod_{i=1}^kN_i^{-1}\sup_{\substack{S_1,\ldots,S_k\\ 1\leq S_i\leq 4hN_i}} \Bigl(\sum_{\substack{s_1, \dots, s_k \\s_i\in [S_i,2S_i]}}\sum_{\substack{a_1, \dots, a_k \\ a_i/s_i \in I_i \\ \ogcd{a_i}{s_i} = 1 \\ \sum_i a_i/s_i \in \mathbb Z}}1\Bigr)\ll (\log y)^{O(1)}h^{\lfloor k/2\rfloor},\]
where each $I_i$ is an interval of length $O(N_i/S_i)$ centred at $\alpha_i$. By Theorem~\ref{th-oddsum} the term inside the brackets is, for some $X\subseteq \{1,\ldots,k\}$ of size $\abs{X}=\lceil k/2\rceil$, 
\[\ll (\log y)^{O(1)}\prod_{i\in X}N_i\prod_{j\not\in X}S_j\ll (\log y)^{O(1)}h^{\lfloor k/2\rfloor} N_1\cdots N_k,\]
since each $S_i\ll hN_i$. This concludes the proof.
\end{proof}

In certain regimes the following more direct bound is more effective.
\begin{lemma}\label{lem:bounds-on-r-i-sums-no-Fs}
For a squarefree integer $q$ and a fixed integer $k \ge 1$,
\begin{equation*}
    \sum_{r_1, \dots, r_k |q} \frac{\mu(r_i)^2}{\phi(r_i)} \sum_{\substack{b_1, \dots, b_k \\ 1 \le b_i \le r_i \\ \ogcd{b_i}{r_i} = 1 \\ \sum_i b_i/r_i \in \mathbb Z }}1 \le \prod_{p|q} \left(1 + \frac{2^{k}}{p-1}\right).
\end{equation*}
\end{lemma}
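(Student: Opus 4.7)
The plan is to exploit multiplicativity over the prime divisors of $q$, combined with the Chinese Remainder Theorem and a direct count of the linear constraint modulo each prime $p \mid q$.

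Since $q$ is squarefree and $\mu(r_i)^2 \neq 0$, each $r_i$ is a squarefree divisor of $q$, and hence determined by the set of primes dividing it. For each prime $p \mid q$, introduce $T_p := \{i : p \mid r_i\} \subseteq \{1,\ldots,k\}$; conversely, specifying $T_p$ for every $p \mid q$ uniquely determines the tuple $(r_1,\ldots,r_k)$. By CRT, for each $i$ a residue $b_i \in (\mathbb{Z}/r_i\mathbb{Z})^*$ corresponds bijectively to a tuple $(b_i^{(p)})_{p : i \in T_p}$ with $b_i^{(p)} \in (\mathbb{Z}/p\mathbb{Z})^*$. Setting $D = \mathrm{lcm}(r_1,\ldots,r_k)$ and clearing denominators, $\sum_i b_i/r_i \in \mathbb{Z}$ becomes a congruence modulo $D$, which by CRT again factors into one congruence modulo each $p \mid D$; keeping only the surviving terms (those $i$ with $p \mid r_i$), for each $p \mid q$ the condition reduces to
\[\sum_{i \in T_p} c_{p,i}\, b_i^{(p)} \equiv 0 \pmod p,\]
where $c_{p,i} := D/r_i \bmod p$ is a nonzero constant independent of the $b_i^{(p)}$.

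Combining this with the identity $\prod_i \phi(r_i)^{-1} = \prod_{p \mid q} (p-1)^{-|T_p|}$, the left-hand side of the stated inequality factorises as
\[\prod_{p \mid q}\,\sum_{T_p \subseteq \{1,\ldots,k\}} \frac{N(T_p, p)}{(p-1)^{|T_p|}},\]
where $N(T_p,p)$ is the number of tuples $(b_i^{(p)})_{i \in T_p} \in ((\mathbb{Z}/p\mathbb{Z})^*)^{T_p}$ satisfying the above congruence.

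It remains to bound the local factor. If $T_p = \emptyset$ then $N = 1$; if $|T_p| = 1$ the congruence reads $c\, b^{(p)} \equiv 0 \pmod p$ with $c \not\equiv 0$, forcing $b^{(p)} = 0$, which is excluded, so $N = 0$; and if $|T_p| \ge 2$, fixing $|T_p|-1$ of the variables freely in $(\mathbb{Z}/p\mathbb{Z})^*$ determines the last one mod $p$, giving $N(T_p,p) \le (p-1)^{|T_p|-1}$. Summing over $T_p$,
\[\sum_{T_p} \frac{N(T_p,p)}{(p-1)^{|T_p|}} \le 1 + \sum_{\substack{T_p \subseteq \{1,\ldots,k\} \\ |T_p| \ge 2}} \frac{1}{p-1} \le 1 + \frac{2^k}{p-1},\]
which gives the claimed product bound. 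I do not anticipate any serious obstacle here: the main thing to watch is the careful bookkeeping when reducing the global congruence into local ones and verifying that $c_{p,i}$ is indeed invertible mod $p$ (it is, since $p \mid r_i$ implies $p \nmid D/r_i$), after which the count is purely combinatorial.
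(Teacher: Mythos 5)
Your proposal is correct and follows essentially the same approach as the paper: both reduce, via multiplicativity, to a local count at each prime $p\mid q$, and both bound the number of admissible residue tuples by fixing all but one of the $b_i^{(p)}$ and noting the last is then determined modulo $p$. The only difference is stylistic — you make the multiplicativity explicit through the CRT factorisation into the sets $T_p$, whereas the paper asserts multiplicativity and works directly in the case $q = p$ prime.
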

\begin{proof}
The left-hand side is multiplicative in $q$ (since the constraints on the inside sum are independent for each prime $p|r_1 \cdots r_k$), and hence it suffices to prove this when $q$ is a prime, which we will call $p$. Then for all $i$, either $r_i = 1$ or $r_i = p$. Since $\sum_i \frac{b_i}{r_i} \in \mathbb Z$, either none or at least two of the $r_i$'s are equal to $p$. If no $r_i$ is equal to $p$, then $b_1 = \cdots = b_k = 1$, and this term contributes $1$ overall to the sum.

If $j$ of the $r_i$'s are equal to $p$, for $j \ge 2$, then the contribution of these terms to the sum is
\begin{equation*}
\le \binom{k}{j} \frac{\phi(p)^{j-1}}{\phi(p)^j} = \binom{k}{j} \frac 1{\phi(p)}.
\end{equation*}
To see this, note that there are $\binom{k}{j}$ possibilities for which of the $r_i$'s are equal to $p$. If we reorder such that $r_1 = \cdots = r_j = p$ and $r_{j+1} = \cdots = r_k = 1$, then $b_{j+1} = \cdots = b_k = 1$. There are $\le \phi(p)$ options for each of $b_1, \dots, b_{j-1}$, and then the value of $b_j$ is determined by the constraint that $\sum_i b_i/r_i \in \mathbb Z$. This is an inequality rather than an equality because if $\sum_{i=1}^{j-1} b_i/r_i \in \mathbb Z$, then $b_j$ cannot be relatively prime to $r_j$ and also satisfy the sum constraint.

The left-hand side (when $q=p$ is prime) is therefore at most
\begin{equation*}
1 + \frac 1{\phi(p)}\sum_{j=2}^k \binom{k}{j} \le 1 + \frac {2^k}{\phi(p)},
\end{equation*}
and the proof is complete.
\end{proof}

Although our primary interest is $M_k(q,h)$, for the latter stages of the proof for general $q$, it is convenient to bound a more general quantity for smooth $q$: for any integers $k_1,k_2\geq 0$ let
\[M_{k_1,k_2}(q,h) = \sum_{1\leq n\leq q}\brac{\sum_{\substack{n\leq m < n+h\\ \ogcd{m}{q}=1}}1-\frac{\phi(q)}{q}h}^{k_1}\brac{\sum_{\substack{n\leq m < n+h\\ \ogcd{m}{q}=1}}1}^{k_2}.\]
\begin{lemma}\label{thm:reduced-residues-squarefree-friable-modulus}
Let $h \ge 1$. Let $q \ge 1$ be a squarefree integer, and let $y \ge h$ be such that for any prime $p|q$, $p \le y$. Then for any integers $k_1,k_2\geq 0$,
\begin{equation*}
    M_{k_1,k_2}(q,h) \ll (\log y)^{O_{k_1,k_2}(1)}h^{\lfloor \frac{k_1}{2}\rfloor+k_2}q.
\end{equation*}
\end{lemma}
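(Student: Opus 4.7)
The plan is to reduce $M_{k_1,k_2}$ to the standard central moments $M_k$ via a binomial expansion, and then bound those moments by combining the preceding lemmas (and Theorem~\ref{th-oddsum}) with a Montgomery--Vaughan-style separation device. Writing $f(n)=\#\{n\le m<n+h:\ogcd{m}{q}=1\}$ and $\mu_q=\tfrac{\phi(q)}{q}h$, the binomial identity $f(n)^{k_2}=\sum_{j=0}^{k_2}\binom{k_2}{j}(f(n)-\mu_q)^j\mu_q^{k_2-j}$ yields
\[M_{k_1,k_2}(q,h)=\sum_{j=0}^{k_2}\binom{k_2}{j}\mu_q^{k_2-j}M_{k_1+j}(q,h).\]
Since $\mu_q\ll h$ and the arithmetic inequality $k_2-j+\lfloor(k_1+j)/2\rfloor\le k_2+\lfloor k_1/2\rfloor$ holds for all $j\ge 0$, the desired bound reduces to showing $M_k(q,h)\ll(\log y)^{O_k(1)}h^{\lfloor k/2\rfloor}q$ for every $k\ge 0$. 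The cases $k\in\{0,1\}$ are immediate ($M_0=q$, $M_1=0$), and for even $k\ge 2$ the classical Montgomery--Vaughan bound from \cite{MV86} already gives $M_k\ll h^{k/2}q$; so the heart of the matter is the odd case $k\ge 3$.

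For odd $k\ge 3$, I would use the identity $M_k(q,h)=q(\phi(q)/q)^kV_k(q,h)$ from \cite[Lemma 2]{MV86} (and $(\phi(q)/q)^k\le 1$) to reduce to showing $|V_k(q,h)|\ll(\log y)^{O(1)}h^{(k-1)/2}$. Applying $|E(x)|\le F(x)$ and the triangle inequality to the defining expression for $V_k$ turns this into a sum over tuples $(q_1,\ldots,q_k)$ of divisors of $q$ and coprime residues $(a_1,\ldots,a_k)$ with $\sum a_i/q_i\in\bbz$, weighted by $\prod\mu(q_i)^2/\phi(q_i)$ and $\prod F(a_i/q_i)$. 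The obstruction is that although every prime divisor of $q_i$ is at most $y$, the $q_i$ themselves can be as large as $q\approx e^y$, so applying the preceding lemmas directly to the whole $q_i$ would lose powers of $\log q$ instead of $\log y$.

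To fix this, following the separation device of \cite{MV86}, I would split $q=q_Sq_L$ with $\ogcd{q_S}{q_L}=1$, where $q_S$ collects the prime divisors of $q$ up to a threshold $z=\Theta(\log h)$ chosen so that $q_S<h$ (possible by the prime number theorem). Each $q_i$ factors uniquely as $q_i=s_it_i$ with $s_i\mid q_S$ and $t_i\mid q_L$, so in particular $s_i<h$ always. By CRT each residue $a_i$ coprime to $q_i$ decomposes as $(a_{s,i},a_{t,i})\in(\bbz/s_i)^\times\times(\bbz/t_i)^\times$ with $a_i/q_i\equiv a_{s,i}/s_i+a_{t,i}/t_i\pmod 1$, and since $\prod s_i$ and $\prod t_i$ are coprime, the constraint $\sum a_i/q_i\in\bbz$ decouples into the simultaneous constraints $\sum a_{s,i}/s_i\in\bbz$ and $\sum a_{t,i}/t_i\in\bbz$. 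Conditioning on $(t_i,a_{t,i})$ and writing $\alpha_i=\{a_{t,i}/t_i\}$, the inner sum over $(s_i,a_{s,i})$ is bounded by a standard dyadic decomposition of the weights $F(a_{s,i}/s_i+\alpha_i)$ into short-interval indicators around $-\alpha_i$, followed by Theorem~\ref{th-oddsum}; this is the same strategy used in the proof of Lemma~\ref{lem:bounds-on-s-i-sums-with-alpha-shifts} and yields $\ll(\log h)^{O(1)}h^{(k-1)/2}$ uniformly in the shifts. The outer sum over $(t_i,a_{t,i})$ with $\sum a_{t,i}/t_i\in\bbz$, now free of any $F$-weight, is bounded by Lemma~\ref{lem:bounds-on-r-i-sums-no-Fs} applied to $q_L$, producing a Mertens product $\prod_{p\mid q_L}\brac{1+2^k/(p-1)}\ll(\log y)^{O(1)}$.

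The hardest part will be calibrating the threshold $z$ so that simultaneously (i) $s_i<h$ always holds (for the Theorem~\ref{th-oddsum}-based inner bound to be applicable), (ii) the Lemma~\ref{lem:bounds-on-r-i-sums-no-Fs} contribution from $q_L$ remains only $(\log y)^{O(1)}$ rather than a power of $y$, and (iii) the CRT decoupling of $\sum a_i/q_i\in\bbz$ into independent $s$- and $t$-constraints is clean, including the boundary cases where some $q_i$ lies entirely in $q_S$ (so $t_i=1$) or entirely in $q_L$ (so $s_i=1$). Controlling logarithmic factors will also require using divisor-function estimates such as $\sum_{s<h}\tau_r(s)^\ell\ll(\log h)^{O(1)}h$ only on average, rather than $\tau(s)\ll s^\epsilon$ pointwise, mirroring the careful bookkeeping in the proof of Theorem~\ref{th-oddsum}.
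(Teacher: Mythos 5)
Your binomial reduction of $M_{k_1,k_2}$ to the central moments $M_m$ is correct, and once the even and small cases are discharged by the Montgomery--Vaughan bound it cleanly reduces matters to $M_k(q,h)\ll(\log y)^{O_k(1)}h^{(k-1)/2}q$ for odd $k\ge 3$; this is a slightly tidier route than the paper's, which instead generalises \cite[Lemma 2]{MV86} to $V_{k_1,k_2}$ and runs the whole argument with both kinds of factor present. The CRT decoupling given $\gcd(\prod s_i,\prod t_i)=1$ is also fine. However, your separation device has a genuine gap, and difficulty (i) on your list is aimed in the wrong direction: the argument does not need $s_i<h$, it needs $s_i>h$ exactly when the shift $\alpha_i$ is nonzero, and $s_i>1$ whenever $q_i>1$. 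That is the hypothesis $\ell_i<s_i$ in Lemma~\ref{lem:bounds-on-s-i-sums-with-alpha-shifts}, which in its proof is precisely what forces $n_i\ge 1$. If $s_i<h$ and $\alpha_i\ne 0$, then $|\tilde a_i+\alpha_is_i|$ can lie in $(0,1)$, giving $n_i=s_i/h<1$; equivalently, in the appeal to Theorem~\ref{th-oddsum}, the interval of length $\asymp n_i/s_i$ containing $c_i/s_i$ is shorter than $1/s_i$, violating $\delta_i\ge 1/Q_i$. Without this hypothesis the conclusion of Lemma~\ref{lem:bounds-on-s-i-sums-with-alpha-shifts} is simply false: take $s_1=\cdots=s_k=1$ and $\alpha_1,\ldots,\alpha_k$ all within $O(1/h)$ of integers with $\sum_i\alpha_i\in\bbz$; the left side is then $\prod_iF(\alpha_i)\asymp h^k$.

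Your fixed split $q=q_Sq_L$ with $q_S<h$ forces $s_i\mid q_S<h$ for every $i$, so $s_i>h$ never holds, while $\alpha_i=a_{t,i}/t_i$ is nonzero exactly when $q_i$ has a prime factor $>z$; moreover $s_i=1$ whenever $q_i\mid q_L$ even though $q_i>1$. No calibration of $z$ fixes this: for any threshold $z$ there are divisors $q_i=p_1p_2\mid q$ with $p_1\le z<p_2\le y$, giving $1<s_i=p_1<h$ with $\alpha_i\ne 0$. The paper's separation is instead \emph{tuple-dependent}: for each $(q_1,\ldots,q_k)$ and each $i$ with $q_i>h$, a divisor $d_i\mid q_i$ with $h<d_i\le y^2$ is built greedily (possible since every prime factor is $\le y$), and then one sets $s_i=\gcd\bigl(\prod_{q_j>h}d_j\prod_{q_j\le h}q_j,\ q_i\bigr)$ and $r_i=q_i/s_i$. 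This gives $s_i\ge d_i>h$ whenever $q_i>h$, and $s_i=q_i$, $r_i=1$ (hence $\alpha_i=0$) whenever $q_i\le h$, while retaining $s_i\le y^{2k}$ for the $(\log y)$-control and $\gcd(\prod s_i,\prod r_i)=1$ for the CRT decoupling. Your proof would need to be rebuilt around a split of this adaptive kind rather than a single prime threshold.
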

\begin{proof}
Generalising the proof of \cite[Lemma 2]{MV86} we can write $M_k(q,h) =  q(\frac{\phi(q)}{q})^k V_{k_1,k_2}(q,h)$, where (writing $k=k_1+k_2$ for brevity)
\begin{equation*}
V_{k_1,k_2}(q,h) := \subsum{q_1, \dots, q_{k_1} \\ 1 < q_i|q} \subsum{q_{k_1+1}, \dots, q_{k} \\ 1 \leq q_i|q} \left(\prod_{i=1}^k \frac{\mu(q_i)}{\phi(q_i)}\right)\subsum{a_1, \dots, a_k \\ 1 \le a_i \le q_i \\ \ogcd{a_i}{q_i} = 1 \\ \sum_i a_i/q_i \in \mathbb Z} E\left(\frac{a_1}{q_1}\right) \cdots E\left(\frac{a_k}{q_k}\right).
\end{equation*}
By the triangle inequality and the fact that $|E(x)| \le F(x)$ for all $x \in \mathbb R$, we immediately have that
\begin{equation*}
|V_{k_1,k_2}(q,h)| \le \subsum{q_1, \dots, q_{k_1} \\ 1 < q_i|q} \subsum{q_{k_1+1}, \dots, q_{k} \\ 1 \leq q_i|q} \prod_{i=1}^k \frac{\mu(q_i)^2}{\phi(q_i)}\subsum{a_1, \dots, a_k \\ 1 \le a_i \le q_i \\ \ogcd{a_i}{q_i} = 1 \\ \sum_i a_i/q_i \in \mathbb Z} F\left(\frac{a_1}{q_1}\right) \cdots F\left(\frac{a_k}{q_k}\right).
\end{equation*}

We claim that for each tuple $q_1, \dots, q_k$ in this sum there exist tuples $s_1, \dots, s_k$ and $r_1, \dots, r_k$ such that 
\begin{enumerate}
\item $q_i = s_ir_i$ for all $i$,
\item $s_i \leq y^{2k}$ for all $i$ and $1<s_i$ for $1\leq i\leq k_1$,
\item $\bracgcd{\prod_{i=1}^k s_i}{\prod_{i=1}^k r_i} = 1$, and 
\item whenever $q_i > h$, then $s_i > h$ as well, and 
whenever $q_i \le h$, we have $s_i = q_i$ and $r_i = 1$.
\end{enumerate}
To prove this, we construct $s_1, \dots, s_k$ and $r_1, \dots, r_k$ as follows. For each $i$ with $q_i \le h$, set $s_i = q_i$ and $r_i = 1$. For each $i$ with $q_i > h$, we claim that $q_i$ has a divisor $d_i$ with $h < d_i \leq y^2$. Since each prime factor of $q_i$ is less than $y$, $d_i$ can be constructed greedily: if $d \leq h$ and $d|q_i$, then there exists a prime factor $p|q_i$ with $p\nmid d$, in which case $pd \leq y^2$. Iterating this process yields a divisor $d_i|q_i$ with $h < d_i\leq y^2$. We then set 
\begin{equation*}
s_i = \bracgcd{\prod_{\substack{1 \le j \le k \\ q_j > h}} d_j \prod_{\substack{1 \le j \le k \\ q_j \le h}} q_j}{q_i}\quad\textrm{and}\quad r_i=\frac{q_i}{s_i},
\end{equation*}
so that by definition of the $d_i$ and since each $q_i$ is squarefree, $h < d_i \le s_i$,  and $s_i \le \prod_{\substack{1 \le j \le k \\ q_j > h}} d_j \prod_{\substack{1 \le j \le k \\ q_j \le h}} q_j \le y^{2k}$. The only remaining thing to check is that for all $i,j$ we have $\ogcd{s_i}{r_j}=1$. By definition of $r_j$ (and since $q_j$ is squarefree) it suffices to note that by definition if $p\mid s_i$ and $p\mid q_j$ then $p\mid s_j$.

Note that any fraction of the shape $\frac{a_i}{q_i}$ (with $\ogcd{a_i}{q_i}=1$) can be written uniquely as $\frac{b_i}{r_i}+\frac{c_i}{s_i}$ with $\ogcd{b_i}{r_i}=\ogcd{c_i}{s_i}=1$. We thus replace the sum over $q_1, \dots, q_k$ by a sum over all such $r_1, \dots, r_k$ and $s_1, \dots, s_k$. This can only increase the value of the sum. In particular,
\[|V_{k_1,k_2}(q,h)| \le \subsum{r_1, \dots, r_k|q} \prod_{i=1}^k \frac{\mu(r_i)^2}{\phi(r_i)}\subsum{b_1, \dots, b_k \\ 1 \le b_i \le r_i \\ \ogcd{b_i}{r_i} = 1\\ \sum_i b_i/r_i \in \mathbb Z} W(\tfrac{b_1}{r_1},\ldots,\tfrac{b_k}{r_k})\]
where
\[W(\tfrac{b_1}{r_1},\ldots,\tfrac{b_k}{r_k})=\subsum{s_1, \dots, s_k \\ s_i \leq y^{2k}\\ r_i>1\Rightarrow s_i>h} \brac{\prod_{1\leq i\leq k_1}1_{s_i>1}}\prod_{i=1}^k \frac{\mu(s_i)^2}{\phi(s_i)}\subsum{c_1, \dots, c_k \\ 1 \le c_i \le s_i \\ \ogcd{c_i}{s_i} = 1 \\ \sum_i c_i/s_i \in \mathbb Z} \prod_{i=1}^k F\left(\frac{c_i}{s_i} + \frac{b_i}{r_i}\right).\]
By Lemma \ref{lem:bounds-on-s-i-sums-with-alpha-shifts}, for any $k' \ge k_1$, the contribution to $W(\tfrac{b_1}{r_1},\ldots,\tfrac{b_k}{r_k})$ from those tuples with $k'$ of the $s_i$ being $>1$ is $\ll (\log y)^{O(1)}h^{\lfloor \frac{k'}{2}\rfloor+k-k'}$ (where we have used the trivial bound $F\leq h$ for those $s_i=1$). Since $k=k_1+k_2$ this is $\ll (\log y)^{O(1)}h^{\lfloor \frac{k_1}{2}\rfloor+k_2}$. Thus
\begin{align*}
|V_{k_1,k_2}(q,h)| &\ll (\log y)^{O(1)} h^{\lfloor \frac{k_1}{2}\rfloor+k_2}\subsum{r_1, \dots, r_k|q} \prod_{i=1}^k \frac{\mu(r_i)^2}{\phi(r_i)}\subsum{b_1, \dots, b_k \\ 1 \le b_i \le r_i \\ \ogcd{b_i}{r_i} = 1\\ \sum b_i/r_i \in \mathbb Z} 1 \\
&\ll (\log y)^{O(1)}h^{\lfloor \frac{k_1}{2}\rfloor+k_2}\prod_{p|q}\left(1 + \frac{2^k}{p-1}\right),
\end{align*}
by Lemma \ref{lem:bounds-on-r-i-sums-no-Fs}. Since $p|q$ only if $p \le y$, we have
\begin{equation*}
\prod_{p|q} \left(1 + \frac{2^k}{p-1}\right) \ll \left(\frac{q}{\phi(q)}\right)^{2^k} \ll (\log y)^{2^k},
\end{equation*}
which completes the proof.
\end{proof}

\subsubsection{The rough component} 

To complete the proof of Theorem~\ref{thm:montgomery-vaughan-odd} we use an argument of Montgomery and Vaughan \cite{MV86} to handle the `rough' component of $q$, which is divisible by only large primes $>y$. To this end, set $y = h^k$, and for any squarefree $q$, let $q=q_1q_2$ where $p\leq y$ for all primes $p\mid q_1$ and $p>y$ for all primes $p\mid q_2$. Let $P_1=\phi(q_1)/q_1$ and $P_2=\phi(q_2)/q_2$. As in \cite[Section 7]{MV86} we have
\begin{align*}
M_k(q,h) 
&= \sum_{1\leq n_1\leq q_1}\sum_{1\leq n_2\leq q_2}(P_2D_1(n_1)+D_2(n_1,n_2))^k\\
&=\sum_{k_1+k_2=k}\binom{k}{k_1}P_2^{k_1}\sum_{1\leq n_1\leq q_1}D_1(n_1)^{k_1}\sum_{1\leq n_2\leq q_2}D_2(n_1,n_2)^{k_2}
\end{align*}
where
\[D_1(n_1) = \sum_{1\leq m\leq h}1_{\ogcd{m+n_1}{q_1}=1}-hP_1\]
and
\[D_2(n_1,n_2)=\sum_{1\leq m\leq h}1_{\ogcd{m+n_1}{q_1}=1}1_{\ogcd{m+n_2}{q_2}=1}-P_2\sum_{1\leq m\leq h}1_{\ogcd{m+n_1}{q_1}=1}.\]
To conclude the proof of Theorem~\ref{thm:montgomery-vaughan-odd} therefore, it suffices to show that, for each $k_1,k_2\geq 0$ with $k_1+k_2=k$,
\begin{equation}\label{eq-dbound}
\sum_{1\leq n_1\leq q_1}D_1(n_1)^{k_1}\sum_{1\leq n_2\leq q_2}D_2(n_1,n_2)^{k_2}\ll (\log h)^{O(1)}qP_2^{-k_1}\brac{(hP_2)^{\frac{k-1}{2}}+hP_2}.
\end{equation}
(Here we have used $P_1\geq (\log h)^{-O(1)}$ to simplify the right-hand side.) We henceforth fix such $k_1$ and $k_2$. We require the following lemma to control the sum over $D_2$, which is a combination of \cite[Lemmas 9 and 10]{MV86}. 
\begin{lemma}\label{lem-mv2}
Let $q\geq 1$ and $P=\phi(q)/q$. Let $A$ be a set of at most $h$ integers, such that for each prime divisor $p\mid q$ the integers in $A$ are distinct modulo $p$, and $p\geq y$. For any $k\geq 1$,
\[\sum_{1\leq n\leq q}\brac{\sum_{m\in A}1_{\ogcd{m+n}{q}=1}-\abs{A}P}^k = q\sum_{1\leq s\leq \left\lfloor \frac k2\right\rfloor}F_{k,s}(P)\abs{A}^s+O\brac{q\frac{(hP)^k+hP}{y}},\]
where $F_{k,s}(P)$ are polynomials in $P$ of degree at most $k$ with $\abs{F_{k,s}(P)}\ll P^s$. 
\end{lemma}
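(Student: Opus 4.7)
The plan is to follow the classical expansion strategy of Montgomery and Vaughan. Writing $\phi_n(m) := \ind{\ogcd{m+n}{q}=1}$ and $f(n) := \sum_{m \in A}(\phi_n(m) - P)$, I would begin by expanding
\[\sum_{1 \le n \le q} f(n)^k = \sum_{(m_1, \dots, m_k) \in A^k} \sum_{1 \le n \le q} \prod_{i=1}^k (\phi_n(m_i) - P),\]
and computing the basic building block $\sum_n \prod_{i \in T} \phi_n(m_i)$ for each $T \subseteq \{1,\ldots,k\}$. By M\"obius inversion combined with the Chinese remainder theorem, this factors over primes $p\mid q$ and equals $q \prod_{p\mid q}(1 - s_T/p) =: q\alpha_{s_T}$, where $s_T := \abs{\{m_i : i \in T\}}$ is the number of distinct values among the chosen $m_i$'s. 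The hypotheses that $A$ is distinct modulo each $p\mid q$ and that $p\ge y>k$ (so $s_T\le k<p$) are essential: they ensure the number of residues modulo $p$ occupied by $\{m_i\}_{i\in T}$ equals exactly $s_T$.

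Next, I would group tuples $(m_1, \dots, m_k)\in A^k$ by their equality pattern $\Pi$, a set partition of $\{1,\ldots,k\}$ with $s(\Pi)$ blocks. The number of tuples of pattern $\Pi$ is the falling factorial $\abs{A}^{\underline{s(\Pi)}} := \abs{A}(\abs{A}-1)\cdots(\abs{A}-s(\Pi)+1)$, so
\[\sum_n f(n)^k = q \sum_\Pi \abs{A}^{\underline{s(\Pi)}}\, \Phi_\Pi, \qquad \Phi_\Pi := \sum_{T \subseteq \{1,\ldots,k\}} (-P)^{k-\abs{T}}\, \alpha_{s_T(\Pi)},\]
where $s_T(\Pi)$ is the number of blocks of $\Pi$ meeting $T$. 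The key quantitative input is the approximation $\alpha_s = P^s + O_k(P^s/y)$, which follows prime-by-prime from $(1-s/p)/(1-1/p)^s = 1 + O(s^2/p^2)$ combined with $p\ge y$. Let $\widetilde{\Phi}_\Pi$ denote the polynomial in $P$ obtained from $\Phi_\Pi$ by replacing each $\alpha_s$ with $P^s$.

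A direct telescoping calculation shows $\widetilde{\Phi}_\Pi=0$ whenever $\Pi$ has a singleton block $\{j\}$: pairing $T\not\ni j$ with $T\cup\{j\}$, the two contributions cancel identically because $(-P)^{k-\abs T}P^{s_T}+(-P)^{k-\abs T-1}P^{s_T+1}=0$. Hence only partitions whose blocks all have size $\ge 2$ contribute to the main term, and these satisfy $s(\Pi)\le\lfloor k/2\rfloor$. For such $\Pi$, each monomial $(-1)^{k-\abs T}P^{k-\abs T+s_T(\Pi)}$ in $\widetilde{\Phi}_\Pi$ has degree at least $s(\Pi)$ (the minimum is achieved at $T=\{1,\ldots,k\}$, and the exponent is monotone nonincreasing as $T$ grows), so $\widetilde{\Phi}_\Pi$ is a polynomial in $P$ of degree $\le k$ divisible by $P^{s(\Pi)}$; in particular $\abs{\widetilde\Phi_\Pi}\ll_k P^{s(\Pi)}$. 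Expanding $\abs{A}^{\underline{s(\Pi)}}$ as a polynomial in $\abs{A}$ via (signed) Stirling numbers and collecting like monomials across all such $\Pi$ produces the main term $q\sum_{1\le s\le\lfloor k/2\rfloor}F_{k,s}(P)\abs{A}^s$ with the required properties.

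Finally, the error from replacing $\Phi_\Pi$ by $\widetilde{\Phi}_\Pi$: since $\alpha_s-P^s = O_k(P^s/y)$ and the exponent $k-\abs T+s_T(\Pi)$ in each term of $\Phi_\Pi-\widetilde\Phi_\Pi$ is minimized at $T=\{1,\ldots,k\}$ with minimum value $s(\Pi)$, we get the uniform bound $\abs{\Phi_\Pi-\widetilde\Phi_\Pi}\ll_k P^{s(\Pi)}/y$ for every partition $\Pi$. Summing the error contribution $\abs{A}^{\underline{s(\Pi)}}\cdot O_k(P^{s(\Pi)}/y)\ll_k (\abs{A}P)^{s(\Pi)}/y\le (hP)^{s(\Pi)}/y$ over the (finitely many) partitions $\Pi$ and using the elementary inequality $x^s\le x+x^k$ valid for $1\le s\le k$ and $x\ge 0$, we obtain total error $\ll_k q((hP)^k+hP)/y$. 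The central technical obstacle is the $\Pi$-uniform bound $\abs{\Phi_\Pi-\widetilde\Phi_\Pi}\ll P^{s(\Pi)}/y$: getting the exponent $s(\Pi)$ rather than a crude $O(1)$ requires tracking how the minimal-degree monomial in each $\Phi_\Pi$ always contributes exactly a factor $P^{s(\Pi)}$, which in turn depends on the specific algebraic structure of the partition expansion and on the sharpness of the prime-by-prime estimate for $\alpha_s$.
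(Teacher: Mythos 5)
Your proof is correct. The paper does not give its own proof of this lemma --- it is stated as ``a combination of [Lemmas 9 and 10]{MV86}'' --- and your argument is a correct, self-contained reproduction of the classical Montgomery--Vaughan strategy underlying that reference: expand the $k$th power over tuples, evaluate $\sum_n \prod_{i\in T}\ind{\gcd(m_i+n,q)=1}$ by CRT using the distinctness-mod-$p$ hypothesis, group tuples by equality pattern $\Pi$, observe that the ``singular series'' approximation $\widetilde\Phi_\Pi$ vanishes identically when $\Pi$ has a singleton block (forcing $s(\Pi)\le\lfloor k/2\rfloor$ in the main term), and control the difference $\Phi_\Pi-\widetilde\Phi_\Pi$ via $\alpha_s=P^s(1+O_k(1/y))$ together with the observation that the minimal exponent $k-|T|+s_T(\Pi)$ is always exactly $s(\Pi)$.
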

Using Lemma~\ref{lem-mv2}, if 
\[A=A(n_1)=\{1\leq m\leq h : \ogcd{m+n_1}{q_1}=1\},\]
the left-hand side of \eqref{eq-dbound} is (using the trivial bound $\abs{D_1(n_1)}\ll h$ for the error term)
\[q_2\sum_{1\leq s\leq \lfloor k_2/2\rfloor}F_{k_2,s}(P_2)\sum_{n_1\leq q_1}D_1(n_1)^{k_1}\abs{A(n_1)}^s+O\brac{qh^{k_1}\frac{(hP_2)^{k_2}+hP_2}{y}}.\]
Since $y=h^k$, this error term is acceptable, as (recalling $k=k_1+k_2$)
\[h^{k_1}\frac{(hP_2)^{k_2}+hP_2}{y}=P_2^{k_2}+h^{1-k_2}P_2\ll hP_2.\]
Now note that, by Lemma~\ref{thm:reduced-residues-squarefree-friable-modulus}, 
\[\sum_{n_1\leq q_1}D_1(n_1)^{k_1}\abs{A(n_1)}^s=M_{k_1,s}(q_1,h)\ll (\log y)^{O(1)}h^{\lfloor \frac{k_1}{2}\rfloor+s}q_1.\]
It follows that the left-hand side of \eqref{eq-dbound} is 
\[\ll qhP_2+(\log h)^{O(1)}qh^{\lfloor \frac{k_1}{2}\rfloor}\sum_{1\leq s\leq \lfloor k_2/2\rfloor}F_{k_2,s}(P_2)h^s.\]
(Note that $P_1\leq (\log h)^{O(1)}$ and hence the first error term is dominated by the second summand of the right-hand side of \eqref{eq-dbound}.) Since $F_{k_2,s}(P)\ll P^s$ the sum here is $\ll (hP_2)^{\lfloor \frac{k_2}{2}\rfloor}+hP_2$. The required estimate \eqref{eq-dbound} now follows since
\[h^{\lfloor \frac{k_1}{2}\rfloor}\brac{(hP_2)^{\lfloor \frac{k_2}{2}\rfloor}+hP_2}\ll P_2^{-k_1}\brac{(hP_2)^{\frac{k-1}{2}}+hP_2}.\]
Indeed, if $h\leq P_2^{-1}$ then this is equivalent to
\[h^{\lfloor \frac{k_1}{2}\rfloor}\ll P_2^{-k_1},\]
which holds since $\lfloor k_1/2\rfloor \leq k_1$ for all $k_1\geq 0$, and if $h>P_2^{-1}$ this is equivalent to 

\[h^{\lfloor \frac{k_1}{2}\rfloor+\lfloor \frac{k_2}{2}\rfloor}P_2^{\lfloor \frac{k_2}{2}\rfloor}\ll h^{\frac{k-1}{2}}P_2^{\frac{k_2-k_1-1}{2}}.\]
This holds since $P_2\leq 1$ and $\lfloor k_2/2\rfloor \geq (k_2-1)/2$, and (since $k$ is an odd integer and $k_1+k_2=k$)
\[\lfloor\tfrac{k_1}{2}\rfloor+\lfloor \tfrac{k_2}{2}\rfloor=\frac{k-1}{2}.\]
This completes the proof of Theorem~\ref{thm:montgomery-vaughan-odd}.

\subsection{Proof of Theorem \ref{thm:conjecture-for-Rkh-for-k-odd}}

This proof will follow along the same basic lines as those Montgomery and Soundararajan \cite{MS04} used to estimate $R_k(h)$ when $k$ is even. Our goal is to estimate $R_k(h)$; recall that $R_k(h)$ is defined to be
\[R_k(h) := \sum_{\substack{1\leq d_1,\ldots,d_k\leq h\\ d_i\textrm{ distinct}}}\mathfrak{S}_0(d_1,\ldots,d_k).\]

Throughout this section, $\mathcal D = (d_1, \dots, d_k)$ will denote an ordered $k$-tuple of integers. Recall our convention that, if $\mathcal{J}=\{j_1<\cdots<j_t\}\subseteq \{1,\ldots,k\}$, then $\mathcal{D}_{\mathcal{J}}=(d_{j_1},\ldots,d_{j_t})$ is the $\abs{\mathcal{J}}$-tuple restricted to $\mathcal{J}$. If the elements $d_1, \dots, d_k$ are distinct, we will say that $\mathcal D$ is a \emph{distinct $k$-tuple}. Note that in fact $\mathfrak{S}$ and $\mathfrak{S}_0$ do not depend on the ordering of the tuple, although for technical reasons we find it convenient to work with ordered $k$-tuples throughout.

It is inconvenient that $\mathfrak S(\mathcal D)$ is convergent only if $\mathcal D$ is a distinct $k$-tuple. To overcome this obstacle, we define the singular series at a $k$-tuple $\mathcal D$ with respect to a modulus $q$, as defined in \cite{K21}, by
\begin{equation*}
\mathfrak S(\mathcal D;q) := \prod_{p|q}\left(1-\frac 1p\right)^{-k}\left(1-\frac{\nu_p(\mathcal D)}{p}\right),
\end{equation*}
where as usual $\nu_p(\mathcal D)$ is the number of distinct residue classes mod $p$ occupied by elements of $\mathcal D$. Note that this is a finite Euler product, so it is well-defined even if $\mathcal D$ has repeated elements. More precisely, this definition has the following immediate consequence for repeated elements:
\begin{lemma}\label{lem-S-relative-to-q-repeated-elements}
Let $\mathcal D_1$ be a $k_1$-tuple and let $\mathcal D_2$ be a $k_2$-tuple such that $1 \le k_1 < k_2$. Assume that $\mathcal D_1$ has distinct elements and that every element of $\mathcal D_1$ appears at least once in $\mathcal D_2$ (that is, $\mathcal D_1$ is a tuple whose elements are precisely the \emph{distinct} elements of $\mathcal D_2$ in some ordering). Then
\begin{equation*}
\mathfrak S(\mathcal D_2;q) = \left(\frac{q}{\phi(q)}\right)^{k_2-k_1} \mathfrak S(\mathcal D_1;q). 
\end{equation*}
\end{lemma}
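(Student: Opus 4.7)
The plan is a direct computation from the definition. The crucial observation is that $\nu_p(\mathcal D)$ depends only on the underlying set of values occupied by the tuple $\mathcal D$, not on the multiplicity with which each value appears nor on the ordering. Since by hypothesis the distinct values appearing in $\mathcal D_1$ and $\mathcal D_2$ are identical, we have $\nu_p(\mathcal D_1) = \nu_p(\mathcal D_2)$ for every prime $p$.

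Substituting this identity into the definition of $\mathfrak{S}(\mathcal D_2;q)$, the only way in which $\mathfrak S(\mathcal D_2;q)$ differs from $\mathfrak S(\mathcal D_1;q)$ is in the exponent on the factor $(1-1/p)^{-1}$: the tuple $\mathcal D_2$ contributes exponent $-k_2$ whereas $\mathcal D_1$ contributes exponent $-k_1$. Factoring out the difference yields
\[\mathfrak S(\mathcal D_2;q) = \prod_{p|q}\left(1-\frac{1}{p}\right)^{-(k_2-k_1)} \cdot \prod_{p|q}\left(1-\frac{1}{p}\right)^{-k_1}\left(1-\frac{\nu_p(\mathcal D_1)}{p}\right) = \prod_{p|q}\left(1-\frac{1}{p}\right)^{-(k_2-k_1)} \mathfrak S(\mathcal D_1;q).\]

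To conclude, I would invoke the standard Euler product identity $\prod_{p|q}(1-1/p) = \phi(q)/q$ (valid for any positive integer $q$), which gives $\prod_{p|q}(1-1/p)^{-(k_2-k_1)} = (q/\phi(q))^{k_2-k_1}$, yielding the claimed equality. There is no real obstacle here: the entire content of the lemma is the (immediate) observation that $\nu_p$ ignores multiplicities, together with bookkeeping of the Euler factors. The hypothesis $k_1 \geq 1$ ensures $\mathcal D_1$ is nonempty so that ``distinct elements'' is meaningful, and $k_1 < k_2$ ensures the exponent $k_2 - k_1$ is positive, but neither is actually needed for the algebraic identity itself.
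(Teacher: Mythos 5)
Your proof is correct and matches the paper's (implicit) reasoning: the paper presents this lemma as an immediate consequence of the definition with no further argument, and your observation that $\nu_p$ depends only on the underlying set of distinct residues, so the two tuples differ only in the $(1-1/p)^{-k}$ Euler factor, is exactly that reasoning made explicit.
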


Just as for $\mathfrak S(\mathcal D)$, we also define the refined singular series with respect to a modulus $q$, given by
\begin{equation*}
\mathfrak{S}_0(\mathcal{D}; q) = \sum_{\mathcal{Q}\subseteq \{1,\ldots,k\}}(-1)^{k-\abs{\mathcal{Q}}}\mathfrak{S}(\mathcal{D}_{\mathcal{Q}};q).
\end{equation*}
We will take the convention that $\mathfrak S(\emptyset) = \mathfrak S_0(\emptyset) = \mathfrak S(\emptyset;q) = \mathfrak S_0(\emptyset;q) = 1$ for all $q$. For appropriate choice of $q$, the sums $R_k(h)$ and $V_k(q,h)$ both have clean expansions in terms of the $\mathfrak S_0(\mathcal D;q)$, which we summarise in the following lemma.

\begin{lemma}
Let $h, k \ge 1$. Set $y = h^{k+1}$ and $q = \prod_{p \le y} p$. Then
\begin{equation}\label{eq:Vkqh-in-terms-of-S0q}
V_k(q,h) = \sum_{\substack{1 \le d_1, \ldots, d_k \le h}} \mathfrak S_0(\mathcal D;q),
\end{equation}
and
\begin{equation}\label{eq:Rkh-in-terms-of-S0q}
R_k(h) = \sum_{\substack{1 \le d_1, \ldots, d_k \le h \\ d_i \text{ distinct}}} \mathfrak S_0(\mathcal D;q) + O(1).
\end{equation}
\end{lemma}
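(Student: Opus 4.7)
\emph{Plan for \eqref{eq:Vkqh-in-terms-of-S0q}.} I will expand $E(x)=\sum_{m=1}^h e(mx)$ in the definition of $V_k(q,h)$ and interchange sums so that $(d_1,\ldots,d_k)$ sits on the outside. Matters then reduce to proving, for each fixed $\mathcal{D}$, the identity $\mathfrak{S}_0(\mathcal{D};q)=U(\mathcal{D};q)$, where
\[
U(\mathcal{D};q) := \sum_{\substack{q_1,\ldots,q_k\mid q\\ q_i>1}} \prod_{i=1}^k\frac{\mu(q_i)}{\phi(q_i)} \sum_{\substack{1\leq a_i\leq q_i\\ \ogcd{a_i}{q_i}=1\\ \sum_i a_i/q_i\in\mathbb{Z}}} e\Bigl(\sum_i d_i a_i/q_i\Bigr).
\]
Letting $T(\mathcal{D};q)$ denote the same expression with the constraint $q_i>1$ removed (so $q_i=1$ contributes a trivial factor), a decomposition according to the subset $\mathcal{Q}\subseteq\{1,\ldots,k\}$ of indices on which $q_i>1$ yields $T(\mathcal{D};q)=\sum_{\mathcal{Q}}U(\mathcal{D}_{\mathcal{Q}};q)$, and Möbius inversion on the subset lattice gives $U(\mathcal{D};q)=\sum_{\mathcal{Q}}(-1)^{k-\abs{\mathcal{Q}}}T(\mathcal{D}_{\mathcal{Q}};q)$. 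Comparing this with the definition of $\mathfrak{S}_0$, it suffices to establish $T(\mathcal{D};q)=\mathfrak{S}(\mathcal{D};q)$.

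\emph{Prime-by-prime verification.} Since $q$ is squarefree, both sides factor over the primes $p\mid q$: for $T$ this uses the Bezout partial fraction $1/(q_i'q_i'')=A/q_i'+B/q_i''$ to check that the integrality constraint $\sum a_i/q_i\in\mathbb{Z}$ and the exponential $e(\sum d_i a_i/q_i)$ both split across coprime factorisations of $q$. At a single prime $p$ each $q_i\in\{1,p\}$, so one detects $\sum_{i\in\mathcal{Q}}a_i\equiv 0\pmod p$ by orthogonality $\mathbf{1}[\cdot]=\tfrac{1}{p}\sum_{b\pmod p}e(b\sum a_i/p)$, which turns the inner sums over $a_i\in(\mathbb{Z}/p)^*$ into Ramanujan sums $c_p(d_i+b)\in\{p-1,-1\}$. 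Splitting each $\mathcal{Q}=\mathcal{Q}_1\sqcup\mathcal{Q}_2$ according to whether $d_i\equiv -b\pmod p$ or not, the identity $\sum_{\mathcal{Q}_1\subseteq A}(-1)^{\abs{\mathcal{Q}_1}}=\mathbf{1}[A=\emptyset]$ forces $A:=\{i\in\{1,\ldots,k\}:d_i\equiv -b\pmod p\}=\emptyset$, while the $\mathcal{Q}_2$-sum collapses to $(1+\tfrac{1}{p-1})^k$. The remaining $b$-sum counts residues not hit by any $-d_i$, giving a factor $p-\nu_p(\mathcal{D})$; assembled, this produces $(1-\nu_p(\mathcal{D})/p)(1-1/p)^{-k}=\mathfrak{S}(\mathcal{D};p)$, as needed.

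\emph{Plan for \eqref{eq:Rkh-in-terms-of-S0q}.} I will compare $\mathfrak{S}_0(\mathcal{D})$ with $\mathfrak{S}_0(\mathcal{D};q)$ term-by-term through the factorisation $\mathfrak{S}(\mathcal{D}_{\mathcal{Q}})=\mathfrak{S}(\mathcal{D}_{\mathcal{Q}};q)\prod_{p>y}\mathfrak{S}(\mathcal{D}_{\mathcal{Q}};p)$. Since $\mathcal{D}$ has distinct entries bounded by $h<y<p$, one has $\nu_p(\mathcal{D}_{\mathcal{Q}})=\abs{\mathcal{Q}}$ for every $p>y$, and Taylor-expanding $(1-1/p)^{-s}(1-s/p)=1-\tfrac{s(s-1)}{2p^2}+O_k(1/p^3)$ shows $\mathfrak{S}(\mathcal{D}_{\mathcal{Q}};p)=1+O_k(1/p^2)$; the tail product is thus $1+O_k(1/y)$. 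Hence $\mathfrak{S}_0(\mathcal{D})=\mathfrak{S}_0(\mathcal{D};q)+O_k\brac{y^{-1}\sum_{\mathcal{Q}}\mathfrak{S}(\mathcal{D}_{\mathcal{Q}};q)}$; summing over distinct $\mathcal{D}\in[1,h]^k$ and invoking Gallagher's $\sum_{\mathcal{D}'\in[1,h]^s}\mathfrak{S}(\mathcal{D}')\ll h^s$ (together with $\mathfrak{S}(\mathcal{D}';q)\asymp \mathfrak{S}(\mathcal{D}')$ on distinct tuples), the total error is $\ll_k h^k/y=1/h=O(1)$, since $y=h^{k+1}$. The main obstacle throughout is the bookkeeping in the prime-by-prime identity $T(\mathcal{D};p)=\mathfrak{S}(\mathcal{D};p)$: tracking how the signs from $\mu(p)=-1$, the factors $\phi(p)^{-1}$, and the two values of $c_p$ combine to reproduce both the $(1-1/p)^{-k}$ and the $(1-\nu_p(\mathcal{D})/p)$ factors in $\mathfrak{S}(\mathcal{D};p)$.
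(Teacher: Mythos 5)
Your proposal is correct, but it takes a genuinely different route from the paper. The paper's proof of this lemma consists entirely of citations: equation \eqref{eq:Vkqh-in-terms-of-S0q} is obtained by quoting the expansion $\mathfrak S_0(\mathcal D;q) = \sum_{q_i|q,\, q_i>1}\prod_i \frac{\mu(q_i)}{\phi(q_i)} \sum_{a_i} e(\sum d_ia_i/q_i)$ from \cite[equation (5) and Lemma 2.3]{K21} and matching it against the definition of $V_k(q,h)$, while \eqref{eq:Rkh-in-terms-of-S0q} is quoted directly from \cite[equations (50) and (51)]{MS04}. You, by contrast, reprove both ingredients from scratch: you establish the expansion of $\mathfrak S_0(\mathcal D;q)$ via a Möbius-inversion reduction to the identity $T(\mathcal D;q)=\mathfrak S(\mathcal D;q)$, which you verify prime-by-prime through orthogonality and Ramanujan sums (the computation $\frac{1}{p}\sum_b \prod_i(1-\frac{c_p(d_i+b)}{p-1}) = (1-\nu_p(\mathcal D)/p)(1-1/p)^{-k}$ is exactly right), and you establish \eqref{eq:Rkh-in-terms-of-S0q} by comparing $\mathfrak S_0(\mathcal D)$ with $\mathfrak S_0(\mathcal D;q)$ term-by-term via the tail product $\prod_{p>y}\mathfrak S(\mathcal D_{\mathcal Q};p)=1+O_k(1/y)$ for distinct tuples bounded by $h<y$, then invoking Gallagher to control the accumulated error $\ll_k h^k/y = O(1)$. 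The tradeoff: the paper's route is two lines and leans on prior work (which itself carries these arguments out); your route is self-contained and makes the mechanism visible, which is pedagogically valuable but longer. One cosmetic note: the phrase $\mathfrak S(\mathcal D';q)\asymp\mathfrak S(\mathcal D')$ is unproblematic here precisely because both sides vanish simultaneously (the ratio is $1+O_k(1/y)$), so the one-sided bound $\mathfrak S(\mathcal D';q)\ll\mathfrak S(\mathcal D')$ that you actually need does hold — worth stating as $\ll$ rather than $\asymp$ to avoid a spurious division-by-zero worry.
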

\begin{proof}
The proof of this lemma is routine, depending on known expansions of $\mathfrak S_0$ and $V_k$. By \cite[equation (5) and Lemma 2.3]{K21},
\begin{equation}\label{eq:S0q-expand}
\mathfrak S_0(\mathcal D;q) = \sum_{\substack{q_1, \dots, q_k \\ q_i > 1 \\ q_i|q}} \prod_{i=1}^k \frac{\mu(q_i)}{\phi(q_i)} \sum_{\substack{a_1, \dots, a_k \\ 1 \le a_i \le q_i \\ \ogcd{a_i}{q_i} = 1 \\ \sum_i a_i/q_i \in \mathbb Z}} e\left(\sum_{i=1}^k \frac{d_ia_i}{q_i}\right).
\end{equation}
Equation \eqref{eq:S0q-expand}, along with the definition of $V_k(q,h)$ in \eqref{eq-defn-of-Vkqh}, immediately implies \eqref{eq:Vkqh-in-terms-of-S0q}. We now prove equation \eqref{eq:Rkh-in-terms-of-S0q}. By \cite[equations (50) and (51)]{MS04},
\begin{equation*}
R_k(h) = \sum_{\substack{1 \le d_1, \ldots, d_k \le h \\ d_i \text{ distinct}}} \sum_{\substack{q_1, \dots, q_k \\ q_i > 1 \\ q_i|q}} \prod_{i=1}^k \frac{\mu(q_i)}{\phi(q_i)} \sum_{\substack{a_1,\dots, a_k \\ 1 \le a_i \le q_i \\ \ogcd{a_i}{q_i} = 1 \\ \sum_i a_i/q_i \in \mathbb Z}}e\left(\sum_{i=1}^k \frac{d_ia_i}{q_i} \right) + O(1),
\end{equation*}
when $q = \prod_{p \le y} p$ and $y = h^{k+1}$, which along with \eqref{eq:S0q-expand} immediately implies \eqref{eq:Rkh-in-terms-of-S0q}. 
\end{proof}

Equations \eqref{eq:Vkqh-in-terms-of-S0q} and \eqref{eq:Rkh-in-terms-of-S0q} differ only (yet crucially) in that the $d_i$ in \eqref{eq:Rkh-in-terms-of-S0q} are required to be distinct, unlike in \eqref{eq:Vkqh-in-terms-of-S0q}. Our goal is to relate $R_k(h)$ to a sum over $V_j(q,h)$ for appropriate $j$ and $q$, and then apply Theorem \ref{thm:montgomery-vaughan-odd}. The argument to remove the distinctness condition is in essence a rather involved inclusion-exclusion argument; we will be able to express $R_k(h)$ as a linear combination of $V_j(q,h)$, for different values of $j$.

Set $\delta_{ij}(\mathcal{D}) = 1$ if $d_i = d_j$ and $\delta_{ij}(\mathcal{D}) = 0$ otherwise, so that
\begin{equation*}
1_{d_1,\ldots,d_k\textrm{ are distinct}}=\prod_{1 \le i < j \le k} (1-\delta_{ij}(\mathcal{D})) = \sum_{G\subseteq \binom{k}{2}}(-1)^{\abs{G}}\prod_{(i,j)\in G}\delta_{ij}(\mathcal{D}).
\end{equation*}
Note that the product $\prod_{(i,j)\in G}\delta_{ij}(\mathcal{D})$ in fact depends only on the connected components of the graph $G$, which is a partition of $\{1,\ldots,k\}$. If $\mathcal{P}$ is a partition of $\{1,\ldots,k\}$ then we write $G\sim \mathcal{P}$ to mean that $\mathcal{P}$ lists the connected components of $G$. Therefore, if
\begin{equation*}
\Delta_{\mathcal P}(\mathcal{D}) := \begin{cases} 1&\textrm{ if }d_i=d_j\textrm{ whenever }i,j\textrm{ are in the same cell of the partition }\mathcal{P}\textrm{ and }\\
0&\textrm{otherwise,}\end{cases}
\end{equation*}
then whenever $G\sim \mathcal{P}$ we have $\prod_{(i,j)\in G}\delta_{ij}(\mathcal{D})=\Delta_{\mathcal{P}}(\mathcal{D})$. We have therefore shown that, if 
\begin{equation*}
w(\mathcal P) := \sum_{G\sim \mathcal P} (-1)^{\abs{G}},
\end{equation*}
then 
\[1_{d_1,\ldots,d_k\textrm{ are distinct}}=\sum_{\mathcal{P}}w(\mathcal{P})\Delta_{\mathcal{P}}(\mathcal{D}).\]
In particular, inserting this into \eqref{eq:Rkh-in-terms-of-S0q} yields
\begin{align*}
R_k(h) &= \sum_{\mathcal P} w(\mathcal P) \sum_{\substack{d_1, \dots, d_k \\ 1 \le d_i \le h}}\Delta_{\mathcal{P}}(\mathcal D) \mathfrak S_0(\mathcal D;q)  + O(1).
\end{align*}
The tuples $\mathcal D$ are now (for all except the trivial partition into singletons) \emph{forced} to have repeated elements, but we can remove these repeated elements by applying Lemma \ref{lem-S-relative-to-q-repeated-elements}. However, Lemma \ref{lem-S-relative-to-q-repeated-elements} applies to $\mathfrak S(\mathcal D;q)$ and not $\mathfrak S_0(\mathcal D;q)$, so in preparation for applying Lemma \ref{lem-S-relative-to-q-repeated-elements}, we recall the definition of $\mathfrak{S}_0(\mathcal{D};q)$ to write
\begin{align*}
R_k(h) &=  (-1)^{k}\sum_{\mathcal P} w(\mathcal P) \sum_{\substack{d_1, \dots, d_k \\ 1 \le d_i \le h }}\Delta_{\mathcal{P}}(\mathcal{D})  \sum_{\mathcal Q \subseteq \{1,\ldots,k\}} (-1)^{|\mathcal Q|} \mathfrak S(\mathcal{D}_{\mathcal{Q}};q) + O(1).
\end{align*}
The task now is to rewrite this sum to decouple the dependence between the tuple $\mathcal{D}$ and the partition $\mathcal{P}$. 

For a fixed integer $\ell$, denote by $P_\ell(z)$ the polynomial given by
\begin{equation}\label{eq:Pkz-polynomial-definition}
P_\ell(z) := \frac{(1-z)^\ell-1}{z} = \sum_{j=1}^\ell \binom{\ell}{j} (-1)^{j} z^{j-1},
\end{equation}
so that for example $P_1(z) = -1$, $P_2(z) = z-2$, $P_3(z) = -z^2+3z-3$, and so on. The following lemma shows that $R_k(h)$ can be written in terms of the polynomials $P_\ell(z)$.

\begin{lemma}\label{lem:partitions-to-polynomial-coefficients-S}
Fix a partition $\mathcal P$ of $\{1, \dots, k\}$, and write $\mathcal P = \{S_1, \dots, S_M\}$. Then
\begin{align*}
\sum_{\substack{d_1, \dots, d_k \\ 1\le d_i \le h }}\Delta_{\mathcal{P}}(\mathcal{D}) &\sum_{\mathcal Q\subseteq \{1,\ldots,k\}} (-1)^{|\mathcal Q|} \mathfrak S(\mathcal{D}_{\mathcal Q};q) \\
&= \sum_{\substack{d_1, \dots, d_M \\ 1 \le d_i \le h}} \sum_{\substack{\mathcal J \subseteq \{1, \dots, M\}}}\prod_{m \in \mathcal J} P_{|S_m|}\left(\frac{q}{\phi(q)}\right) \mathfrak S(\mathcal{D}_{\mathcal J};q),
\end{align*}
where $P_\ell(z)$ is given in \eqref{eq:Pkz-polynomial-definition}.
\end{lemma}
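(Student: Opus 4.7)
The plan is to reparameterise the $k$-tuple sum on the left-hand side by the $M$-tuple $(d_1, \ldots, d_M)$ of per-cell common values (so $\Delta_{\mathcal{P}}(\mathcal{D}) = 1$ simply picks out these $M$-tuples), and then to rearrange the $\mathcal{Q}$-sum by grouping $\mathcal{Q}$'s according to which cells they touch. Concretely, for $\mathcal{Q} \subseteq \{1,\ldots,k\}$ define $\mathcal{J}(\mathcal{Q}) := \{m : S_m \cap \mathcal{Q} \neq \emptyset\} \subseteq \{1,\ldots,M\}$. Under the partition constraint, every $d_i$ with $i \in \mathcal{Q}$ equals some $d_m$ with $m \in \mathcal{J}(\mathcal{Q})$ (with multiplicity $\abs{\mathcal{Q} \cap S_m}$), so $\mathcal{D}_{\mathcal{Q}}$ occupies exactly the same residues modulo every prime as the $\abs{\mathcal{J}(\mathcal{Q})}$-tuple $\mathcal{D}_{\mathcal{J}(\mathcal{Q})}$ (with the abuse of notation that here $\mathcal{D}_{\mathcal{J}}$ denotes the sub-tuple of $(d_1, \ldots, d_M)$ indexed by $\mathcal{J}$). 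Reading off the Euler product definition of $\mathfrak{S}(\cdot;q)$, or equivalently iterating Lemma~\ref{lem-S-relative-to-q-repeated-elements}, this gives the identity
\[\mathfrak{S}(\mathcal{D}_{\mathcal{Q}}; q) = \brac{\frac{q}{\phi(q)}}^{\abs{\mathcal{Q}} - \abs{\mathcal{J}(\mathcal{Q})}} \mathfrak{S}(\mathcal{D}_{\mathcal{J}(\mathcal{Q})}; q),\]
which is valid regardless of whether the $d_m$'s are themselves distinct.

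The next step is to substitute this into the left-hand side and collect $\mathcal{Q}$'s by $\mathcal{J}(\mathcal{Q}) = \mathcal{J}$. For a fixed $\mathcal{J} \subseteq \{1,\ldots,M\}$, the $\mathcal{Q}$'s with $\mathcal{J}(\mathcal{Q}) = \mathcal{J}$ are exactly those of the form $\mathcal{Q} = \bigsqcup_{m \in \mathcal{J}} T_m$ with $\emptyset \neq T_m \subseteq S_m$, so the inner sum factorises over $m \in \mathcal{J}$ and each cell contributes
\[\sum_{\emptyset \neq T \subseteq S_m}(-1)^{\abs{T}}\brac{\frac{q}{\phi(q)}}^{\abs{T}-1} = \sum_{j=1}^{\abs{S_m}} \binom{\abs{S_m}}{j}(-1)^j\brac{\frac{q}{\phi(q)}}^{j-1} = P_{\abs{S_m}}\brac{\frac{q}{\phi(q)}}\]
by the definition \eqref{eq:Pkz-polynomial-definition}, while cells $m \notin \mathcal{J}$ force $\mathcal{Q} \cap S_m = \emptyset$ and contribute a factor of $1$. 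Summing the result over the $M$-tuple $(d_1,\ldots,d_M) \in [1,h]^M$ gives exactly the claimed right-hand side (with the $\mathcal{J} = \emptyset$ term recovering the empty-$\mathcal{Q}$ contribution $\mathfrak{S}(\emptyset;q)=1$).

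The proof is essentially a bookkeeping exercise and I do not expect a serious obstacle; the only subtlety is to keep straight that $\mathfrak{S}(\mathcal{D}_{\mathcal{J}}; q)$ is being interpreted via its finite Euler product (so is well-defined even when some of the $d_m$ with $m \in \mathcal{J}$ happen to coincide), which is precisely the content of Lemma~\ref{lem-S-relative-to-q-repeated-elements} and justifies the multiplicity-correction factor $(q/\phi(q))^{\abs{\mathcal{Q}} - \abs{\mathcal{J}(\mathcal{Q})}}$ used above.
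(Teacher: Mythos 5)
Your proof is correct and follows essentially the same approach as the paper: reparameterise by the $M$-tuple of per-cell values, apply (a slight generalisation of) Lemma~\ref{lem-S-relative-to-q-repeated-elements} to collapse $\mathfrak S(\mathcal D_{\mathcal Q};q)$ to $(q/\phi(q))^{|\mathcal Q|-|\mathcal J(\mathcal Q)|}\mathfrak S(\mathcal D_{\mathcal J(\mathcal Q)};q)$, then group the $\mathcal Q$-sum by $\mathcal J(\mathcal Q)=\mathcal J$ and factorise over cells to read off $\prod_{m\in\mathcal J}P_{|S_m|}$. The paper writes the multiplicity correction as $\prod_{m\in\mathcal J}(q/\phi(q))^{|\mathcal Q\cap S_m|-1}$ and you write it as $(q/\phi(q))^{|\mathcal Q|-|\mathcal J|}$, but these are identical, and your explicit parametrisation $\mathcal Q=\bigsqcup_{m\in\mathcal J}T_m$ with $\emptyset\neq T_m\subseteq S_m$ makes the factorisation step marginally more transparent.
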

\begin{proof}
By Lemma \ref{lem-S-relative-to-q-repeated-elements}, any multiple instance of any $d_i$ can be removed and replaced by a factor of $\frac{q}{\phi(q)}$, so the left-hand side can be expressed as a sum of the form
\begin{equation*}
\sum_{\substack{d_1, \dots, d_M \\ 1 \le d_i \le h}} \sum_{\substack{\mathcal J \subseteq \{1, \dots, M\}}} g_{\mathcal J}\left(\frac{q}{\phi(q)}\right) \mathfrak S(\mathcal{D}_{\mathcal J};q),
\end{equation*}
for some polynomials $g_{\mathcal J}$. Our aim is to show that for all $\mathcal J$, $g_{\mathcal J} = \prod_{m \in \mathcal J} P_{|S_m|}$. 

For a fixed $\mathcal J \subseteq \{1, \dots, M\}$, the terms contributing to $\mathfrak S(\mathcal{D}_{\mathcal J};q)$ are precisely those $\mathcal Q$ such that $|\mathcal Q \cap S_m| \ge 1$ if and only if $m \in \mathcal J$, and the contribution from $\mathcal Q$ is $\prod_{m \in \mathcal J} \left(\frac{q}{\phi(q)}\right)^{|\mathcal Q \cap S_m|-1} \mathfrak S(\mathcal{D}_{ \mathcal J};q)$. Thus, the coefficient of $\mathfrak S(\mathcal{D}_{\mathcal J};q)$ is given by
\begin{align*}
\sum_{\substack{\mathcal Q \subseteq \{1,\ldots,k\} \\ |\mathcal{Q}\cap S_m| \ge 1 \Leftrightarrow m \in \mathcal J }} (-1)^{|\mathcal Q|}&\prod_{m \in \mathcal J} \left(\frac{q}{\phi(q)}\right)^{|\mathcal Q \cap S_m|-1} \\
&= \prod_{m \in \mathcal J} \sum_{t_m = 1}^{|S_m|} \binom{|S_m|}{t_m}(-1)^{t_m} \left(\frac{q}{\phi(q)}\right)^{t_m-1}.
\end{align*}
This expression is precisely $\prod_{m \in \mathcal J} P_{|S_m|}\left(\frac{q}{\phi(q)}\right),$ as desired.
\end{proof}

Applying Lemma~\ref{lem:partitions-to-polynomial-coefficients-S} we thus have that $R_k(h)$ is equal to
\begin{align*}
&(-1)^{k}\sum_{\mathcal P = \{S_1, \dots, S_M\}} w(\mathcal P)\sum_{\substack{d_1, \dots, d_M \\ 1 \le d_i \le h}} \sum_{\substack{\mathcal J \subseteq \{1, \dots, M\}}} \prod_{m \in \mathcal J} P_{|S_m|}\left(\frac{q}{\phi(q)}\right) \mathfrak S(\mathcal{D}_{\mathcal J};q) + O(1).
\end{align*}
We now note that
\[\mathfrak{S}(\mathcal{D};q) = \sum_{\mathcal{R}\subseteq \{1,\ldots,k\}}\mathfrak{S}_0(\mathcal{D}_{\mathcal{R}};q).\]
Therefore 
\begin{align*}
\sum_{\substack{d_1, \dots, d_M \\ 1 \le d_i \le h}} &\sum_{\substack{\mathcal J \subseteq \{1, \dots, M\}}} \prod_{m \in \mathcal J} P_{|S_m|}\left(\frac{q}{\phi(q)}\right) \mathfrak S(\mathcal{D}_{\mathcal J};q) \\
&= \sum_{\substack{d_1, \dots, d_M \\ 1 \le d_i \le h}} \sum_{\substack{\mathcal R \subseteq \{1, \dots, M\}}} \brac{\sum_{\substack{\mathcal{J}\subseteq \{1,\ldots,M\}\\ \mathcal{J}\supseteq \mathcal{R}}}\prod_{m \in \mathcal J} P_{|S_m|}\left(\frac{q}{\phi(q)}\right)}\mathfrak{S}_0(\mathcal{D}_{\mathcal R};q) \\ 
&=\sum_{\substack{d_1, \dots, d_M \\ 1\le d_i \le h}} \sum_{\mathcal R \subseteq \{1, \dots, M\}} f_{\mathcal R,\mathcal P}\left(\frac{q}{\phi(q)}\right)\mathfrak S_0(\mathcal D_{\mathcal R};q),
\end{align*}
say, where
\begin{equation*}
f_{\mathcal R,\mathcal P}(z) = \prod_{m \in \mathcal R} P_{|S_m|}(z) \prod_{m \not\in \mathcal R} \left(1+P_{|S_m|}(z)\right)
\end{equation*}
is a polynomial of degree $k-M$, unless (recalling that $P_1(z)=-1$) there exists some $m\not\in \mathcal{R}$ such that $|S_m| = 1$, in which case $f_{\mathcal R, \mathcal P}(z) = 0$.

We are now ready to return to the original computation, whence
\begin{align*}
R_k(h) &= (-1)^{k}\sum_{\mathcal P} w(\mathcal P) \sum_{\substack{d_1, \dots, d_M \\ 1\le d_i \le h}} \sum_{\mathcal R \subseteq \{1, \dots, M\}} f_{\mathcal R,\mathcal P}\left(\frac{q}{\phi(q)}\right)\mathfrak S_0(\mathcal{D}_{\mathcal R};q) + O(1) \\
&= (-1)^{k}\sum_{\mathcal P} w(\mathcal P) \sum_{\substack{\mathcal R \subseteq \{1, \dots, M\}}} f_{\mathcal R,\mathcal P}\left(\frac{q}{\phi(q)}\right) h^{M-|\mathcal R|} V_{|\mathcal R|}(q,h)  + O(1).
\end{align*}
The second line follows by applying \eqref{eq:Vkqh-in-terms-of-S0q}; if $|\mathcal R| = 0$, the term $V_0(q,h)$ does not appear and in its place is simply $1$ (recalling that $\mathfrak S_0(\mathcal D_{\emptyset};q) = 1$), but for simplicity we will write $V_0(q,h) = 1$.

Recall that $q = \prod_{p \le y} p$, and that in the notation of Theorem \ref{thm:montgomery-vaughan-odd}, $M_j(q,h) = q\left(\frac{\phi(q)}{q}\right)^j V_j(q,h)$. By \cite{MV86} in the even case and Theorem \ref{thm:montgomery-vaughan-odd} in the odd case, $V_{\mathcal R}(q,h) \ll (\log h)^{O(1)} h^{\lfloor \abs{\mathcal{R}}/2\rfloor}$. Since $f_{\mathcal{R},\mathcal{P}} = 0$ unless $m\in \mathcal{R}$ whenever $\abs{S_m}=1$, and $f_{\mathcal{R},\mathcal{P}}(q/\phi(q))\leq (\log h)^{O(1)}$ always, we have shown that
\[R_k(h) \ll  (\log h)^{O(1)}\sum_{\mathcal P = \{S_1, \dots, S_M\}} \sum_{\substack{\mathcal R \subseteq \{1, \dots, M\}\\ \abs{S_m}=1\implies m\in \mathcal{R}}}  h^{M-\lceil|\mathcal R|/2\rceil}.\]
For a given partition $\mathcal P = \{S_1, \dots, S_M\}$, let $N_1$ denote the number of $m$ with $|S_m| = 1$, i.e.~ the number of parts of size $1$. The only $\mathcal R$ that contribute in this sum (for a fixed $\mathcal{P}$) have $|\mathcal R| \ge N_1$. For a fixed $N_1$, the total number of parts $M$ must satisfy
\begin{equation*}
M \le \frac{k-N_1}{2} + N_1 = \frac{k+N_1}{2}.
\end{equation*}
Therefore (since $\abs{\mathcal{R}}\geq N_1$ in the sum above)
\[R_k(h) \ll  (\log h)^{O(1)}\sum_{\mathcal P} h^{\lfloor\frac{k+N_1}{2}\rfloor-\lceil N_1/2\rceil}.\]
When $k$ is odd we have (for any $N_1$)
\[\left\lfloor\frac{k+N_1}{2}\right\rfloor-\lceil N_1/2\rceil= \frac{k-1}{2},\]
so
\[R_k(h) \ll (\log h)^{O(1)} h^{\frac{k-1}{2}}, \]
as desired.

\subsection{Formulae for $R_k(h)$}\label{subsec-rkformulae}

Our discussion in the proof of Theorem \ref{thm:conjecture-for-Rkh-for-k-odd} allows us to write a precise expression for the main term of $R_k(h)$. For $k$ even, an asymptotic for $R_k(h)$ has already been given in \cite{MS04}, dependent on $V_j(q,h)$ for $j \le k$ even. For odd $k$, however, every $V_j(q,h)$ for $j \le k$ contributes (we expect) to the main term.

As in the previous subsection, we take $q=\prod_{p \le y}p$, where $y=h^{k+1}$. The reader may be puzzled here, since we are estimating $R_k(h)$ (which has no parameter $q$) in terms of $V_j(q,h)$, and our proof in the previous section is valid if we take any $y>h^{k+1}$. We lose nothing by fixing some specific value of $y$, however, since we expect $V_k(q,h)$ to be independent of $q$ as $q \to \infty$ along primorials, as it becomes a closer and closer approximation of a (normalised) $k$th moment of primes in short intervals. In particular, the formulae for $R_k(h)$ we present here are, despite appearances, (conjecturally) independent of $q$.

As in the previous section we have
\begin{equation*}
R_k(h) = (-1)^{k}\sum_{\mathcal P} w(\mathcal P) \sum_{\substack{\mathcal R \subseteq \{1, \dots, M\}}} f_{\mathcal R,\mathcal P}\left(\frac{q}{\phi(q)}\right) h^{M-|\mathcal R|} V_{|\mathcal R|}(q,h)  + O(1),
\end{equation*}
where we recall that $f_{\mathcal R, \mathcal P} = 0$
unless $m \in \mathcal R$ for every $m$ with $|S_m| = 1$. As above, for a given partition $\mathcal P = \{S_1, \dots, S_M\}$, let $N_1$ denote the number of $m$ with $|S_m| = 1$, i.e.~ the number of parts of size $1$. Again, we must have $|\mathcal R| \ge N_1$, and $M \le \frac{k+N_1}{2}$.

We expect $V_{\abs{\mathcal{R}}}(q,h)\asymp h^{\lfloor \abs{\mathcal{R}}/2\rfloor+o(1)}$, and therefore for a fixed pair $\mathcal{P},\mathcal{R}$ the corresponding summand has magnitude $\asymp  h^{M-\lceil\lvert\mathcal R\rvert/2\rceil+o(1)}$. Since $\lvert \mathcal{R}\rvert \geq N_1$, for a fixed partition $\mathcal P$ the dominant term in the inner sum will be from those $\mathcal{R}$ with $\lvert \mathcal{R}\rvert=N_1$ if $N_1$ is even, or $\lvert \mathcal{R}\rvert\{N_1,N_1+1\}$ if $N_1$ is odd. The corresponding contribution therefore from the summand corresponding to $\mathcal{P}$ has size
\begin{equation*}
\ll (\log h)^{O(1)}h^{M-\lceil N_1/2\rceil}.
\end{equation*}
If there are, in general, $N_i$ parts of the partition with size $i$, then $k=N_1+2N_2+3N_3+\cdots$ and 
\[M=N_1+N_2+\cdots=\frac{k}{2}+\frac{N_1}{2}-\frac{1}{2}N_3-N_4-\cdots\]
The summand corresponding to $\mathcal{P}$ therefore has size
\begin{equation}\label{eq-pterm}
\ll (\log h)^{O(1)}h^{\frac{k}{2}-\{ N_1/2\}-\frac{1}{2}N_3-N_4-\cdots}.
\end{equation}
If $k=2\ell$ is even, this is maximised when $N_1$ is even and $N_3=N_4=\cdots=0$. Therefore, the only summands which contribute to the main term for $R_k(h)$ are those corresponding to partitions into parts of sizes $1$ and $2$, each of which contributes a summand of magnitude $\asymp h^{k/2+o(1)}$. This argument (which is essentially that used by Montgomery and Soundararajan \cite{MS04}) establishes that
\begin{align*}
R_{2\ell}(h) &\sim \sum_{\substack{\mathcal P = \{S_1, \dots, S_M\} \\ |S_m| = 1 \text{ or } 2 \\ \mathcal R = \{m:S_m = 1\}}} w(\mathcal P) f_{\mathcal R,\mathcal P}\left(\frac{q}{\phi(q)}\right) h^{M-|\mathcal R|} V_{|\mathcal R|}(q,h) \\
&\sim \sum_{j=0}^{\ell} (-1)^j \binom{2\ell}{2j}\frac{(2j)!}{j!2^j} \left(\frac{q}{\phi(q)}-1\right)^{j} h^{j}V_{2\ell-2j}(q,h),
\end{align*}
where in the second line $j$ counts the number of parts of size $2$ in $\mathcal P$, so that $M-|\mathcal R| = j$. Note that there are $\binom{2\ell}{2j}\frac{(2j)!}{j!2^j}$ partitions of $[1,2\ell]$ with $j$ parts of size $2$ and the rest of size $1$, and for these partitions $w(\mathcal P) = (-1)^j$. Here we have used that
\begin{align*}
f_{\mathcal R,\mathcal P}(z) 
&= \prod_{m \in \mathcal R} P_{|S_m|}(z) \prod_{m \not\in \mathcal R} \left(1+P_{|S_m|}(z)\right)\\
&=P_1(z)^{M-j}(1+P_2(z))^{j}=(-1)^k(z-1)^{j}
\end{align*}
when $\mathcal{P}$ is a partition of $\{1,\ldots,k\}$ into $M-j$ parts of size $1$ and $j$ parts of size $2$, and $\mathcal{R}=\{m: \abs{S_m}=1\}$. 

If $k=2\ell+1$ then it is slightly more complicated. Note that (since $\mathcal{P}$ is a partition of $\{1,\ldots,k\}$) we cannot have $N_1$ even and $N_3=N_4=\cdots=0$, and hence the greatest that \eqref{eq-pterm} can be is $\asymp h^{\frac{k-1}{2}+o(1)}$. This is achieved when either $N_1$ is odd and $N_3=N_4=\cdots=0$, or when $N_1$ is even and $N_3=1$ and $N_4=\cdots=0$.

In other words, the contributions to the main term for $R_k(h)$ (all of which with magnitude $\asymp h^{\frac{k-1}{2}+o(1)}$) arise from those partitions into parts of size only $1$ and $2$ and at most one part of size $3$. If there is a single part of size $3$ then the only significant inner summand is where $\mathcal{R}=\{m : \abs{S_m}=1\}$, but if there are no parts of size $3$ then any $\mathcal{R}$ which contains $\{m : \abs{S_m}=1\}$ of size either $N_1$ or $N_1+1$ must be counted.

To summarise, this argument shows that 
\begin{align*}
R_{2\ell+1}(h) &= -\sum_{\substack{\mathcal P = \{S_1, \dots, S_M\} \\ |S_m| = 1 \text{ or } 2 \\ \mathcal R = \{m:S_m = 1\}}} w(\mathcal P)  f_{\mathcal R,\mathcal P}\left(\frac{q}{\phi(q)}\right)h^{M-|\mathcal R|}V_{|\mathcal R|}(q,h) \\
&- \sum_{\substack{\mathcal P = \{S_1, \dots, S_M\} \\ |S_m| = 1 \text{ or } 2 \\ \mathcal R = \{m:S_m = 1\}}} w(\mathcal P)  \sum_{m \not\in \mathcal R} f_{\mathcal R \cup \{m\},\mathcal P}\left(\frac{q}{\phi(q)}\right) h^{M-|\mathcal R|-1} V_{|\mathcal R|+1}(q,h) \\
&-\sum_{\substack{\mathcal P = \{S_1, \dots, S_M\} \\ |S_m| = 1, 2 \text{ or } 3 \\ \text{one $m$ s.t. $|S_m| = 3$} \\ \mathcal R = \{m:|S_m| = 1\} \\ \mathcal R \ne \emptyset}} w(\mathcal P) f_{\mathcal R,\mathcal P} \left(\frac{q}{\phi(q)}\right) h^{M-|\mathcal R|} V_{|\mathcal R|}(q,h) + O((\log h)^{O(1)}h^{\ell-1}).
\end{align*}
This is equal to
\begin{align*}
&= \sum_{j=0}^{\ell} (-1)^{j} \binom{2\ell+1}{2j}\frac{(2j)!}{j!2^j}\left(\frac{q}{\phi(q)}-1\right)^jh^jV_{2\ell+1-2j}(q,h) \\
&+ \sum_{j=1}^{\ell} (-1)^{j} \binom{2\ell+1}{2j}\frac{(2j)!}{j!2^j}j\left(\frac{q}{\phi(q)}-1\right)^{j-1} \left(\frac{q}{\phi(q)} - 2\right) h^{j-1}V_{2\ell+2-2j}(q,h) \\
&+\sum_{j=0}^{\ell-1} 2(-1)^j\binom{2\ell+1}{3,2j}\frac{(2j)!}{j!2^j} \left(\frac{q}{\phi(q)}-1\right)^{j+1}\left(\frac{q}{\phi(q)}-2\right)h^{j+1}V_{2\ell-2-2j}(q,h) \\
&+ O((\log h)^{O(1)}h^{\ell-1}),
\end{align*}
calculating that if $\mathcal{P}$ is a partition of $\{1,\ldots,k\}$ into $j$ parts of size $2$, $1$ part of size $3$, and $k-2j-3$ parts of size $1$, and if $\mathcal{R}=\{m: \abs{S_m}=1\}$, then
\[f_{\mathcal R,\mathcal P}(z) = (-1)^{k-2j-3}(1+P_2(z))^{j}(1+P_3(z))=-(z-1)^{j+1}(z-2).\]
Furthermore, for such a partition we have $w(\mathcal{P})=2(-1)^{j}$. 

For example, when $\ell = 1$, we have proved that, recalling that $V_1=0$ and $V_0=1$, 
\begin{align*}
R_3(h) &= V_3(q,h) - 3\left(\frac{q}{\phi(q)}-2\right) V_2(q,h) \\
&+ 2\brac{\frac{q}{\phi(q)}-1}\brac{\frac{q}{\phi(q)}-2}h+O((\log h)^{O(1)}).
\end{align*}
We expect all three main terms here to have magnitude $\asymp (\log h)^{2}h$, since $q/\phi(q)\asymp \log h$. Similarly, when $\ell = 2$, we have
\begin{align*}
R_5(h) &= V_5(q,h) -10 \left(\frac{q}{\phi(q)}-2\right)V_4(q,h) - 10 \left(\frac{q}{\phi(q)}-1\right)hV_3(q,h) \\
&+ 35 \left(\frac{q}{\phi(q)}-1\right)\left(\frac{q}{\phi(q)}-2\right)hV_2(q,h) 
-20\brac{\frac{q}{\phi(q)}-1}^2\brac{\frac{q}{\phi(q)}-2}h^2
\\
&+O((\log h)^{O(1)}h).
\end{align*}
We expect each of these main terms to be of size $(\log h)^3h^2$. 

Although our proofs of these `asymptotic formulae' are unconditional, they are of limited use without an asymptotic understanding of the $V_j(q,h)$ terms. The main point is really that an asymptotic understanding of $R_k(h)$ and an asymptotic understanding of $V_k(q,h)$ are essentially equivalent (and both are roughly equivalent in turn to an asymptotic understanding of the $k$th moment of primes in short intervals).

\section{Further remarks}\label{sec-problems}

In this section, we will discuss further directions and open problems, focusing mainly on our bound on fractions. For simplicity we will discuss Theorem \ref{th-simpfrac}, although the same discussion could be applied to the more complicated Theorem \ref{th-oddsum}.

\subsection{Improving Theorem~\ref{th-simpfrac}}
The bound that we achieve in Theorem \ref{th-simpfrac} is best possible up to the factors of $\log Q$: elementary number theory shows that, for any $1\leq n\leq Q$, the number of $1\leq q\leq Q$ and $0\leq a_1,a_2\leq n$ such that 
\[\ogcd{a_1}{q}=\ogcd{a_2}{q}=1\]
is $\gg n^2Q$. Using the trivial identity
\[\frac{a_1}{q}+\frac{a_2}{q}-\frac{a_1+a_2}{q}=0\]
therefore produces $\gg n^2Q$ many solutions when $k=3$. (Of course it may not be true that $\ogcd{a_1+a_2}{q}=1$, but clearly $\tfrac{a_1+a_2}{q}=\tfrac{a'}{q'}$ for some $0\leq a'\leq n$ and $1\leq q'\leq Q$ with $\ogcd{a'}{q'}=1$.) A similar lower bound of $\gg n^{\frac{k+1}{2}}Q^{\frac{k-1}{2}}$ for other odd $k>3$ follows by adjoining such triples with diagonal pairs $(\tfrac{a}{q},-\tfrac{a}{q})$.

By a more delicate argument an improved lower bound can be obtained. For example, Blomer and Br\"{u}dern \cite{BB} prove that when $k=3$ there are $\gg (\log Q)^{3}n^2Q$ many solutions (and this exponent of $3$ is optimal given their coprimality assumptions). 

An asymptotic formula for the number of such solutions is not known for any odd $k$, even for $k=3$, although Blomer, Br\"{u}dern, and Salberger \cite{BBS} proved such a formula in a similar situation (differing only in the coprimality condition) when $n=Q$ and $k=3$, and Destagnol \cite{De} has proved such a formula for arbitrary $k$, again when $n=Q$ and under weaker coprimality conditions. It would be of interest to obtain even a reasonable conjecture for what the main term should be; the problem in this generality seems to be out of reach of Manin's conjecture.

Just as with the case of even $k$, we expect the count for odd $k$ to be dominated by `degenerate' solutions (for example, when $k=5$ the dominant contribution is likely from those where some subset of three fractions vanishes). It is perhaps then most natural to count only the `non-degenerate' solutions. 

More precisely, we say that a solution to
\[\frac{a_1}{q_1} + \cdots + \frac{a_k}{q_k} = 0\]
with $\ogcd{a_i}{q_i} = 1$, $0 < |a_i| \leq n$, and $1 \leq q_i \leq Q$ is \emph{degenerate} if there exists some proper subset $\emptyset \subsetneq I \subsetneq \{1, \ldots, k\}$ such that
\[\sum_{i \in I} \frac{a_i}{q_i} = 0.\]
For example, the lower bound described above for odd $k > 3$ is degenerate. Heuristically, one might expect the number of non-degenerate solutions to be of order $\approx n^{k-1}Q$ (without worrying about powers of $\log$). To see this,\footnote{We thank Tim Browning for showing us this heuristic.} we begin with $n^kQ^k$ total choices of $a_i$ and $q_i$, and we require that 
\[\sum_{i=1}^k a_i \prod_{j \ne i} q_j = 0 \]
(after clearing denominators). The left-hand side is an integer of size $O(nQ^{k-1})$, so the probability that it is $0$ is roughly $\frac 1{nQ^{k-1}}$, leading to $\frac{n^kQ^k}{nQ^{k-1}} = n^{k-1}Q$ solutions. This heuristic is of the same flavor as some of the reasoning behind Manin's conjecture, and Heath--Brown's result \cite{H-B} on counting the error term in solutions when $k = 4$ and $n = 1$ is consistent with this heuristic.

We are thus led to the following conjecture.
\begin{conjecture}\label{conj1}
Let $1\leq n\leq Q$ and $k\geq 3$ be an odd integer. The number of non-degenerate solutions to
\[\frac{a_1}{q_1}+\cdots+\frac{a_{k}}{q_{k}}= 0\quad\textrm{ where }\quad\ogcd{a_i}{q_i}=1\textrm{ for }1\leq i\leq k,\] 
with $\abs{a_i}\leq n$ and $q_i\leq Q$ for $1\leq i\leq k$, is
\[\ll (\log Q)^{O(1)} n^{k-1}Q.\]
\end{conjecture}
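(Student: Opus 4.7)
The plan is induction on odd $k\ge 3$. The base case $k=3$ is immediate: Theorem~\ref{th-simpfrac} already gives $\ll (\log Q)^{O(1)}n^{2}Q=(\log Q)^{O(1)}n^{k-1}Q$, and for $k=3$ essentially every `non-degenerate' solution is already counted on the nose.

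For the induction step my first attempt would be a M\"obius inversion on the poset of \emph{zero-subsum families}. To each solution $(a_i/q_i)_{1\le i\le k}$ associate the $\mathbb{F}_2$-subspace
\[\mathcal{F}\;=\;\{I\subseteq\{1,\ldots,k\}:\tsum_{i\in I}a_i/q_i=0\}\;\subseteq\;\mathbb{F}_2^{k},\]
which is closed under symmetric difference and always contains both $\emptyset$ and $\{1,\ldots,k\}$. Non-degenerate solutions are exactly those with $\dim\mathcal{F}=1$. Stratifying by $\mathcal F$ and inverting gives the non-degenerate count as a signed sum
\[N^{\mathrm{nd}}(k,n,Q)\;=\;\sum_{\mathcal{F}}\mu(\mathcal{F})\,N_{\mathcal{F}}(n,Q),\]
where $N_{\mathcal{F}}(n,Q)$ counts the solutions whose zero-subsum family contains $\mathcal{F}$. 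When $\mathcal{F}$ is spanned by the indicators of a set partition $\{S_1,\ldots,S_m\}$ of $\{1,\ldots,k\}$, the count $N_{\mathcal{F}}$ factors as a product of sub-sum counts on each $S_j$, each of which is controlled by Theorem~\ref{th-simpfrac} (for $|S_j|$ odd) or by the known bound $n^{|S_j|/2}Q^{|S_j|/2}$ (for $|S_j|$ even).

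The principal obstacle is that Theorem~\ref{th-simpfrac} already delivers $n^{(k+1)/2}Q^{(k-1)/2}$, exceeding the target $n^{k-1}Q$ by a factor of $(Q/n)^{(k-3)/2}$ whenever $n<Q$, and term-by-term bounding in the M\"obius expansion does not close this gap. Indeed, even the simplest non-trivial stratum --- $\mathcal{F}$ generated by a single pair $\{i,j\}$, corresponding to a `diagonal' slice $\frac{a_i}{q_i}+\frac{a_j}{q_j}=0$ glued to a non-degenerate $(k-2)$-tuple summing to $0$ --- contributes $\asymp (nQ)\cdot n^{(k-1)/2}Q^{(k-3)/2} = n^{(k+1)/2}Q^{(k-1)/2}$ under the current bounds, matching the main term. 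A successful argument must therefore either extract genuine cancellation between the signed contributions $\mu(\mathcal F)N_{\mathcal F}$, or else prove a non-trivially stronger bound for the \emph{even} non-degenerate count (a result that would itself imply Conjecture~\ref{conj1}); the induction alone does not close.

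I expect the most plausible route to Conjecture~\ref{conj1} lies in geometric methods. The cleared equation $\sum_i a_i\prod_{j\ne i}q_j=0$ defines a hypersurface in $(\mathbb{P}^1)^{k}$, and the degenerate locus is precisely the accumulating subvariety in the sense of Manin's conjecture; the target bound $n^{k-1}Q$ is exactly the prediction for rational points of bounded multi-height away from this locus. Heath-Brown's determinant method, as deployed in \cite{H-B} for $k=4$ and $n=1$ (with the optimal power of $\log Q$), is the natural template, and extending it to arbitrary odd $k$ uniformly in $n\le Q$ is, in my view, the central analytic challenge. A purely elementary proof in the style of Section~\ref{sec-fractions} would instead require a way of encoding non-degeneracy within the relative gcd framework so as to save the factor $(Q/n)^{(k-3)/2}$ over Theorem~\ref{th-oddsum}; it is not clear to me whether such an encoding exists.
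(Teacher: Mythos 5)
This statement is Conjecture~\ref{conj1} of the paper, posed as an open problem in Section~\ref{sec-problems}; the paper contains no proof, only a probabilistic heuristic (attributed there to Browning) and the remark that the problem ``seems to be out of reach of Manin's conjecture.'' There is therefore nothing in the paper to compare your argument against.

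That said, you do not actually claim a proof, and your analysis is accurate. The base case $k=3$ does follow from Theorem~\ref{th-simpfrac}, since $n^{(k+1)/2}Q^{(k-1)/2}=n^{k-1}Q$ when $k=3$ (and for $k=3$ the degenerate solutions all force some $a_i=0$). You correctly identify the central obstacle for $k\ge 5$: Theorem~\ref{th-simpfrac} exceeds the conjectured $n^{k-1}Q$ by a factor of $(Q/n)^{(k-3)/2}$, and term-by-term bounding in your M\"obius inversion over zero-subsum strata cannot close this gap, since already the one-dimensional stratum (the total count) saturates the bound of Theorem~\ref{th-simpfrac}. Your concluding suggestion that determinant methods in the style of Heath-Brown~\cite{H-B} are the most plausible route is consistent with the paper's own framing, which cites \cite{H-B} as evidence for the heuristic. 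In short, you have correctly recognised an open problem and sensibly catalogued why the available tools fall short --- which is the right conclusion to reach, though it is not a proof of the conjecture.
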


A proof of such a result would help significantly in our asymptotic understanding of $M_k(q,h)$ and $R_k(h)$.
\subsection{Improved error bounds for even moments}
There is a strong connection between the study of odd moments and the study of lower-order terms in even moments; indeed, in the original work of Montgomery and Vaughan \cite{MV86} both were treated essentially identically, resulting in the same bound (namely an exponent of $\frac{k}{2}-\frac{1}{7k}$) for both. 

Surprisingly, the methods of this paper do not appear to help with the lower order terms for even moments. A variant strong enough to address Conjecture~\ref{conj1} would be helpful towards this end. Improved bounds for the error terms in even moments would have a number of applications, such as to our understanding of primes in short intervals (following Montgomery and Soundararajan \cite{MS04}) and the distribution of squarefree numbers in short intervals (following Gorodetsky, Mangerel, and Rodgers \cite{GMR}). 

\appendix
\section{Relative greatest common divisors}\label{app1}
In this appendix we give a precise definition for, and prove the necessary properties of, the notion of relative greatest common divisors. Recall that this is the parameterisation of an arbitrary $k$-tuple of natural numbers $q_1,\ldots,q_k$ by some $2^k$ integers $g_I$, as $I$ ranges over subsets of $\{1,\ldots,k\}$, such that $q_i=\prod_{I\ni i}g_I$ and $\ogcd{g_I}{g_J}=1$ unless $I\subseteq J$ or $J\subseteq I$. We note that we always have $g_\emptyset=1$ for any $k$-tuple, but it is often convenient for technical reasons to allow $I$ to range over all subsets.

This is a generalisation of the notion of greatest common divisors, which can be formulated as the parameterisation of arbitrary pairs of natural numbers $q_1,q_2$ by three integers $g_1,g_2,g_{12}$ where $q_1=g_1g_{12}$ and $q_2=g_2g_{12}$ and $\ogcd{g_1}{g_2}=1$ -- indeed, we simply choose $g_{12}=\ogcd{q_1}{q_2}$. 

The appropriate generalisation to arbitrary $k$-tuples was first given by Dedekind \cite{Dede} in 1897. This concept has been rediscovered several times; we refer to the discussion in the thesis of Elsholtz \cite{El} for further historical remarks (and applications to the study of Egyptian fractions).

It is most common to define relative greatest common divisors recursively, beginning with $I=\{1,\ldots,k\}$, where
\[g_{\{1,\ldots,k\}} = \mathrm{gcd}(q_1,\ldots,q_k),\]
and in general, for $\emptyset \neq I \subsetneq \{1,\ldots,k\}$, define
\[g_I = \frac{\gcd(q_i : i\in I)}{\prod_{J\supsetneq I}g_J},\]
so that
\[\prod_{J \supseteq I} g_I = \gcd(q_i:i \in I).\]
The following equivalent `local' definition is more explicit and can be easier to manipulate in practice.
\begin{definition}
For any integers $q_1,\ldots,q_k$ and non-empty $I\subseteq \{1,\ldots,k\}$ we define the \emph{relative greatest common divisor} $g_I$ as follows. It suffices to define $\nu_p(I)$ for all primes $p$ and all $\emptyset\neq I\subseteq \{1,\ldots,k\}$, where $\nu_p(I)$ denotes the greatest exponent such that $p^{\nu_p(I)}$ divides $g_I$. For this, without loss of generality, we may assume that the $q_i$ are ordered such that 
\[\nu_p(q_1)\leq \cdots \leq \nu_p(q_k).\]
With this ordering, we define $\nu_p(\{1,\ldots,k\})=\nu_p(q_1)$ and for all $2\leq i\leq k$ set
\[\nu_p(\{i,\ldots,k\})=\nu_p(q_i)-\nu_p(q_{i-1}),\]
and set $\nu_p(I) = 0$ for all other sets $I$. (Note that $\nu_p(I)\geq 0$ for all $p$ and $I$, and so the $g_I$ are indeed integers.)
\end{definition}
For example, if $q_1=6$ and $q_2=9$ and $q_3=12$ then $2$-adically we have $\nu_2(\{1,2,3\})=0$ and $\nu_2(\{1,3\})=\nu_2(\{3\})=1$ (and otherwise $\nu_2(I)=0$), and $3$-adically we have $\nu_3(\{1,2,3\})=\nu_3(\{2\})=1$ (and otherwise $\nu_3(I)=0$), and therefore
\[g_1=g_{12}=g_{23}=1\quad g_3=g_{13}=2\quad g_2=g_{123}=3.\]
The following lemma captures all the basic properties of relative greatest common divisors that we will use (in fact we will never use the third, but it is worth recording nonetheless). Note that the first property is equivalent to the relation for the recursive definition of relative gcds, when applied to $I = \{i\}$.
\begin{lemma}
Let $q_1,\ldots,q_k\geq 1$ be any integers, and define the corresponding $g_I$ for $I\subseteq \{1,\ldots,k\}$ as above. We have:
\begin{enumerate}
\item for any $1\leq i\leq k$
\[q_i=\prod_{i\in I}g_I,\]
\item if $\ogcd{g_I}{g_J}>1$ then either $I\subseteq J$ or $J\subseteq I$, and 
\item if all $q_i$ are squarefree then $\ogcd{g_I}{g_J}=1$ for all $I\neq J$.
\end{enumerate}
\end{lemma}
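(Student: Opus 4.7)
All three properties are $p$-adic statements (divisibility and gcd conditions can be checked one prime at a time), so my plan is to fix an arbitrary prime $p$ and analyse the valuations $\nu_p(g_I)$. The key observation is that, although the ordering used in the definition of $g_I$ depends on $p$, for each fixed $p$ the only sets $I$ with $\nu_p(I)>0$ are the ``tail sets'' $T_p(i) := \{\sigma_p(i),\sigma_p(i+1),\ldots,\sigma_p(k)\}$, where $\sigma_p$ is the permutation ordering the $q_j$ so that $\nu_p(q_{\sigma_p(1)})\leq\cdots\leq\nu_p(q_{\sigma_p(k)})$. These tail sets form a totally ordered chain $T_p(1)\supseteq T_p(2)\supseteq\cdots\supseteq T_p(k)$, and that single structural fact will drive all three properties.

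For property (1), I would fix $p$ and $j$, set $t = \sigma_p^{-1}(j)$, and note that the sets $T_p(i)$ containing $j$ are precisely those with $i\leq t$. A telescoping sum then gives
\[\sum_{I\ni j}\nu_p(I) = \sum_{i=1}^{t}\brac{\nu_p(q_{\sigma_p(i)})-\nu_p(q_{\sigma_p(i-1)})} = \nu_p(q_{\sigma_p(t)}) = \nu_p(q_j),\]
where I use the convention $\nu_p(q_{\sigma_p(0)})=0$. Since this holds for every prime $p$, the desired equality $q_j = \prod_{I\ni j}g_I$ follows.

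For property (2), suppose $\gcd(g_I,g_J)>1$, so that some prime $p$ satisfies $\nu_p(I),\nu_p(J)>0$. By the key observation, both $I$ and $J$ must equal tail sets $T_p(i)$ and $T_p(i')$ for some indices $i,i'$, and since the tail sets are totally ordered by inclusion, either $I\subseteq J$ or $J\subseteq I$. For property (3), if every $q_i$ is squarefree then each $\nu_p(q_{\sigma_p(i)})\in\{0,1\}$, so the sorted sequence of valuations has the form $0,\ldots,0,1,\ldots,1$, and the telescoping differences $\nu_p(T_p(i))=\nu_p(q_{\sigma_p(i)})-\nu_p(q_{\sigma_p(i-1)})$ are positive for at most one index $i$. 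Hence for each $p$ there is at most one set $I$ with $p\mid g_I$, which forces $\gcd(g_I,g_J)=1$ whenever $I\neq J$.

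I expect no serious obstacle: the only point requiring a little care is the fact that the permutation $\sigma_p$ depends on $p$, which means one must phrase everything locally at a prime and never try to choose a single global ordering. Once one commits to a prime-by-prime viewpoint, each property reduces to an entirely elementary statement about the tail-set chain, and the argument is essentially bookkeeping.
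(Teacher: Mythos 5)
Your proposal is correct and takes essentially the same approach as the paper's proof: both fix a prime $p$, order the $q_i$ by $p$-adic valuation, observe that the only sets with positive $\nu_p$-value are nested tail sets, and then derive (1) by telescoping, (2) from the chain structure, and (3) from the squarefree valuations lying in $\{0,1\}$. The only cosmetic difference is that you make the $p$-dependent permutation $\sigma_p$ explicit, whereas the paper simply says ``without loss of generality order the $q_i$.''
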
 
\begin{proof}
It suffices to give local proofs of each property, so fix some prime $p$ and order the $q_i$ such that 
\[\nu_p(q_1)\leq \cdots \leq \nu_p(q_k).\]
The first property follows from the fact that, for $1\leq i\leq k$,
\begin{align*}
\nu_p(q_i) 
&= (\nu_p(q_i)-\nu_p(q_{i-1}))+\cdots+(\nu_p(q_2)-\nu_p(q_1))+\nu_p(q_1)\\
&= \sum_{1\leq j\leq i} \nu_p(\{ j,j+1,\ldots,k\})\\
&=\sum_{i\in I}\nu_p(I),
\end{align*}
since $\nu_p(I)=0$ if $I$ is not of the shape $\{j,j+1,\ldots,k\}$ for some $1\leq j\leq k$. 

For the second property, note that if $\min(\nu_p(I),\nu_p(J))>0$ then there must exist some $i,j$ such that $I=\{i,\ldots,k\}$ and $J=\{j,\ldots,k\}$, whence $J\subseteq I$ or $I\subseteq J$ as required. 

Finally, the third property follows from the observation that if $\nu_p(q_i)\in \{0,1\}$ for all $1\leq i\leq k$ then by definition there is exactly one $I$ such that $\nu_p(I)\neq 0$ (namely if $0=\nu_p(q_1)=\cdots \nu_p(q_i)$ and $1=\nu_p(q_{i+1})=\cdots =\nu_p(q_k)$ then $I=\{i+1,\ldots,k\}$). 
\end{proof}

\end{document}